\documentclass[a4paper]{amsart}

\usepackage{graphicx} %
\usepackage{amsmath,amssymb,amsthm,thmtools}
\usepackage{mathtools}
\usepackage{bbm}
\usepackage[shortlabels]{enumitem}
\usepackage{ifthen}
\usepackage{xcolor}
\usepackage{tikz}
\usepackage{pgfplots}
\pgfplotsset{compat=newest}
\usepgfplotslibrary{groupplots}
\usepackage{xstring}
\usetikzlibrary{calc, external}
\tikzset{external/up to date check=md5}
\usepackage{standalone}
\usepackage{float}
\usepackage{subcaption}
\usepackage[ruled,vlined]{algorithm2e}
\usepackage{booktabs}

\usepackage{hyperref}
\hypersetup{
	colorlinks,
	linkcolor={red!50!black},
	citecolor={blue!50!black},
	urlcolor={blue!50!black}
}

\ifpdf
\hypersetup{
  pdftitle={Domain splitting with localized implicit prediction},
  pdfauthor={Tim Buchholz and Roland Maier}
}
\fi

\newcommand{\nobreakbeforeinlinemath}{%
  \ifhmode
    \ifdim\lastskip>0pt
      \unskip
      \nobreakspace
    \fi
  \fi
}

\AddToHook{cmd/(/before}{\nobreakbeforeinlinemath}

\newcommand{\half}{1/2}
\newcommand{\restr}[2]{{%
\left.\kern-\nulldelimiterspace %
#1 %
\vphantom{\big|} %
\right|_{#2} %
}}
\DeclareMathOperator{\supp}{supp}
\DeclareMathOperator{\dom}{dom}
\DeclareMathOperator{\interior}{int}
\DeclareMathOperator{\diam}{diam}

\DeclarePairedDelimiterX\setc[2]{\{}{\}}{\,#1 \;\delimsize\vert\; #2\,}
\DeclarePairedDelimiter\monosetc{\{}{\}}

\newcommand{\mfrac}[2]{\scalebox{0.8}{\(\dfrac{#1}{#2}\)}{}}
\newcommand{\pmat}[1]{\begin{pmatrix} #1 \end{pmatrix}}
\newcommand{\pmatT}[1]{(#1)^\top} %
\newcommand{\grad}{\nabla}
\renewcommand{\~}[1]{\widetilde{#1}}
\newcommand{\dint}[1]{\, \mathrm{d} #1}
\renewcommand{\d}[1]{\partial_{#1}}
\DeclareMathOperator{\Div}{div}
\newcommand{\Th}[1][]{
	\ifthenelse{ \equal{#1}{} }
	{\mathcal{T}_h}
	{\mathcal{T}_h(#1)}
} 
\newcommand{\card}{\mathop{\mathrm{card}}}

\newcommand{\PkK}{\mathbb{P}_k(K)}
\newcommand{\PkTh}[1][]{\mathcal{P}_k(\Th[#1])}

\newcommand{\PkZeroTh}[1][]{\mathcal{P}_{k,0}(\Th[#1])}

\DeclarePairedDelimiterX{\innerprod}[1]{(}{)}{#1}
\NewDocumentCommand{\ip}{s o m}{%
  \IfBooleanTF{#1}
    {%
      \IfNoValueTF{#2}
        {\innerprod*{#3}}
        {\innerprod*{#3}_{#2}}%
    }
    {%
      \IfNoValueTF{#2}
        {\innerprod[\big]{#3}}
        {\innerprod[\big]{#3}_{#2}}%
    }%
}

\newcommand{\abilSymb}{a}
\newcommand{\bbilSymb}{b}

\DeclarePairedDelimiterX{\formargs}[1]{(}{)}{#1}
\NewDocumentCommand{\abil}{s o m}{%
  \IfBooleanTF{#1}
    {%
      \IfNoValueTF{#2}
        {\abilSymb\formargs*{#3}}
        {\abilSymb_{#2}\formargs*{#3}}%
    }
    {%
      \IfNoValueTF{#2}
        {\abilSymb\formargs[\big]{#3}}
        {\abilSymb_{#2}\formargs[\big]{#3}}%
    }%
}
\NewDocumentCommand{\bbil}{s o m}{%
  \IfBooleanTF{#1}
    {%
      \IfNoValueTF{#2}
        {\bbilSymb\formargs*{#3}}
        {\bbilSymb_{#2}\formargs*{#3}}%
    }
    {%
      \IfNoValueTF{#2}
        {\bbilSymb\formargs[\big]{#3}}
        {\bbilSymb_{#2}\formargs[\big]{#3}}%
    }%
}

\DeclarePairedDelimiter{\plainabs}{\lvert}{\rvert}
\DeclarePairedDelimiter{\plainnorm}{\lVert}{\rVert}
\NewDocumentCommand{\abs}{s o O{} m}{%
  \IfBooleanTF{#1}
    {%
      \IfNoValueTF{#2}
        {\plainabs*{#4}}
        {\plainabs*{#4}_{#2}}%
    }
    {%
      \IfNoValueTF{#2}
        {\plainabs[#3]{#4}}
        {\plainabs[#3]{#4}_{#2}}%
    }%
}
\NewDocumentCommand{\norm}{s o O{} m}{%
  \IfBooleanTF{#1}
    {%
      \IfNoValueTF{#2}
        {\plainnorm*{#4}}
        {\plainnorm*{#4}_{#2}}%
    }
    {%
      \IfNoValueTF{#2}
        {\plainnorm[#3]{#4}}
        {\plainnorm[#3]{#4}_{#2}}%
    }%
}
\NewDocumentCommand{\trnormmanual}{m m}{%
  \mathopen{}#1|\!#1|\!#1|#2#1|\!#1|\!#1|\mathclose{}%
}
\NewDocumentCommand{\tnorm}{s o O{} m}{%
  \IfBooleanTF{#1}
    {%
      \IfNoValueTF{#2}
        {\left|\!\left|\!\left|#4\right|\!\right|\!\right|}
        {\left|\!\left|\!\left|#4\right|\!\right|\!\right|_{#2}}%
    }
    {%
      \IfNoValueTF{#2}
        {\trnormmanual{#3}{#4}}
        {\trnormmanual{#3}{#4}_{#2}}%
    }%
}

\newcommand{\fulldomain}{\Omega}
\newcommand{\dfulldomain}{\partial \fulldomain}
\newcommand{\fulldomainc}{\overline{\fulldomain}}

\newcommand{\ov}{\delta}%
\newcommand{\ovp}{\ell_\mathrm{o}} %
\newcommand{\povp}[1][]{%
  \nobreakbeforeinlinemath
	\ifthenelse{ \equal{#1}{} }
	{\ensuremath{\ell_\mathrm{p}}}
	{\ensuremath{\ell_{\mathrm{p},#1}}}
}
\newcommand{\numberSD}{N}
\newcommand{\SD}[1]{\fulldomain_{#1}}

\newcommand{\SDc}[1]{\overline{\fulldomain}_{#1}}

\newcommand{\SDov}[1]{\fulldomain_{#1}^{\ov}}
\newcommand{\dSDov}[1]{\partial \SDov{#1}}
\newcommand{\SDovc}[1]{\overline{\fulldomain}_{#1}^{\ov}}

\newcommand{\sumSD}{\sum_{i=1}^{\numberSD}}
\newcommand{\SDovint}[1]{\Gamma_{#1}^{\ov}}

\newcommand{\LiftStrip}[1]{\mathcal{S}(\SDovint{#1})}

\newcommand{\averaging}{\zeta}
\newcommand{\blifting}[1]{S_{#1}}

\newcommand{\ts}[1][]{
  \nobreakbeforeinlinemath
	\ifthenelse{ \equal{#1}{} }
	{\ensuremath{\tau}}
	{\ensuremath{\frac{\tau}{#1}}}
}
\newcommand{\tss}[1][]{
  \nobreakbeforeinlinemath
	\ifthenelse{ \equal{#1}{} }
	{\ensuremath{\tau^2}}
	{\ensuremath{\frac{\tau^2}{#1}}}
}
\newcommand{\numberTS}{n_T}

\newcommand{\Patch}[2]{P_{#1}(#2)}
\newcommand{\bLayer}[1]{\Sigma_{#1}}
\newcommand{\NodalInt}{\mathcal{I}_h}
\newcommand{\id}{\mathcal{I}}
\newcommand{\cutoff}[1]{\eta_{#1}}

\newcommand{\LTproj}{\Pi_h}
\newcommand{\Rproj}{\mathcal{R}_h}

\newcommand{\solutionVector}{x}
\newcommand{\uComponent}{u}
\newcommand{\vComponent}{v}

\newcommand{\uExakt}[1][]{\uComponent^{#1}}
\newcommand{\vExakt}[1][]{\vComponent^{#1}}

\newcommand{\solDS}[1]{\solutionVector_{\normalfont\text{DS}}^{#1}} 
\newcommand{\uDS}[1]{\uComponent_{\normalfont\text{DS}}^{#1}}
\newcommand{\vDS}[1]{\vComponent_{\normalfont\text{DS}}^{#1}}

\newcommand{\solDSloc}[2]{\solutionVector^{#2}_{#1}}
\newcommand{\uDSloc}[2]{\uComponent_{#1}^{#2}}
\newcommand{\vDSloc}[2]{\vComponent_{#1}^{#2}}

\newcommand{\solCNloc}[1]{\widehat{\solutionVector}_{\normalfont\text{CN}}^{#1}}
\newcommand{\uCNloc}[1]{\widehat{\uComponent}_{\normalfont\text{CN}}^{#1}}
\newcommand{\vCNloc}[1]{\widehat{\vComponent}_{\normalfont\text{CN}}^{#1}}

\newcommand{\solCN}[1]{\solutionVector_{\normalfont\text{CN}}^{#1}}
\newcommand{\uCN}[1]{\uComponent_{\normalfont\text{CN}}^{#1}}
\newcommand{\vCN}[1]{\vComponent_{\normalfont\text{CN}}^{#1}}

\newcommand{\solCNmod}[1]{\widetilde{\solutionVector}_{\normalfont\text{CN}}^{\,#1}}
\newcommand{\uCNmod}[1]{\widetilde{\uComponent}_{\normalfont\text{CN}}^{\,#1}}
\newcommand{\vCNmod}[1]{\widetilde{\vComponent}_{\normalfont\text{CN}}^{\,#1}}

\newcommand{\solLP}[2]{\~{\solutionVector}^{#2}_{\star,#1}}
\newcommand{\uLP}[2]{\widetilde{\uComponent}_{\star,#1}^{#2}}
\newcommand{\vLP}[2]{\widetilde{\vComponent}_{\star,#1}^{#2}}

\newcommand{\Cdata}{C_{\mathrm{data}}}
\newcommand{\Cblift}{c_{S}}
\newcommand{\cavg}{c_{\mathrm{avg}}}
\newcommand{\CLoc}{C_{\tau/h}} %
\newcommand{\Cint}{c_{\mathcal{I}}}
\newcommand{\Cinv}{c_{\mathrm{inv}}}
\newcommand{\CCutoff}{c_{\eta}}

\newtheorem{theorem}{Theorem}[section]
\newtheorem{lemma}[theorem]{Lemma}
\newtheorem*{remark}{Remark}

\newtheorem{definition}[theorem]{Definition}
\newtheorem{assumption}[theorem]{Assumption}

\makeatletter
\renewcommand{\paragraph}{\@startsection{paragraph}{4}{\z@}%
  {1.0ex plus .2ex minus .1ex}%
  {-1em}%
  {\normalfont\normalsize\bfseries}}
\makeatother

\begin{document}
\title[Domain splitting with localized implicit prediction]{Domain splitting with localized implicit prediction for wave equations}
\author{Tim Buchholz}
\author{Roland Maier}
\address{T. Buchholz, R. Maier,\hfill\break
Institute for Applied and Numerical Mathematics,\hfill\break
Karlsruhe Institute of Technology (KIT), \hfill\break
D-76128 Karlsruhe, Germany}
\email{\{tim.buchholz,roland.maier\}@kit.edu}

\subjclass[2020]{65M12, 35L20, 65M55, 65N30, 35L05}
\keywords{Domain decomposition, wave equation, time integration, exponential decay} %

\begin{abstract} 
    We propose and analyze a non-iterative domain decomposition time integrator for the linear acoustic wave equation.
    The method is based on an overlapping decomposition in space and uses a localized implicit prediction to provide artificial boundary data at the interfaces.
    We use conforming finite elements of arbitrary order to discretize in space and allow spatially varying material coefficients.
    We prove an error bound relative to the global Crank--Nicolson approximation that decays exponentially with the size of both the subdomain and prediction overlaps.
    Compared to earlier work, this new method avoids a CFL-type step size restriction and allows the use of finite elements of arbitrary order.
    The analysis combines a discrete Saint-Venant principle with new decay and truncation estimates for the localized prediction.
    Numerical experiments support the analysis and demonstrate stable behavior for all tested time step sizes, accuracy comparable to that of the global Crank--Nicolson approximation, and the potential for efficient parallel execution.
\end{abstract}

\maketitle

\section{Introduction}
\label{dslp:sec:introduction}
We consider the first-order formulation of the linear acoustic wave equation given by 
\begin{equation}
    \label{dslp:eq:waveEqFirstOrderShort}
    \partial_t u = v,
    \qquad
    \partial_t v = B^{-1}\Div(A\grad u) + f
    \quad \text{in } \fulldomain\times(0,T]
\end{equation}
on a bounded polygonal or polyhedral domain \(\fulldomain\subset\mathbb R^d\), \(d\in\{1,2,3\}\), and for a final time \(T>0\).
The material coefficients \(A,B\in L^\infty(\fulldomain)\) are assumed to satisfy
\[
    \alpha \leq A(x) \leq \beta,
    \qquad
    \alpha \leq B(x)\leq \beta
\]
for almost every \(x\in\fulldomain\), with constants \(0<\alpha\leq\beta\)\footnote{Note that the common bounds on \(A\) and \(B\) only simplify the presentation and are not a restriction.}. 
Here, the coefficient \(A\) models the inverse density of the material, while \(B\) is a positive compressibility coefficient. 
The generally space-dependent wave speed is determined by the square root of the ratio \(B^{-1}A\).
The system~\eqref{dslp:eq:waveEqFirstOrderShort} is complemented by the initial conditions~\(u(\cdot,0)=u^0\) and~\(v(\cdot,0)=v^0\) in \(\fulldomain\), and homogeneous Dirichlet boundary conditions~\(u(\cdot,t)|_{\partial\fulldomain}=0\) for \(t\in(0,T]\), which simplify the presentation but impose no real restriction.

After spatial discretization, the choice of a time integrator involves a fundamental tradeoff.
Explicit methods such as the \emph{leapfrog} scheme \cite{Jol03,Chr09} are computationally inexpensive per time step, but their stability requires a CFL-type condition that couples the time step size to the spatial mesh size.
Implicit methods such as the \emph{Crank--Nicolson} scheme \cite{CraN47,Kar11} avoid this restriction, but require the solution of a globally coupled linear system at every time step \cite{DoeHKRSW23}.

A natural approach is therefore to decompose the spatial domain, replace the global implicit problem by several smaller subdomain problems, and combine their solutions to a global approximation.
This is the basic idea of \emph{domain decomposition} (DD) methods.
Many established DD methods are iterative: they repeatedly solve the subdomain problems while updating the coupling conditions at their artificial interfaces; see, e.g., \cite{TosW05,Gan08,GanZ22}.
Here, we instead focus on non-iterative DD methods, which require no additional fixed-point or interface iteration within a time step.

Non-iterative DD time integrators include several distinct approaches.
\emph{Domain decomposition operator splittings}, also known as regionally additive schemes, were introduced in \cite{Vab89} and subsequently analyzed mainly for dissipative and parabolic problems \cite{HanE17,EisH18}.
For hyperbolic systems, \emph{tent pitching} methods provide a local space-time construction, with both mapped \cite{GopMS15,GopSW17,DraGSW22} and unmapped variants \cite{CiaGM24,BonCM26}.
Most closely related to the present work is the \emph{domain splitting method}, introduced for parabolic equations in \cite{BluLR92}.
It combines an overlapping spatial decomposition with temporal extrapolation to predict the artificial interface data and provides the starting point for our method.
A related method was developed in \cite{DawD92} and extended to hyperbolic problems in \cite{DawD94}.

In \cite{BucH25CG}, a domain splitting method for the wave equation was constructed, based on the method of~\cite{BluLR92}, but using a leapfrog-based prediction of the artificial interface data.
Second-order convergence relative to the global Crank--Nicolson approximation was proved under a CFL-type condition in which the admissible time step size scales linearly with the width of the subdomain overlap.
The explicit prediction also restricted the conforming spatial discretization because it required mass lumping.
Consequently, only linear finite elements were considered, as higher-order mass-lumped methods on simplicial meshes would generally require enriched finite element spaces, see, e.g., \cite{GeeMV18}.

In the present work, we replace the explicit predictor with a \emph{localized implicit prediction} inspired by \cite{GalM23}.
The key difference from the approach in \cite{GalM23} lies in the localization mechanism.
While that approach uses a partition of unity to localize the data entering the prediction, we apply a cutoff to the test functions in the variational formulation of the prediction problem.
The resulting \emph{domain splitting scheme with localized implicit prediction} is a non-iterative domain decomposition time integrator.
Both the localized prediction solves and the implicit solves on the overlapping subdomains can be carried out independently for each subdomain and thus in parallel.
Compared with the explicit prediction scheme in \cite{BucH25CG}, the analysis no longer requires a CFL-type step size restriction. 
Instead, we prove that the error relative to the global Crank--Nicolson approximation decays exponentially in both the subdomain and prediction overlap widths, yielding a prescribed algebraic convergence rate under a suitable overlap condition.
Moreover, the new scheme supports standard conforming finite elements of any fixed order \(k\geq1\) without mass lumping, and the analysis allows for spatially varying material coefficients \(A,B\in L^\infty(\fulldomain)\).

The theoretical results combine ideas from \cite{BucH25CG,BucD26,GalM23}, but the localized implicit prediction and the absence of a CFL-type restriction make a novel error analysis necessary.
The proof has three principal ingredients.
First, we adapt the error recursion and averaging framework from \cite{BucH25CG}.
Second, the discrete Saint-Venant principle from \cite{BucD26} controls how errors introduced at the artificial interfaces decay across the subdomain overlap.
Third, inspired by \cite{GalM23}, we develop new decay and truncation estimates for the prediction error generated by the modified localization mechanism.

Decay estimates and localization arguments form the heart of our analysis.
Such arguments go back to the Saint-Venant principle in linear elasticity \cite{StV1856} and later developments for elliptic boundary-value problems \cite{HorK83}.
Related exponential decay and localization phenomena also arise in multiscale methods such as \cite{MalP14,MalP21,MaSD22,AlbMS25,Owh17,OwhS19},
and are used to show the energy decay of the Green's function for linear Schr\"odinger operators \cite{AltHP20}.

The remainder of the paper is organized as follows.
Section~\ref{dslp:sec:preliminaries} introduces the notation and the global discretizations in space and time, while Section~\ref{dslp:sec:method} constructs the domain splitting method with particular emphasis on the localized implicit prediction.
Section~\ref{dslp:sec:main-results} states the convergence result relative to the global Crank--Nicolson approximation and summarizes its three main ingredients.
Sections~\ref{dslp:sec:localizedPrediction}--\ref{dslp:sec:averaging} establish the required ingredients, namely the localized prediction estimate, the discrete Saint-Venant principle, and the averaging stability.
Finally, Section~\ref{dslp:sec:discussion} discusses the overlap condition, while Section~\ref{dslp:sec:numericalExperiments} presents one- and two-dimensional numerical experiments supporting the theory and illustrating the potential for efficient parallel implementation.

\section{Preliminaries}
\label{dslp:sec:preliminaries}
In this section, we introduce the analytical and discrete framework for this paper. 
We first derive a variational formulation of the model problem~\eqref{dslp:eq:waveEqFirstOrderShort} and then discretize it in space using conforming Lagrange finite elements of arbitrary fixed order \(k\geq 1\).
Next, we introduce the required mesh-layer geometry for the subsequent localization arguments. 
Finally, we present the global Crank--Nicolson scheme, which serves as a reference method in our analysis, and establish a stability bound in the discrete energy norm.

\subsection{Spaces, norms, and bilinear forms}

We denote the Lebesgue space of square integrable functions by \(L^2(\fulldomain)\) and write \(\ip*[\fulldomain]{\cdot,\cdot}\) and \(\norm[\fulldomain]{\cdot}\) for its inner product and norm. We mostly work with a version of the inner product, which is weighted by the material coefficient \(B\), given by 
\begin{equation}
    \bbil[\fulldomain]{u,w}
     \coloneq 
     \int_{\fulldomain} B u w \dint{x}, 
\end{equation} 
and denote the associated norm by \(\norm[b,\fulldomain]{\cdot}\).
By \(H^1(\fulldomain)\), we denote the Sobolev space of functions with weak derivatives up to order one in \(L^2(\fulldomain)\) and write \(H_0^1(\fulldomain)\) for the closure of the compactly supported \(C_c^\infty(\fulldomain)\) functions in \(H^1(\fulldomain)\). 
As usual, we denote the respective norm and semi-norm by~\(\norm[H^1(\fulldomain)]{\cdot}\) and \(\abs{\cdot}_{H^1(\fulldomain)}\). 
Motivated by our model problem \eqref{dslp:eq:waveEqFirstOrderShort}, we also introduce the bilinear form
\begin{equation}
    \abil[\fulldomain]{u,w} \coloneq \int_{\fulldomain} A \grad u \cdot \grad w \dint{x},
\end{equation}
and its associated norm \(\norm[a,\fulldomain]{\cdot}\). 
With these definitions, and for sufficiently regular solutions with~\(u(t)\in H_0^1(\fulldomain)\) and \(v(t)\in L^2(\fulldomain)\) for almost every \(t\in(0,T)\), the variational formulation of \eqref{dslp:eq:waveEqFirstOrderShort} reads
\begin{equation}
    \label{dslp:eq:variationalFormulation-continuous}
    \begin{aligned}
         \bbil[\fulldomain]{ \d{t} u, \varphi} &=  \bbil[\fulldomain]{v, \varphi} &&\text{for all } \varphi \in H_0^1(\fulldomain),  \\
         \bbil[\fulldomain]{\d{t} v , \psi} &= -\abil[\fulldomain]{u, \psi} +  \bbil[\fulldomain]{f, \psi}  &&\text{for all } \psi \in H_0^1(\fulldomain)
    \end{aligned}
\end{equation}
almost everywhere in time.
Throughout this work, we will gather the components in a solution vector~\(\pmatT{u \; v}\), whenever possible. 

\subsection{Conforming finite elements}
To discretize the model problem in space, we introduce conforming Lagrange finite elements together with the auxiliary operators and estimates needed in the subsequent analysis. 
Let \(\Th = \Th[\fulldomain]\) be an affine, shape-regular, matching simplicial mesh of \(\fulldomain\), see, e.g., \cite[Definition 8.11]{ErnG21I}, where \(h\) denotes a spatial parameter tending to zero for finer meshes,
typically the largest diameter of all elements.
Throughout, we also assume quasi-uniformity of the mesh\footnote{Assuming quasi-uniformity is not a restriction and only simplifies the presentation to avoid an additional parameter \(h_{\min} = \min_{K\in \Th} \diam K\). In a non quasi-uniform setting, bounds using \(1/h\) must use~\(1/h_{\min}\) instead.}. That is, \(h\) is up to a constant also a lower bound on the diameters of the elements. 
Let~\(\PkK\) denote the set of polynomials in \(d\) variables of degree \(\leq k\) on an element~\(K\in \Th\) for some~\(k\geq 1\). Based on this, we define the finite-dimensional approximation spaces
\begin{equation}
    \begin{aligned}
        \PkTh &\coloneq \setc{v_h \in L^2(\fulldomain)}{\restr{v_h}{K} \in \PkK\;\text{for all } K \in \Th} \cap C^{0}(\fulldomain), \\
        \PkZeroTh &\coloneq  \PkTh \cap H_0^1(\fulldomain).
    \end{aligned}
\end{equation}
For each \(K\in \Th\), we introduce the nodal degrees of freedom
\begin{equation*}
    \mathcal{N}_K \coloneq \monosetc*{\sigma_{K,1},\dots,\sigma_{K,N_k^d}},
\end{equation*}
consisting of the point evaluations \(\sigma_{K,i}(u) = u(a_{K,i})\) at suitable nodal points \(a_{K,i} \in K\) for indices \(i=1,\dots,N_k^d\), as introduced, e.g., in \cite[Section 7.4]{ErnG21I}. We follow the usual convention that mesh cells are closed sets. 
Hence, \((K,\PkK, \mathcal{N}_K)\) is the standard Lagrange finite element of degree \(k\).

A standard ingredient for the error analysis of finite element methods is the inverse inequality. 
\begin{lemma}[Inverse inequality]
	\label{dslp:Lem:InverseEstimate}
    There exists a constant \(\Cinv>0\), depending only on the polynomial degree \(k\), the spatial dimension, and the shape regularity of \(\Th\), such that
    \begin{equation*}
        \abs{\phi_h}_{H^1(K)}^2
        \leq
        \Cinv h^{-2}\norm[K]{\phi_h}^2
    \end{equation*}
    for all \(K\in\Th\) and all \(\phi_h\in\PkTh\).
\end{lemma}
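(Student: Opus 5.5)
The plan is the standard scaling argument: transfer to the reference simplex, use equivalence of norms on the finite-dimensional polynomial space there, and transform back. Fix \(K\in\Th\) and let \(F_K\colon\widehat K\to K\), \(F_K(\widehat x)=J_K\widehat x+b_K\), be the affine map from the reference simplex \(\widehat K\). Since \(\phi_h\in\PkTh\), the restriction \(\restr{\phi_h}{K}\) lies in \(\PkK\), so \(\widehat\phi\coloneq\phi_h\circ F_K\in\mathbb{P}_k(\widehat K)\). Only this local polynomial structure is used; continuity across elements plays no role.

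On \(\mathbb{P}_k(\widehat K)\), which is finite-dimensional, \(\norm[\widehat K]{\cdot}\) is a norm and \(\abs{\cdot}_{H^1(\widehat K)}\) is a continuous seminorm. Equivalence of norms (or compactness of the unit sphere) therefore gives a constant \(\widehat c\), depending only on \(k\) and \(d\), with \(\abs{\widehat\phi}_{H^1(\widehat K)}\le\widehat c\,\norm[\widehat K]{\widehat\phi}\).

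Next, I transform back using the change of variables and the chain rule \(\grad\phi_h\circ F_K=J_K^{-\top}\widehat\grad\widehat\phi\):
\[
\norm[K]{\phi_h}^2=\abs{\det J_K}\,\norm[\widehat K]{\widehat\phi}^2,
\qquad
\abs{\phi_h}_{H^1(K)}^2\le\abs{\det J_K}\,\norm{J_K^{-1}}^2\abs{\widehat\phi}_{H^1(\widehat K)}^2.
\]
Shape regularity gives \(\norm{J_K^{-1}}\le \widehat h/\rho_K\le C/h_K\), where \(\rho_K\) is the inradius of \(K\). Combining the two displays,
\[
\abs{\phi_h}_{H^1(K)}^2\le C^2\widehat c^{\,2}h_K^{-2}\norm[K]{\phi_h}^2 .
\]

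The last step replaces \(h_K\) by \(h\), and this is the only place quasi-uniformity enters: it gives \(h_K\ge c\,h\). Hence the bound holds with \(\Cinv=C^2\widehat c^{\,2}c^{-2}\), which depends only on \(k\), \(d\), and the shape-regularity (and quasi-uniformity) constants. There is no real obstacle. The only point needing care is tracking the Jacobian bound \(\norm{J_K^{-1}}\lesssim h_K^{-1}\) through the shape-regularity constant, and for that one can cite, e.g., \cite{ErnG21I}.
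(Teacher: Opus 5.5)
Your scaling argument is correct and is essentially the standard proof in the references the paper cites instead of writing one out (Ern--Guermond, Lemma~12.1/Example~12.3; Brenner--Scott, Lemma~4.5.3). You are also right that passing from \(h_K\) to the global \(h\) needs quasi-uniformity, so \(\Cinv\) also depends on the quasi-uniformity constant; the paper's statement leaves this dependence implicit and covers it by its standing quasi-uniformity assumption.
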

\begin{proof}
	The proof of this standard inverse inequality can be found, e.g., in \cite[Lemma 12.1, Example 12.3]{ErnG21I} or \cite[Lemma 4.5.3]{BreS08}.
\end{proof}

For a function \(u \in C^0(K)\), we define a local interpolant by 
\begin{equation*}
    \mathcal{I}_K u 
    \coloneq \sum_{i=1}^{N_k^d} \sigma_{K,i}(u) \varphi_{a_{K,i}} 
    = \sum_{i=1}^{N_k^d} u(a_{K,i}) \varphi_{a_{K,i}},
\end{equation*}
where \(\varphi_{a_{K,i}}\) denotes the local shape function associated with the nodal point \(a_{K,i}\).
With these local interpolants, the standard Lagrange nodal interpolation \(\NodalInt u \in \PkTh[]\) of a function~\(u\in C^0(\fulldomainc)\) is defined by 
\begin{equation}
    \label{dslp:eq:DefNodalInt}
    \restr{\bigl(\NodalInt u\bigr)}{K} \coloneq \mathcal{I}_K u, \quad \text{for all }K \in \Th .
\end{equation}
Moreover, we define the weighted \(L^2\)-projection \(\LTproj\colon L^2(\fulldomain) \to \PkTh[]\) for \(u \in L^2(\fulldomain)\) by 
\begin{equation}
    \label{dslp:eq:L2Projection}
     \bbil[\fulldomain]{\LTproj u, w_h} =  \bbil[\fulldomain]{u, w_h} \quad \text{for all } w_h \in \PkTh[].
\end{equation}
Similarly, for \(u \in H^1_0(\fulldomain)\), we define the Ritz projection \(\Rproj u \in \PkZeroTh\) based on the bilinear form~\(\abil*[\fulldomain]{\cdot,\cdot}\) by
\begin{equation}
    \label{dslp:eq:RitzProjection}
    \abil[\fulldomain]{\Rproj u, w_h} = \abil[\fulldomain]{u, w_h} \quad \text{for all } w_h \in \PkZeroTh[].
\end{equation}

We relate the two bilinear forms \(\abil[\fulldomain]{\cdot,\cdot}\) and \(\bbil[\fulldomain]{\cdot,\cdot}\) by the linear operator~\(L_h \colon \PkZeroTh[] \to \PkZeroTh[]\), defined for \(u_h \in \PkZeroTh[]\) by
\begin{equation}
    \label{dslp:eq:DefDiscreteOperatorLh}
     \bbil[\fulldomain]{L_h u_h, w_h} = \abil[\fulldomain]{u_h, w_h} \quad \text{for all } w_h \in \PkZeroTh[].
\end{equation}

\subsection{Discrete variational formulation}
Applying the finite element setting from above to \eqref{dslp:eq:variationalFormulation-continuous} leads to a spatially discrete variational formulation posed in the discrete energy space
\begin{equation}
    X_h = X_h(\fulldomain)
    \coloneq
    \bigl(\PkZeroTh, \norm[a]{\cdot}\bigr)
    \times
    \bigl(\PkTh, \norm[b]{\cdot}\bigr),
\end{equation}
with the associated discrete energy norm  
\begin{equation}
    \norm[X_h]{x_h} = \norm[X_h(\fulldomain)]{x_h} \coloneq \Bigl(\norm[a]{u_h}^2 + \norm[b]{v_h}^2\Bigr)^{\half},
\end{equation}
for a solution vector \(x_h = \pmatT{u_h \; v_h}\). 
Throughout the paper, omitted domain indices for norms always refer to the full domain \(\fulldomain\). For instance, we write  
\(\norm[a]{\cdot}=\norm[a,\fulldomain]{\cdot}\) and
\(\norm[b]{\cdot}=\norm[b,\fulldomain]{\cdot}\).

To quantify the size of the data, we work with a constant~\(\Cdata = \Cdata(\uExakt[0],\vExakt[0],f,T)\) given in the definition below. 
\begin{definition}[Well-prepared data]
    \label{dslp:def:data-well-prepared}
    We call the initial data \(u^0 \in H_0^1(\fulldomain)\), \(v^0 \in L^2(\fulldomain)\), and the right-hand side~\(f\in C([0,T]; L^2(\fulldomain))\) \emph{well-prepared} if 
    \begin{equation}
        \label{dslp:eq:def-Cdata}
        \Cdata \coloneq \norm[a]{\uExakt[0]} + \norm[b]{\vExakt[0]} + T \max_{j=0,\dots,\numberTS} \norm[b]{f(t_j)}
    \end{equation}
    is finite and independent of \(h\) and \(\tau\). 
\end{definition}
Let \(\uExakt[0], \vExakt[0]\), and \(f\) be well-prepared according to Definition~\ref{dslp:def:data-well-prepared}. 
We project the initial data to~\(X_h\) using the projections \(\LTproj\) and \(\Rproj\) from \eqref{dslp:eq:L2Projection} and \eqref{dslp:eq:RitzProjection}, respectively, and define the discrete forcing by
\begin{equation}
    \uCN{0} = \Rproj \uExakt[0],
    \qquad
    \vCN{0} = \LTproj \vExakt[0],
    \qquad
    f_h(t) = \LTproj f(t)
    \quad\text{for }t\in[0,T].
\end{equation}
For \(j=0,\dots,\numberTS\), we abbreviate \(f_h^j=f_h(t_j)\). 

In the discrete energy space \(X_h(\fulldomain)\), the semi-discrete counterpart of \eqref{dslp:eq:variationalFormulation-continuous} reads 
\begin{equation}
    \label{dslp:eq:variationalFormulation-discrete}
    \partial_t \pmat{u_h \\ v_h} = \pmat{0 & \id \\ -L_h &0} \pmat{u_h \\ v_h} + \pmat{0 \\ f_h(t)},
\end{equation}
where \(\id\) denotes the identity operator.

\subsection{Patches and mesh layers}
The localization arguments in the later error analysis require local norms and a notion of distance expressed in layers of mesh elements.
Thus, we also use the bilinear forms and norms from above on subdomains~\(D\subset \fulldomain\), which will always match the underlying mesh \(\Th\). That is, there exists a subset \(D_h \subset \Th\) with
\begin{equation}
    \label{dslp:eq:D-matching-Th}
    D = \dom D_h \coloneq \interior\bigl( \cup_{K \in D_h} K \bigr).
\end{equation}
For example, \(X_h(D)\) denotes the discrete energy space on \(D\), and we write 
\begin{equation*}
    \norm[X_h(D)]{x_h} = \Bigl( \norm[a,D]{u_h}^2 + \norm[b,D]{v_h}^2\Bigr)^{\half}
\end{equation*} 
for the respective norm.

For an arbitrary subset \(\omega\subset\fulldomain\), we define \(\Patch{0}{\omega}\) as the smallest subset of \(\Th\) whose union contains~\(\omega\). 
For \(\ell\geq 1\), we define the \(\ell\)-th order patch recursively by
\begin{equation}
    \label{dslp:eq:defPatches}
    \Patch{\ell}{\omega}
    \coloneqq
    \setc{K \in \Th}{\exists\, \widehat{K} \in \Patch{\ell -1}{\omega}
    \text{ such that } K \cap \widehat{K} \neq \emptyset}.
\end{equation}
We illustrate this definition in Figure~\ref{dslp:Fig:patches}.
The \(\ell\)-th boundary layer of cells is denoted by 
\begin{equation}\label{dslp:eq:BoundaryLayer}
    \bLayer{\ell} \coloneqq \Patch{\ell}{\omega} \setminus \Patch{\ell - 1}{\omega}.
\end{equation}
For notational simplicity, we identify cell sets such as \(\Patch{\ell}{\omega}\) and \(\bLayer{\ell}\) with their associated domains, i.e., with the interiors of the unions of their cells.

\begin{figure*}[t]
        \centering
        \includegraphics{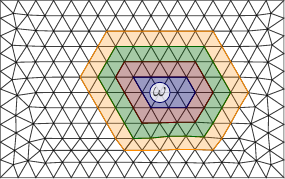}
        \caption{Patches \(\Patch{\ell}{\omega}\) around the subdomain \(\omega \subset \fulldomain\), for \(\ell = 1\) (red), \(\ell = 2\) (green), and \(\ell = 3\) (orange).}
        \label{dslp:Fig:patches}
\end{figure*}

\subsection{Crank--Nicolson time integration scheme}
\label{dslp:subsec:CN}
The classical implicit second-order Crank--Nicolson (CN) scheme serves as the global reference solution in our analysis. We record equivalent formulations that are advantageous for the localized prediction and derive a stability bound in the discrete energy space \(X_h(\fulldomain)\).

To discretize in time, let \(\ts>0\) be the time step size and assume, for simplicity, that \(T=\numberTS\ts\) for some \(\numberTS\in\mathbb N\). 
If necessary, a shorter final time step can be used instead.
We write \(t_n=n\ts\) for \(n=0,\dots,\numberTS\).
We denote the CN approximation at time \(t_n\) by \(\solCN{n} = \pmatT{\uCN{n} \; \vCN{n}} \in X_h\) for~\(n=0,\dots,\numberTS\). The \(n\)-th step of the CN scheme is written in componentwise operator form on~\((\PkTh,\norm[b]{\cdot})\) as 
\begin{subequations}
    \label{dslp:eq:CNfirstOrderFormulation}
    \begin{align}
        \uCN{n} &= \uCN{n-1} + \ts[2] \left( \vCN{n} + \vCN{n-1} \right), \label{dslp:eq:CNfirstOrderFormulation-a}\\
        \vCN{n} &= \vCN{n-1} - \ts[2] L_h \left(  \uCN{n} + \uCN{n-1}\right) + \ts\overline{f}_h^n ,\label{dslp:eq:CNfirstOrderFormulation-b}
    \end{align}
\end{subequations}
with \(\overline{f}_h^n = \frac12 (f_h^n + f_h^{n-1})\).
One way to implement the scheme is to insert
\eqref{dslp:eq:CNfirstOrderFormulation-a} into~\eqref{dslp:eq:CNfirstOrderFormulation-b} to eliminate~\(\vCN{n}\). 
In this way, we may rewrite~\eqref{dslp:eq:CNfirstOrderFormulation} as an operator equation on \((\PkZeroTh,\norm[b]{\cdot})\) for the increment of the first component \(\delta \uCN{n} = \uCN{n} - \uCN{n-1} \in \PkZeroTh\), which yields
\begin{equation}
    \label{dslp:eq:CN-increment-formulation}
    \bigl(\id +\frac{\tau^2}{4} L_h\bigr) \delta \uCN{n}
    =
    \tau\vCN{n-1}
    -\frac{\tau^2}{2} L_h \uCN{n-1}
    +\frac{\tau^2}{2} \overline{f}_h^n.
\end{equation} 
After computing \(\delta \uCN{n}\), we obtain the next iterate via \(\uCN{n} = \uCN{n-1} + \delta \uCN{n}\) and update \(\vCN{n}\) by rearranging~\eqref{dslp:eq:CNfirstOrderFormulation-a}.

Yet another way to rewrite \eqref{dslp:eq:CNfirstOrderFormulation} can be found in \cite[Section 11.1]{DoeHKRSW23}. 
With 
\begin{equation}
    \label{dslp:eq:defCNoperator-R}
    R_{-} = \pmat{\id &-\ts[2]\id \\ \ts[2] L_h & \id}, \quad R_{+} = \pmat{\id &\ts[2]\id \\ -\ts[2] L_h & \id}, \quad \text{and} \quad R = R_{-}^{-1} R_{+},
\end{equation}
the system \eqref{dslp:eq:CNfirstOrderFormulation} can be rewritten in matrix form as
\begin{equation}
    \label{dslp:eq:CN-Rplusminus}
    R_{-} \pmat{\uCN{n} \\ \vCN{n}} = R_{+} \pmat{\uCN{n-1} \\ \vCN{n-1}} + \ts \pmat{0 \\ \overline{f}_h^n}
\end{equation}
for \(n=1,\dots,\numberTS\). This formulation is especially advantageous when investigating the stability of the CN scheme, since 
\begin{equation}
    \label{dslp:eq:CNoperator-R-NormEstimates}
    \norm[X_h \leftarrow X_h][\big]{R_{-}^{-1}}\leq 1 \quad \text{ and } \quad \norm[X_h \leftarrow X_h][\big]{R} = 1, 
\end{equation}
see, e.g., \cite[Section 11.1]{DoeHKRSW23}. Iterating this representation gives the following standard stability estimate. 
\begin{lemma}[CN stability]
    \label{dslp:lem:CNstability}
    Let the data be well-prepared according to Definition~\ref{dslp:def:data-well-prepared}. Then, 
    \begin{equation}
        \label{dslp:lem:CNstability-eq}
        \norm[X_h][\big]{\solCN{n-1}} + \ts \norm[b]{\overline{f}_h^n} \leq \Cdata
    \end{equation}
    for every \(n = 1, \dots, \numberTS\), with \(\Cdata\) defined in \eqref{dslp:eq:def-Cdata}.
\end{lemma}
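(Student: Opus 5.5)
We derive the estimate by iterating the matrix form \eqref{dslp:eq:CN-Rplusminus} and using the operator bounds \eqref{dslp:eq:CNoperator-R-NormEstimates}. Applying \(R_{-}^{-1}\) to \eqref{dslp:eq:CN-Rplusminus} gives
\begin{equation*}
    \solCN{m} = R\,\solCN{m-1} + \ts R_{-}^{-1}\pmat{0 \\ \overline{f}_h^m}, \qquad m=1,\dots,\numberTS .
\end{equation*}
By induction this yields the discrete variation-of-constants formula
\begin{equation*}
    \solCN{m} = R^m \solCN{0} + \ts\sum_{j=1}^{m} R^{m-j} R_{-}^{-1}\pmat{0 \\ \overline{f}_h^j}.
\end{equation*}

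Taking the \(X_h\)-norm and using \(\norm[X_h \leftarrow X_h]{R}=1\) and \(\norm[X_h \leftarrow X_h]{R_{-}^{-1}}\le 1\), we get
\begin{equation*}
    \norm[X_h]{\solCN{m}} \le \norm[X_h]{\solCN{0}} + \ts\sum_{j=1}^{m}\norm[b]{\overline{f}_h^j}.
\end{equation*}
Here we use that the \(X_h\)-norm of \(\pmatT{0 \; g}\) is just \(\norm[b]{g}\).

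A technical point is that \(R_{-}^{-1}\) must act on \(\pmatT{0 \; \overline{f}_h^j}\), whose second component lies in \(\PkTh\) while the first lies in \(\PkZeroTh\). We take the norm bound \eqref{dslp:eq:CNoperator-R-NormEstimates} as given on this space. For \(\overline{f}_h^j\) this is harmless, since it enters only through \(\bbil{\cdot,w_h}\) with \(w_h\in\PkZeroTh\).

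Next we bound the data. Since \(\Rproj\) is the \(a\)-orthogonal projection, \(\norm[a]{\Rproj \uExakt[0]} \le \norm[a]{\uExakt[0]}\). Since \(\LTproj\) is the \(b\)-orthogonal projection, \(\norm[b]{\LTproj \vExakt[0]}\le \norm[b]{\vExakt[0]}\) and \(\norm[b]{f_h^j}\le\norm[b]{f(t_j)}\). Hence
\begin{equation*}
    \norm[X_h]{\solCN{0}} \le \norm[a]{\uExakt[0]}+\norm[b]{\vExakt[0]},
    \qquad
    \norm[b]{\overline{f}_h^j}\le \max_{i}\norm[b]{f(t_i)}.
\end{equation*}

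Finally we combine these bounds with \(m=n-1\) and add \(\ts\norm[b]{\overline{f}_h^n}\). This gives
\begin{equation*}
    \norm[X_h]{\solCN{n-1}} + \ts\norm[b]{\overline{f}_h^n} \le \norm[a]{\uExakt[0]}+\norm[b]{\vExakt[0]} + n\ts \max_i\norm[b]{f(t_i)}.
\end{equation*}
Since \(n\ts\le \numberTS\ts = T\), the right-hand side is at most \(\Cdata\).

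No step is a real obstacle: the argument is a direct iteration once \eqref{dslp:eq:CNoperator-R-NormEstimates} is granted. The only care needed is the index bookkeeping: the sum up to \(n-1\) plus the extra \(n\)-th forcing term uses at most \(n\le \numberTS\) forcing terms, which fits exactly within \(T\).
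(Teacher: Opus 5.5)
Your proof is correct and follows the paper's argument essentially step by step. You expand \(\solCN{n-1}\) by the discrete variation-of-constants formula and use \(\norm[X_h \leftarrow X_h]{R}=1\) and \(\norm[X_h \leftarrow X_h]{R_{-}^{-1}}\le 1\); you then absorb the extra term \(\tau\norm[b]{\overline{f}_h^n}\) into a sum of at most \(n\le\numberTS\) forcing terms, and finish with the norm stability of the orthogonal projections \(\Rproj\) and \(\LTproj\).
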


\begin{proof}
    We use \eqref{dslp:eq:CN-Rplusminus} to expand 
    \(\solCN{n-1} = R^{n-1} \solCN{0}+ \ts  \sum_{j=1}^{n-1} R^{n-1-j} R_{-}^{-1} \pmatT{0  \; \;  \overline{f}_h^j}. \)
	With \eqref{dslp:eq:CNoperator-R-NormEstimates}, we derive 
	\begin{equation}
		\label{dslp:eq:CNstability}
		\norm[X_h][\big]{\solCN{n-1}} \leq \norm[X_h][\big]{\solCN{0}} + \ts \sum_{j=1}^{n-1} \norm[b][\big]{\overline{f}_h^j}.
	\end{equation}
    By \eqref{dslp:eq:CNstability} and the definition of \(\uCN{0}, \vCN{0}\),  and \(\overline{f}_h^j\) for \(j=1,\dots,\numberTS\), we get 
    \begin{align*}
        \norm[X_h][\big]{\solCN{n-1}} + \ts \norm[b][\big]{\overline{f}_h^n} 
        &\leq \norm[X_h][\big]{\solCN{0}}
        + \ts \sum_{j=1}^{n} \norm[b][\big]{\overline{f}_h^j} \\
        &\leq \norm[a]{\Rproj \uExakt[0]} + \norm[b]{\LTproj \vExakt[0]}  
        + T  \max\limits_{j=0,\dots,n} \norm[b][\big]{\LTproj f(t_j)} \\ 
        &\leq \norm[a]{\uExakt[0]} + \norm[b]{\vExakt[0]} 
        + T  \max\limits_{j=0,\dots,n} \norm[b][\big]{f(t_j)}. 
    \end{align*}
    The claim follows directly by the definition of \(\Cdata\), see \eqref{dslp:eq:def-Cdata}.
\end{proof}

\section{Domain splitting with localized implicit prediction}
\label{dslp:sec:method}

In this section, we construct a non-iterative domain decomposition time integrator in the spirit
of \cite{BucH25CG}, but replace the mass lumping-dependent leapfrog prediction
by a localized implicit CN prediction inspired by the localized implicit time stepping scheme from \cite{GalM23}.

The method is based on an overlapping domain decomposition. 
On each overlapping subdomain, we perform one local CN step. 
Before, we predict the artificial boundary values required on the subdomain interfaces.
Each prediction is computed on a local strip consisting of only a few mesh layers around the corresponding interface.
The prediction and the subdomain CN solves are local operations and can be parallelized over the subdomains. 
The final coupling step then restores conformity by a nodal averaging procedure.
Compared with the scheme from \cite{BucH25CG}, replacing the explicit prediction with a localized implicit one means that the analysis no longer requires a CFL-type step size restriction and permits standard conforming finite elements of any fixed order~\(k\geq 1\).

The section is organized as follows. 
We first introduce the overlapping decomposition and the boundary liftings used to handle artificial interface data. 
We then describe the localized implicit prediction and the local CN subdomain solves. 
Finally, we define the averaging step and summarize the full time stepping algorithm, fixing the notation used in the later error analysis.

\paragraph{Overlapping domain decomposition}
Let
\[
    \overline{\fulldomain}
    =
    \bigcup_{i=1}^{\numberSD}\overline{\SD{i}}
\]
be a non-overlapping decomposition of~\(\fulldomain\). For an overlap parameter \(\ovp \in \mathbb{N}\), we
denote by \(\SDov{i}\) the subdomain obtained by extending \(\SD{i}\) by
\(\ovp\) layers of mesh elements, i.e., 
\begin{equation*}
    \SDov{i} = \dom \Patch{\ovp}{\Th[\SD{i}]},
\end{equation*}
with the definition of patches from \eqref{dslp:eq:defPatches}, see also Figure~\ref{dslp:Fig:OverlappingDD}. The physical width of the overlap is denoted by \(\ov\sim h \ovp\). The interface of the overlapping subdomain is given by
\[
    \SDovint{i} \coloneqq \dSDov{i}\cap \fulldomain .
\]

The localized implicit prediction step uses two additional overlap parameters~\(\povp[1],\povp[2] \in\mathbb N\) with the minimal condition \(\povp[1]+\povp[2]>2\) for the method to be well-defined. The parameter~\(\povp[1]\) determines how far information is collected around the interface \(\SDovint{i}\) when forming the localized right-hand side. 
The parameter \(\povp[2]\) determines the size of the prediction domain, so that the artificial boundary conditions of the prediction problem are sufficiently far away from the interface. 
 
\begin{figure}[t]
    \centering
    \includegraphics{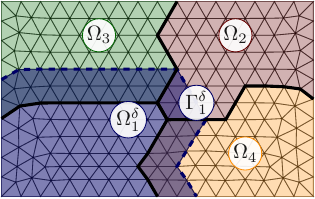}
    \hspace*{1cm}
    \includegraphics{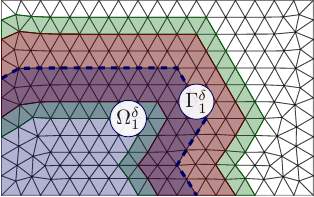}
    \caption{Left: Decomposition into overlapping subdomains \(\SDov{i}\), \(i\in \monosetc{1,\dots,\numberSD}\). Starting from non-overlapping subdomains \(\SD{i}\), we extend each subdomain by \(\ovp\) mesh layers. Here, \(\ovp=2\) and \(\numberSD=4\). Right: Overlap around the prediction interface \(\SDovint{i}\) for \(\povp[1] = 2\) and \(\povp[2] = 1\).}
    \label{dslp:Fig:OverlappingDD}
\end{figure}

\paragraph{Boundary liftings}
Before turning to the description of the non-iterative time integrator, we comment on the lifting property used to handle inhomogeneous interface data in the local subdomain problems.  
The analysis requires that traces prescribed on the artificial boundary \(\SDovint{i}\) can be extended into \(\SDov{i}\) without affecting the physical boundary and with support restricted to a fixed strip near the interface. 
This allows boundary contributions to be estimated by local norms near~\(\SDovint{i}\), uniformly with respect to the global domain.

\begin{assumption}[Stable boundary liftings]
	\label{dslp:ass:boundaryLifting}
	 	For each interface \(\SDovint{i}\), \(i\in\monosetc{1,\dots,\numberSD}\), there exists a linear lifting operator
	\begin{equation*}
		\blifting{i}: \restr{\PkTh}{\SDovint{i}} \to \PkTh[\SDov{i}], 
	\end{equation*}
	such that for every trace datum \(r_i \in \restr{\PkTh}{\SDovint{i}}\),
	\begin{equation*}
		\restr{\blifting{i} r_i}{\SDovint{i}} = r_i, 
		\quad
		\restr{\blifting{i} r_i}{\dSDov{i} \cap \dfulldomain} = 0,
		\quad 
		\supp \blifting{i} r_i \subset \LiftStrip{i} \subset \SDov{i}, 
	\end{equation*}
	where \(\LiftStrip{i}\) is a fixed strip around \(\SDovint{i}\), and 
	\begin{equation}
		\label{dslp:eq:BoundaryLiftingStability}
		\tnorm[\SDov{i}]{\blifting{i} r_i} \leq \Cblift h^{-1/2} \tnorm[\LiftStrip{i}]{r_h}
		\quad
		\text{for all } r_h\in\PkTh[\SDov{i}]
		\text{ with }
		\restr{r_h}{\SDovint{i}}=r_i.
	\end{equation}
    Here, for \(D \subseteq \fulldomain\), the \(\tnorm[D]{\cdot}\)-norm is given by 
    \begin{equation}
        \label{dslp:eq:tnorm}
        \tnorm[D]{u} \coloneqq \Bigl( \norm[a,D]{u}^2 + \frac{4}{\tss}\norm[b,D]{u}^2 \Bigr)^{\half}.
    \end{equation}
    For \(D=\fulldomain\), we again omit the domain index and simply write \(\tnorm{\cdot}\).
\end{assumption}
\begin{remark}[Existence of appropriate liftings]
	Assumption~\ref{dslp:ass:boundaryLifting} is satisfied by standard discrete trace lifting constructions. For example, one may first construct a harmonic lifting in a tubular strip of width \(\varepsilon \in \mathcal{O}(h)\), extend it by zero, and apply Scott--Zhang quasi-interpolation \cite{ScoZha90}, as done in \cite[Appendix~A]{BucD26}.
	Such a construction gives a discrete lifting supported in the strip around \(\SDovint{i}\) and is stable with respect to the standard \(H^{1/2}_{00}(\SDovint{i})\)-trace norm\footnote{The space \(H^{1/2}_{00}(\SDovint{i})\) contains all the functions in \(H^{1/2}(\SDovint{i})\) whose extension by zero has a stable \(H^{1/2}(\partial\SDov{i})\)-trace, see also \cite{LionsMagenes72}.}. To obtain~\eqref{dslp:eq:BoundaryLiftingStability}, one combines this with a local trace estimate bounding the trace norm of 
    \begin{equation*}
        r_i=r_h|_{\SDovint{i}}
    \end{equation*}
    by a local \(H^1\)-norm of \(r_h\) on~\(\LiftStrip{i}\), which is controlled by the \(\tnorm{\cdot}\)-norm on the discrete space. 
    In that case, \(\Cblift\) depends on the shape regularity of the mesh. 
\end{remark}

\paragraph{Description of the method}
For the mathematical description and the subsequent error analysis, it is
convenient to write the method in terms of a global approximation
\[
    \solDS{n}
    =
    \pmat{\uDS{n}\\ \vDS{n}}
\]
at each time step \(t_n = n \tau\) for \(n=0,\dots, \numberTS\).
Note that \(\solDS{n}\) is only a theoretical global object used for the analysis; computationally, it is represented by local restrictions.

With this convention, one step of the method consists of the following three
stages.
\begin{enumerate}[1)]
    \item \textbf{Localized implicit prediction.}
    For each artificial interface \(\SDovint{i}\), a local implicit
    prediction~\(\solLP{i}{n}\) is computed on a strip around
    \(\SDovint{i}\).  Only its displacement component \(\uLP{i}{n}\) is used
    as Dirichlet data for the following subdomain solve.

    \item \textbf{CN solves on overlapping subdomains.}
    On every \(\SDov{i}\), one CN step is performed with initial
    value \(\restr{\solDS{n-1}}{\SDov{i}}\), with homogeneous boundary data on
    \(\dSDov{i}\cap\dfulldomain\), and with the predicted boundary value
    \(\restr{\uLP{i}{n}}{\SDovint{i}}\) on the artificial boundary.  The
    result is denoted by
    \[
        \solDSloc{i}{n}
        =
        \pmat{\uDSloc{i}{n}\\ \vDSloc{i}{n}}.
    \]

    \item \textbf{Averaging.}
    The local solutions are restricted to the non-overlapping subdomains~\(\SDc{i}\).  At nodes shared by several \(\SDc{i}\), their values
    are averaged, which gives the new global approximation
    \(\solDS{n}\). 
\end{enumerate}
The method is non-iterative because each time step uses exactly one prediction,
one set of independent subdomain solves, and one averaging operation.

\subsection{Localized implicit prediction}
\label{dslp:subsec:method:localizedPrediction}

The localized implicit prediction is the new component compared with the domain splitting scheme from \cite{BucH25CG}.
It is built only from data near the interface~\(\SDovint{i}\).
We formulate the localized prediction based on the Crank--Nicolson formulation for the displacement increment similar to \eqref{dslp:eq:CN-increment-formulation}.

For each overlapping interface~\(\SDovint{i}\), the localized implicit prediction approximates the displacement increment produced by a global Crank--Nicolson step \eqref{dslp:eq:CN-increment-formulation}, but using only data and computations from a neighborhood of that interface. 
The construction involves two localization steps:
\begin{enumerate}
    \item A cutoff restricts the right-hand side of the displacement increment problem.
    \item We truncate the domain in which the increment is calculated. 
\end{enumerate}
This localization is done for each \(\SDovint{i}\) individually to obtain the respective boundary data on the interface.
For a fixed interface \(\SDovint{i}\), we take a
cutoff function 
\begin{equation*}
    \chi_i\in\PkTh \quad \text{with} \quad 0\leq \chi_i\leq 1, \quad \text{such that } \chi_i\equiv1 \text{ on the lifting strip \(\LiftStrip{i}\)}
\end{equation*}
from Assumption~\ref{dslp:ass:boundaryLifting}.  
The support of~\(1-\chi_i\) is separated from~\(\LiftStrip{i}\) by~\(\povp[1]\) layers, and there exists a bound \(\norm[L^\infty(\fulldomain)]{\grad\chi_i} \leq c_i h^{-1}\) for a constant \(c_i>0\) only depending on the spatial dimension and the shape regularity of the mesh. We illustrate the choice of the cutoff \(\chi_i\) in Figure~\ref{dslp:Fig:cutoff-chi-i}.

For notational convenience, we set
\[
    D_i^{\mathrm{p}} \coloneqq \dom\Patch{\povp[2]}{\supp\chi_i}.
\]
With
\[
    \mathcal K_{D_i^{\mathrm{p}}}(z_h,w_h)
    \coloneqq
    \abil[D_i^{\mathrm{p}}]{z_h,w_h}
    +
    \frac{4}{\tau^2}\bbil[D_i^{\mathrm{p}}]{z_h,w_h},
\] 
the locally computed
increment \(\widehat{\delta}\widetilde u_{\chi_i}^n\in\PkZeroTh[D_i^{\mathrm{p}}]\) is defined by
\begin{equation}
    \label{dslp:eq:method:localizedPredictionIncrement}
    \begin{aligned}
    \tss[4]\mathcal K_{D_i^{\mathrm{p}}}(\widehat{\delta}\widetilde u_{\chi_i}^n,w_h)
    = \; &
    \tau\bbil[D_i^{\mathrm{p}}]{\vDS{n-1},\NodalInt(\chi_i w_h)} -\frac{\tau^2}{2}\abil[D_i^{\mathrm{p}}]{\uDS{n-1},\NodalInt(\chi_i w_h)}
     \\ 
    &+ \frac{\tau^2}{2}\bbil[D_i^{\mathrm{p}}]{\overline f_h^n,\NodalInt(\chi_i w_h)},
    \end{aligned}
\end{equation}
for all \(w_h\in\PkZeroTh[D_i^{\mathrm{p}}]\). On the boundary of \(D_i^{\mathrm{p}}\), we impose homogeneous boundary
conditions on the increment. Thus, after extending \(\widehat{\delta}\widetilde u_{\chi_i}^n\) by zero
outside \(D_i^{\mathrm{p}}\), the predicted displacement is
\begin{equation}
    \label{dslp:eq:method:predictedDisplacement}
    \uLP{i}{n}
    \coloneqq
    \uDS{n-1}+\widehat{\delta}\widetilde u_{\chi_i}^n .
\end{equation}

Localizing the CN increment by applying the cutoff \(\chi_i\) to the test
function is a new technique compared to  \cite{GalM23}. 
Note, however, that on the full domain \(\fulldomain\), the right-hand sides corresponding to \(\chi_i\) and \(1-\chi_i\) still add up to the original right-hand side of the global CN increment formulation. 
Thus, this localization could also be used in a superposition argument similar to \cite{GalM23}.
We further restrict the computation to the enlarged domain \(D_i^{\mathrm{p}}\) and impose homogeneous boundary conditions on the increment.
Since \(\widehat{\delta}\widetilde u_{\chi_i}^n \in \PkZeroTh[D_i^{\mathrm{p}}]\), the prediction \(\uLP{i}{n} = \uDS{n-1} + \widehat{\delta}\widetilde u_{\chi_i}^n\) coincides with~\(\uDS{n-1}\) on the boundary of \(D_i^{\mathrm{p}}\).
The locally computed increment can therefore be extended by zero beyond~\(D_i^{\mathrm{p}}\) without affecting conformity.
The influence of the artificial boundary condition decays across the~\(\povp[2]\) layers, which motivates this localization.
These two steps, namely splitting the right-hand side by cutoffs for the test functions and then exploiting the decay away from the boundary, are reflected in the later error analysis.

\begin{figure*}[t]
    \centering
    \includegraphics{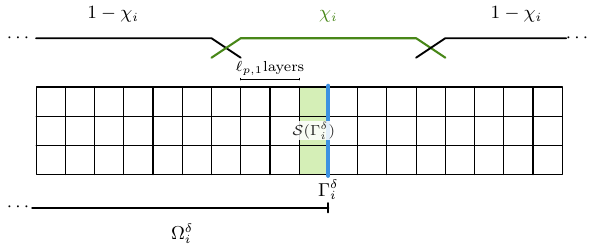}
    \caption{Sketch of the geometric requirements on the cutoff function \(\chi_i\) for a regular rectangular mesh. Here, the lifting strip \(\LiftStrip{i}\) is contained in the first layer of cells next to the interface and \(\povp[1]=2\).} 
    \label{dslp:Fig:cutoff-chi-i}
\end{figure*}

The velocity component \(\vLP{i}{n}\) can be reconstructed from the
CN relation if needed, but the CN step on the subdomains only uses
\(\restr{\uLP{i}{n}}{\SDovint{i}}\).

\begin{remark}[Realization of localized right-hand sides]
    The localized right-hand side in \eqref{dslp:eq:method:localizedPredictionIncrement} does not require a separate assembly procedure. 
    Since \(\chi_i\) and \(w_h\) are finite element functions, nodal interpolation of their product is obtained by pointwise multiplication of their nodal values.
    Hence, one may first assemble the CN increment right-hand side on the full prediction domain \(D_i^{\mathrm{p}}\) and then multiply the
    resulting nodal entries by the respective values of the cutoff function. 
\end{remark}

\subsection{Subdomain CN solves}
With the localized prediction that provides the artificial boundary data, the
remaining computation is a standard CN step on each overlapping subdomain.
For each \(i\in\monosetc{1,\dots,\numberSD}\), we therefore compute
\(\solDSloc{i}{n}\) on \(\SDov{i}\) based on \(\solDS{n-1}\) as the previous approximation and with boundary data prescribed by the
prediction on \(\SDovint{i}\).
That is, \(\solDSloc{i}{n}\) is the result of one step of \eqref{dslp:eq:CNfirstOrderFormulation} on \(\SDov{i}\) with
\begin{equation*}
    \restr{\uDSloc{i}{n}}{\SDovint{i}}
    =
    \restr{\uLP{i}{n}}{\SDovint{i}},
    \qquad
    \restr{\uDSloc{i}{n}}{\dSDov{i}\cap\dfulldomain}
    =
    0.
\end{equation*}
Equivalently, the inhomogeneous trace can be incorporated by the lifting
operator from Assumption~\ref{dslp:ass:boundaryLifting}.  Since the local
problems are posed on the overlapping domains \(\SDov{i}\), all linear systems
are smaller than the global CN system and can be solved in
parallel.  

\subsection{Averaging}
To combine the local functions \(\solDSloc{i}{n}\) from the last step, we first restrict them to the non-overlapping subdomains. 
However, summing these restrictions yields a broken function that is generally not conforming. 
To restore conformity, we average all available nodal values at nodes shared by several subdomains. 
The following definition makes this procedure precise.

Adopting the notation in \cite[Section~19.1]{ErnG21I}, we denote by \(\mathcal{A}_h\) the set of connectivity classes of the Lagrangian nodes of the mesh. 
That is, each class \(a\in\mathcal{A}_h\) is represented by tuples \((K,j)\) that correspond to the same global degree of freedom.
We denote the corresponding global shape function by \(\varphi_a\).

The averaging operator \(\averaging\) maps local functions on the overlapping
subdomains to a global finite element function by its nodal values.  
For local functions \(u_i\) defined on \(\SDov{i}\), for \(i\in\monosetc{1,\dots,\numberSD}\), let \( u_{\fulldomain}^{\#} \coloneqq \sumSD \restr{u_i}{\SD{i}} \) be the corresponding (not necessarily continuous) global function.
We set
\begin{equation}
    \label{dslp:eq:method:averaging}
    \begin{aligned}
         \averaging(\{u_i\}_{i=1}^{\numberSD})
        &\coloneq
        \sum_{a \in \mathcal{A}_h} \frac{1}{\card{(a)}} \sum_{(K, j) \in a} \sigma_{K,j}\Bigl( \restr{u_{\fulldomain}^{\#}}{K}\Bigr) \varphi_a \\
        &=
        \sum_{a \in \mathcal{A}_h} \frac{1}{\card{(a)}} \sum_{(K, j) \in a} \Bigl( \restr{u_{\fulldomain}^{\#}}{K}\Bigr)(a_{K,j}) \; \varphi_a
    \end{aligned}
\end{equation}
    In fact, this definition just encodes the averaging operator \(J_h^\mathrm{av}\) from \cite[Section 22.2]{ErnG21I} applied to the broken polynomial~\(u_{\fulldomain}^{\#}\) and coincides with the averaging in \cite[Section~3.3]{BucH25CG}.
The definition is applied componentwise to solution vectors.  Thus,
\begin{equation}
    \label{dslp:eq:method:globalDSupdate}
    \solDS{n}
    =
    \averaging\bigl(
        \{\restr{\solDSloc{i}{n}}{\SDov{i}}\}_{i=1}^{\numberSD}
    \bigr).
\end{equation}
In particular, if a finite element function is already globally continuous,
then restricting it to the overlapping subdomains and applying
\(\averaging\) leaves it unchanged.

\begin{remark}[Distributed update]
Although \eqref{dslp:eq:method:globalDSupdate} is written in global form, it is only a compact notation for a distributed update.
In practice, each subdomain keeps the values on its non-overlapping part \(\SD{i}\).
Values needed only in the overlap or prediction domain are obtained from the neighboring subdomains that own the corresponding non-overlapping parts.
At the interfaces of non-overlapping subdomains, neighboring subdomains exchange their local nodal values and replace them by the average prescribed in \eqref{dslp:eq:method:averaging}.
Thus, global vectors never need to be assembled.
\end{remark}

\subsection{Full time-stepping algorithm}

Before we can perform the first time step, we need to transfer the initial data to the discrete spaces we work in. Here, we use again the Ritz projection~\(\Rproj\) from \eqref{dslp:eq:RitzProjection} and the \(L^2\)-projection \(\LTproj\) from \eqref{dslp:eq:L2Projection}, so that the initial values of the domain splitting method coincide with the initial global CN approximation, i.e.,  
\begin{equation*}
    \solDS{0} = \pmat{\Rproj \uExakt[0] \\ \LTproj \vExakt[0]} = \solCN{0}.
\end{equation*} 
Other, in particular local, choices are possible. We refer, e.g., to the analysis in \cite{BucH25CG}, where we initialized with a nodal interpolation \(\NodalInt\) in both components.
The specific choice of the Ritz projection is advantageous for the CN stability result and simplifies the overall analysis. This is, however, not a restriction.
After this first projection, we provide each subdomain with the local restrictions
\(\restr{\solDS{0}}{\SDovc{i}}\) and
\(\restr{\solDS{0}}{\overline{D}_i^{\mathrm{p}}}\).
For a later time step \(n\), the available data is given by 
\(\restr{\solDS{n-1}}{\SDovc{i}}\) and
\(\restr{\solDS{n-1}}{\overline{D}_i^{\mathrm{p}}}\) instead.

With these local restrictions, the prediction and subdomain CN solves can be assembled as described above.
We summarize the full non-iterative time integrator in Algorithm~\ref{dslp:alg:DSLP}.

\begin{algorithm}[ht!]
    \SetKwFor{ParFor}{for}{do \normalfont{(in parallel)}}{end}
    \DontPrintSemicolon
    \SetAlgoLined
    \SetKwInput{Input}{input}
    \SetKwInOut{Output}{result}
    \Input{local initial data
        \(\{\restr{\solDS{0}}{\SDovc{i}}\}_{i=1}^{\numberSD}\)
        and the corresponding restrictions on the prediction patches,
        overlapping decomposition
        \(\overline{\fulldomain}=\bigcup_{i=1}^{\numberSD}\SDovc{i}\),
        time step size \(\tau\), final time \(T=\numberTS\tau\),
        prediction parameters \(\povp[1]\), \(\povp[2]\)}
    \(n=1\)\;
    \While{\(n\leq \numberTS\)}{
        \ParFor{\(i=1,\dots,\numberSD\)}{
            choose the interface cutoff \(\chi_i\) around \(\SDovint{i}\)\;
            compute \(\widehat{\delta}\widetilde u_{\chi_i}^n\) from
            \eqref{dslp:eq:method:localizedPredictionIncrement} on
            \(D_i^{\mathrm{p}}\) using the local
            restriction of \(\solDS{n-1}\)\;
            set \(\uLP{i}{n}=\uDS{n-1}
            +\widehat{\delta}\widetilde u_{\chi_i}^n\)\;
        }
        \ParFor{\(i=1,\dots,\numberSD\)}{
            compute \(\solDSloc{i}{n}\) by one CN step on
            \(\SDov{i}\) with boundary data
            \(\restr{\uLP{i}{n}}{\SDovint{i}}\) on \(\SDovint{i}\)\;
        }
        average nodal values on the non-overlapping interfaces according to
        \eqref{dslp:eq:method:averaging}\;
        exchange neighboring data to update
        \(\restr{\solDS{n}}{\SDovc{i}}\) and \(\restr{\solDS{n}}{\overline{D}_i^{\mathrm{p}}}\) for
        the next step\;
        update \(n\leftarrow n+1\)\;
    }
    \Output{
    \(\solDS{\numberTS}\approx \solutionVector(\cdot,T)\)}
    \caption{Domain splitting with localized implicit prediction}
    \label{dslp:alg:DSLP}
\end{algorithm}

\subsection{Approximations within the error analysis}

Before we present the error analysis of our scheme, we gather the notation for the different relevant approximations.
The solution vectors \(\solCNmod{n}\) and~\(\solLP{i}{n}\) are both one-step objects obtained from the same domain splitting approximation~\(\solDS{n-1}\) in the previous step. 
The former is computed on the full domain and used only for theoretical purposes, whereas the latter is computed only near the interface~\(\SDovint{i}\) and provides the boundary data for the local subdomain solve on~\(\SDov{i}\). These local subdomain solves yield the local subdomain solutions~\(\solDSloc{i}{n}\), which are finally averaged to obtain the next domain splitting approximation~\(\solDS{n}\).

\section{Main Results}
\label{dslp:sec:main-results}
In this section, we present the main result of this work, which shows convergence of the domain splitting method with localized implicit prediction from Section~\ref{dslp:sec:method} against the global CN approximation, i.e., 
\begin{equation*}
	\norm[X_h][\big]{\solDS{n}-\solCN{n}} \in \mathcal{O}(\tau^p)
\end{equation*} 
for some \(p > 0\), which depends on the parameter choices within the method.
Instead of a typical CFL condition, which requires a time step restriction of the form \(\tau \leq c h\) for some constant \(c>0\), we pose a condition on the overlap parameters \(\ovp, \povp[1]\) and \(\povp[2]\) ensuring a suitable accuracy for a desired order \(p\), whereas stability is always satisfied. This new result applies to standard conforming finite elements of arbitrary fixed order \(k\geq 1\) and includes the material coefficients \(A\) and \(B\) in the analysis.

The convergence proof is organized around three main ingredients. 
First, we bound the localized prediction error on the artificial interfaces. 
To this end, we adapt localization and decay ideas inspired by \cite{GalM23} to CN increments. 
A central point is that the localization is imposed through the test functions, which allows us to control the resulting prediction error in the \(X_h\)-based framework. 
This novel localization approach requires several adjustments compared to \cite{GalM23}.
Second, the localized prediction error has to be propagated through the local subdomain solves.
We estimate how this boundary error enters the subdomain error by means of a discrete Saint-Venant principle from \cite{BucD26}.
Here, we also include the additional material parameter \(B\) and combine the result with the lifting stability from Assumption~\ref{dslp:ass:boundaryLifting}.
Finally, we control the error introduced by the nodal averaging step. 
In contrast to \cite{BucH25CG}, the averaging stability cannot rely on a CFL condition.
Hence, factors and constants depending on \(\tau/h\) have to be tracked explicitly throughout the argument.
To combine these ingredients, we use the domain splitting error mechanism from \cite{BucH25CG}, again with modifications due to the material parameters \(A\) and \(B\), the higher-order finite element discretization, and the absence of a CFL condition.

The presented fully discrete convergence analysis against the CN approximation can be extended to error bounds against the exact solution, by also accounting for the error of the global CN method. The corresponding error analysis follows the same lines as in \cite[Theorem~4.2]{BucH25CG}.

Let the coefficients \(A,B\in L^\infty(\fulldomain)\) satisfy \(\alpha \leq A(x), B(x) \leq \beta\), as introduced in Section~\ref{dslp:sec:introduction}. The mesh family \(\Th[\fulldomain]\) is affine, matching, shape-regular, and we consider a conforming finite element space discretization with fixed polynomial degree \(k\geq1\). 
The domain decomposition has a fixed finite number \(\numberSD\) of subdomains, overlap of \(\ovp\) mesh layers, and prediction overlap parameters~\(\povp[1], \povp[2]\).

We now state the rather technical convergence result, which in essence shows that the difference between the domain splitting approximation and the global CN approximation decays exponentially as the overlap parameters \(\ovp, \povp[1], \povp[2]\) increase.
The result does not rely on a CFL-type step-size restriction and reads as follows:

\begin{theorem}[Convergence under an overlap condition]
	\label{dslp:thm:ConvergenceOverlapCondition}
    Let \(\uExakt[0] \in H_0^1(\fulldomain)\), \(\vExakt[0] \in L^2(\fulldomain)\) and~\(f \in C([0,T];L^2(\fulldomain))\) be \textit{well-prepared} according to Definition~\ref{dslp:def:data-well-prepared} and let Assumption~\ref{dslp:ass:boundaryLifting} hold.

    Then, the fully discrete domain splitting error satisfies
    \begin{equation}
		\label{dslp:eq:AbstractConvergenceTheoremBound}
        E^n = \norm[X_h][\big]{\solDS{n}-\solCN{n}} \leq C_{\mathrm{data}} n\vartheta \exp(n\vartheta) \qquad \text{for all } n = 1,\dots,\numberTS
    \end{equation}
    with \(C_{\mathrm{data}}\) from Definition~\ref{dslp:def:data-well-prepared} and 
    \begin{equation*}
        \vartheta = \mathfrak{C}_{\tau,h} \numberSD^{\half} \rho^{\ovp}\bigl(\gamma^{\povp[1]} + \gamma^{\povp[2]}\bigr).
    \end{equation*}
	The factor \(\mathfrak{C}_{\tau,h} >0\) depends on \(\tau\) and \(h\) but does not depend on the overlap parameters \(\ovp, \povp[1], \povp[2]\). 
	Further, \(\mathfrak{C}_{\tau,h}\) depends on \(k\), the spatial dimension, the material bounds \(\alpha, \beta\), and the shape regularity of the mesh.
	Both \(\rho\) and \(\gamma\) are contraction factors with \(\rho, \gamma < 1\).

	If \(\ovp,\povp[1],\povp[2]\) are chosen large enough such that \(\vartheta\leq \tau^{p+1}\) for some \(p>0\), it follows that
    \begin{equation}
		\label{dslp:eq:ConvergenceTheoremBound}
		E^n
		\leq \tau^{p}
		C_{\mathrm{data}}
		\,T
		\exp\!\left(T\tau^{p}\right),
	\end{equation}
    i.e., the domain splitting error with respect to the global CN approximation is
	\(\mathcal O(\tau^{p})\). 
\end{theorem}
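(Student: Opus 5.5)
The plan is to derive a one-step error recursion of the form
\begin{equation*}
    E^n \leq (1+\vartheta)E^{n-1} + \vartheta\,\Cdata ,
\end{equation*}
and then close it with a discrete Gronwall argument. The right-hand side of the recursion is a multiplicative perturbation of the previous error plus a term controlled by the data bound from Lemma~\ref{dslp:lem:CNstability}.

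To obtain the recursion, I introduce the auxiliary global CN step \(\solCNmod{n}\), which starts from \(\solDS{n-1}\), and split
\begin{equation*}
    \solDS{n}-\solCN{n} = \bigl(\solDS{n}-\solCNmod{n}\bigr) + \bigl(\solCNmod{n}-\solCN{n}\bigr).
\end{equation*}
The second difference equals \(R(\solDS{n-1}-\solCN{n-1})\), since the forcing terms cancel. By \eqref{dslp:eq:CNoperator-R-NormEstimates} its norm is at most \(E^{n-1}\).

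For the first difference, I use that \(\solCNmod{n}\) is globally continuous, so averaging leaves its restrictions unchanged. This gives
\begin{equation*}
    \solDS{n}-\solCNmod{n} = \averaging\bigl(\{\solDSloc{i}{n}-\solCNmod{n}|_{\SDov{i}}\}_{i}\bigr).
\end{equation*}
An averaging stability bound, with a \(\tau/h\)-dependent constant from the inverse inequality (Lemma~\ref{dslp:Lem:InverseEstimate}), bounds this by the local errors on the non-overlapping parts \(\SD{i}\), summed in \(\ell^2\). This summation is where the factor \(\numberSD^{1/2}\) comes from.

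On each \(\SDov{i}\), the local error \(\solDSloc{i}{n}-\solCNmod{n}\) satisfies a homogeneous local CN step with zero initial data. Its only source is the Dirichlet mismatch \(\uLP{i}{n}-\uCNmod{n}\) on \(\SDovint{i}\). I lift this mismatch with \(\blifting{i}\) (Assumption~\ref{dslp:ass:boundaryLifting}) and apply the discrete Saint-Venant principle to the remaining homogeneous problem. That principle gives decay by a factor \(\rho^{\ovp}\) from the interface strip to \(\SD{i}\), at a cost of \(\Cblift h^{-1/2}\) times \(\tnorm[\LiftStrip{i}]{\uLP{i}{n}-\uCNmod{n}}\).

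The main obstacle is the localized prediction estimate, namely a bound of the form
\begin{equation*}
    \tnorm[\LiftStrip{i}]{\uLP{i}{n}-\uCNmod{n}} \lesssim \bigl(\gamma^{\povp[1]}+\gamma^{\povp[2]}\bigr)\bigl(\norm[X_h]{\solDS{n-1}}+\tau\norm[b]{\overline f_h^n}\bigr).
\end{equation*}
I would prove it in two parts.

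First, the cutoff error. Write the global increment as the sum of the solutions driven by test functions \(\NodalInt(\chi_i w)\) and \(\NodalInt((1-\chi_i)w)\). The second piece has a source supported at least \(\povp[1]\) layers from \(\LiftStrip{i}\). A Caccioppoli-type argument for the coercive form \(\mathcal K\) (cutoffs \(\cutoff{\ell}\) on successive layers, with interpolation errors of \(\NodalInt(\eta w)\) handled by superapproximation) gives decay \(\gamma^{\povp[1]}\) in the \(\tnorm{\cdot}\)-norm. Here \(\gamma\) depends on the ratio of \(\tau/h\) to the mesh size, which is tracked in \(\mathfrak C_{\tau,h}\).

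Second, the truncation error. The \(\chi_i\)-driven global increment decays outside \(\supp\chi_i\) in the same way. Comparing it with the solution on \(D_i^{\mathrm p}\) that has homogeneous boundary values, the Galerkin orthogonality of \(\mathcal K\) yields a quasi-best-approximation estimate. A cut-off comparison function then bounds the difference by the decayed tail, which is of order \(\gamma^{\povp[2]}\).

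Bounding the data by stability, with \(\norm[X_h]{\solDS{n-1}}\leq \Cdata + E^{n-1}\) from Lemma~\ref{dslp:lem:CNstability}, and collecting all constants into \(\vartheta\), yields the recursion above. Unrolling it gives
\begin{equation*}
    E^n \leq \Cdata\bigl((1+\vartheta)^n-1\bigr) \leq \Cdata\, n\vartheta\, e^{n\vartheta},
\end{equation*}
using \(E^0=0\) and \((1+\vartheta)^n-1\leq n\vartheta(1+\vartheta)^{n-1}\).

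For the final statement, if \(\vartheta\leq\tau^{p+1}\), then \(n\vartheta\leq n\tau\,\tau^{p}\leq T\tau^{p}\). Inserting this into the previous bound gives \eqref{dslp:eq:ConvergenceTheoremBound}.
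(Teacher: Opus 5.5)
Your proposal is correct and follows the paper's proof essentially step by step. The common steps are:
\begin{itemize}
\item the split through the auxiliary step $\solCNmod{n}$, with the unitary propagator $R$ handling the second term;
\item the averaging bound, whose $\tau/h$-factor comes from the inverse inequality combined with the relation $z_{u,i}=\frac{\tau}{2}z_{v,i}$, which follows from your zero-initial-data observation;
\item the lifting plus discrete Saint-Venant bound;
\item the prediction estimate, obtained by splitting the test functions via $\chi_i$ and $1-\chi_i$ into a decay part ($\gamma^{\povp[1]}$) and a truncation part ($\gamma^{\povp[2]}$);
\item the recursion $E^n\le(1+\vartheta)E^{n-1}+\vartheta\Cdata$, closed with $E^0=0$.
\end{itemize}
The one step you leave implicit is the base stability bound $\tnorm{\delta u_\Lambda^n}\lesssim(1+\tau/h)\bigl(\norm[X_h]{\solDS{n-1}}+\tau\norm[b]{\overline f_h^n}\bigr)$, obtained by testing the increment problem with its own solution. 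This bound is what lets your decay and truncation estimates be stated in terms of $X_h$-data without a $\tau^{-1}$ loss.
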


Note that the overlap condition
\begin{equation*}
    \vartheta\leq \tau^{p+1} \quad \text{for some } p>0
\end{equation*}
in Theorem~\ref{dslp:thm:ConvergenceOverlapCondition} has a simple interpretation and balances two effects.
The prediction overlap has to gather enough information along the relevant wave characteristics so that the localized implicit prediction is accurate at the artificial interfaces.
Thus, we typically choose \(\povp[1]\) and \(\povp[2]\) of the order~\(\tau /h \), which is theoretically and physically reasonable.
The subdomain overlap then further damps the remaining interface error before the next time step. 
This is typically already achieved with a fixed small number of overlap layers.
Together, these requirements ensure that the localized prediction is sufficiently accurate.
We refer to Section~\ref{dslp:sec:discussion} for a more detailed discussion of the overlap condition and the scaling with respect to \(\tau\) and \(h\).

The proof of Theorem~\ref{dslp:thm:ConvergenceOverlapCondition} is presented in Section~\ref{dslp:subsec:main-result-proof}.
Before that, we collect the main ingredients in Sections~\ref{dslp:subsec:main-result-prediction}-\ref{dslp:subsec:main-result-averaging} below.

\subsection{Prediction estimate on the lifting strip}

The first ingredient is a bound for the prediction error made at the interface \(\SDovint{i}\). 
More precisely, we estimate the difference between the localized implicit prediction \(\uLP{i}{n}\) and \(\uCNmod{n}\).
Here, \(\uCNmod{n}\) is the \(u\)-component of a global CN step started from the same previous domain splitting approximation \(\solDS{n-1}\).
Note that we only need to bound the resulting error in the \(u\)-component on the lifting strip \(\LiftStrip{i}\), the fixed strip around \(\SDovint{i}\) from Assumption~\ref{dslp:ass:boundaryLifting}, because the boundary condition is only formulated in the first component.
To derive the following estimate, we introduce a cutoff function \(\chi_i\) and use localization techniques similar to those from \cite{GalM23}.

\label{dslp:subsec:main-result-prediction}
\begin{lemma}[Localized prediction error at the interface]
	\label{dslp:lem:TestLocalizedPredictionEstimateNew}
	Let \(\LiftStrip{i}\subset \SDov{i}\) be the strip from
	Assumption~\ref{dslp:ass:boundaryLifting}.  
	Let \(\uLP{i}{n}\) be the localized prediction as defined in \eqref{dslp:eq:method:predictedDisplacement} with the cutoff \(\chi_i\) satisfying the conditions stated in Section~\ref{dslp:subsec:method:localizedPrediction}.
	Then, the prediction error at the interface \(\SDovint{i}\) measured in the~\(\tnorm{\cdot}\)-norm defined in \eqref{dslp:eq:tnorm} can be bounded by
	\begin{equation}
		\label{dslp:eq:TestLocalizedPredictionEstimateNew}
		\tnorm[\LiftStrip{i}]{\uLP{i}{n}-\uCNmod{n}}
		\leq
				\CLoc
		\left(\gamma^{\povp[1]}+\gamma^{\povp[2]}\right)
		\left(
			\norm[X_h(\fulldomain)][\big]{\solDS{n-1}}
			+\tau\norm[b, \fulldomain]{\overline{f}_h^n}
		\right).
	\end{equation}
		Here, the factor \(\CLoc\) depends only on the bounds of the coefficients~(\(\alpha\) and~\(\beta\)), the shape regularity of the mesh, the spatial dimension, and at most quadratically on the ratio \(\max \monosetc{\tau/h,1}\), but not on the overlap parameters. 
\end{lemma}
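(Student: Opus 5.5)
We split the prediction error into a cutoff error and a truncation error, and bound each by an exponential decay estimate for the coercive form \(\mathcal K\).

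\textbf{Setup.} Write \(\delta\widetilde u^n \coloneqq \uCNmod{n}-\uDS{n-1}\in\PkZeroTh\) for the global CN increment started from \(\solDS{n-1}\). It solves
\[
\tss[4]\mathcal K_{\fulldomain}(\delta\widetilde u^n,w_h)=F(w_h)\quad\text{for all } w_h\in\PkZeroTh,
\]
where \(F(w_h)=\tau\bbil{\vDS{n-1},w_h}-\tfrac{\tau^2}{2}\abil{\uDS{n-1},w_h}+\tfrac{\tau^2}{2}\bbil{\overline f_h^n,w_h}\). This is \eqref{dslp:eq:CN-increment-formulation} in variational form, and \(\mathcal K\) induces exactly the \(\tnorm{\cdot}\)-norm. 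Split \(F=F_{\chi}+F_{1-\chi}\) with \(F_\chi(w_h)=F(\NodalInt(\chi_i w_h))\). Since \(\NodalInt(\chi_i w_h)+\NodalInt((1-\chi_i)w_h)=w_h\), this splits \(\delta\widetilde u^n=\delta\widetilde u_{\chi}+\delta\widetilde u_{1-\chi}\), both posed globally. The prediction error then decomposes as
\[
\uLP{i}{n}-\uCNmod{n}=\bigl(\widehat{\delta}\widetilde u_{\chi_i}^n-\delta\widetilde u_{\chi}\bigr)-\delta\widetilde u_{1-\chi}.
\]

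\textbf{Cutoff error.} We bound \(\tnorm[\LiftStrip{i}]{\delta\widetilde u_{1-\chi}}\) using that \(F_{1-\chi}\) vanishes on test functions supported within distance \(\povp[1]\) of \(\LiftStrip{i}\).
\begin{itemize}
\item \emph{Caccioppoli/decay step.} Test the equation with \(\NodalInt(\eta^2\delta\widetilde u_{1-\chi})\), where \(\eta\) is a layerwise cutoff equal to \(1\) on \(\Patch{\ell}{\LiftStrip{i}}\) and \(0\) outside \(\Patch{\ell+1}{\LiftStrip{i}}\).
\item \emph{Commutator terms.} Control these by the inverse inequality (Lemma~\ref{dslp:Lem:InverseEstimate}) together with a superapproximation estimate for \(\NodalInt(\eta^2 w_h)-\eta^2 w_h\).
\item \emph{Recursion.} This yields \(\tnorm[\Patch{\ell}{}]{\cdot}^2\le C\,\tnorm[\bLayer{\ell+1}]{\cdot}^2\). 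Adding \(\tnorm[\Patch{\ell}{}]{\cdot}^2\) to both sides gives the contraction \(\tnorm[\Patch{\ell}{}]{\cdot}^2\le \tfrac{C}{C+1}\tnorm[\Patch{\ell+1}{}]{\cdot}^2\). Iterating \(\povp[1]\) times gives the factor \(\gamma^{\povp[1]}\).
\item \emph{Role of \(\tau/h\).} The constant \(C\) involves \(\|\grad\eta\|^2\sim h^{-2}\) compared against the \(4/\tau^2\) mass weight. Hence \(C\lesssim 1+\tau^2/h^2\), which is the quadratic dependence on \(\max\{\tau/h,1\}\).
\end{itemize}

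\textbf{Global stability bound.} Bound \(\tnorm{\delta\widetilde u_{1-\chi}}\) by \(\tss[4]\tnorm{\delta\widetilde u_{1-\chi}}\le\sup_{w_h}F_{1-\chi}(w_h)/\tnorm{w_h}\).
\begin{itemize}
\item Estimate \(\tnorm{\NodalInt((1-\chi_i)w_h)}\lesssim\max\{\tau/h,1\}\tnorm{w_h}\) using \(\|\grad\chi_i\|_\infty\le c_ih^{-1}\) and nodal-interpolation stability on polynomials.
\item Estimate \(|F(z)|\lesssim \tfrac{\tau^2}{2}\bigl(\norm[X_h]{\solDS{n-1}}+\tau\norm[b]{\overline f_h^n}\bigr)\tnorm{z}\), using \(\tau\norm[b]{v}\,\norm[b]{z}\le \tfrac{\tau^2}{2}\norm[b]{v}\,\tfrac{2}{\tau}\norm[b]{z}\).
\end{itemize}
The factors \(\tau^2/4\) cancel, leaving the data factor in \eqref{dslp:eq:TestLocalizedPredictionEstimateNew}.

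\textbf{Truncation error.} For \(e=\widehat{\delta}\widetilde u_{\chi_i}^n-\delta\widetilde u_\chi\), observe that \(\delta\widetilde u_\chi\) satisfies the local equation on \(D_i^{\mathrm p}\), since its right-hand side is supported in \(\supp\chi_i\). So \(e\) is \(\mathcal K\)-orthogonal to \(\PkZeroTh[D_i^{\mathrm p}]\), i.e.\ it is discretely "\(\mathcal K\)-harmonic" there.
\begin{itemize}
\item By Galerkin quasi-optimality, \(\widehat{\delta}\widetilde u_{\chi_i}^n\) is the \(\mathcal K_{D_i^{\mathrm p}}\)-best approximation of \(\delta\widetilde u_\chi\) in \(\PkZeroTh[D_i^{\mathrm p}]\).
\item Compare with the candidate \(\NodalInt(\eta\,\delta\widetilde u_\chi)\), where \(\eta\) cuts off in the outer layers of \(D_i^{\mathrm p}\). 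This gives \(\tnorm{e}\lesssim \max\{\tau/h,1\}\,\tnorm[\text{outer layers}]{\delta\widetilde u_\chi}\).
\item Since \(F_\chi\) vanishes outside \(\supp\chi_i\), the same Caccioppoli contraction, run outward from \(\supp\chi_i\) over \(\povp[2]\) layers, gives decay \(\gamma^{\povp[2]}\tnorm{\delta\widetilde u_\chi}\).
\item Bound \(\tnorm{\delta\widetilde u_\chi}\) by data exactly as in the cutoff step.
\end{itemize}
Restricting \(e\) to \(\LiftStrip{i}\) is then trivial.

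\textbf{Main obstacle.} The key difficulty is the discrete Caccioppoli step with a nonpolynomial test function \(\eta^2 w_h\). We need superapproximation \(\tnorm{\NodalInt(\eta^2 w_h)-\eta^2w_h}_K\lesssim h\|\grad\eta\|_\infty\|w_h\|_{H^1(K)}+\dots\) for arbitrary order \(k\). At the same time, the constant must be tracked so that the contraction factor \(\gamma\) is independent of \(\tau/h\) (or degenerates only through \(\CLoc\)). If \(\gamma\) does depend on \(\tau/h\), my first remedy is to coarsen the layers: use cutoffs varying over \(m\sim\max\{\tau/h,1\}\) layers, so that each step contracts uniformly, and absorb the loss into \(\CLoc\).
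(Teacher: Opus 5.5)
Your argument is the paper's: the same superposition $\uCNmod{n}-\uLP{i}{n}=\delta\widetilde u^n_{1-\chi_i}+(\delta\widetilde u^n_{\chi_i}-\widehat{\delta}\widetilde u^n_{\chi_i})$ from $\NodalInt(\chi_i w_h)+\NodalInt((1-\chi_i)w_h)=w_h$, and then:
\begin{itemize}
\item a layerwise decay bound over $\povp[1]$ layers for the first term;
\item Galerkin orthogonality on $\PkZeroTh[D_i^{\mathrm p}]$ with a cutoff candidate, plus outward decay over $\povp[2]$ layers, for the second;
\item a data bound for the global localized increments, essentially as in Lemmas~\ref{dslp:lem:TestLocalizedIncrementStability}--\ref{dslp:lem:TestLocalizedIncrementTruncation}.
\end{itemize}
Running the first decay inward around $\LiftStrip{i}$ instead of outward from $\supp(1-\chi_i)$ is immaterial.

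What needs correcting is your handling of $\tau/h$ in the layer recursion. The constant $C$ in $\tnorm[\Patch{\ell}{\LiftStrip{i}}]{z}^2\le C\,\tnorm[\bLayer{\ell+1}]{z}^2$ is not a prefactor. It fixes $\gamma^2=C/(1+C)$ and so enters the base of $\gamma^{\povp[1]}$ and $\gamma^{\povp[2]}$. It therefore cannot be moved into a $\CLoc$ independent of the overlap parameters, and it is not ``the quadratic dependence'' of the statement.

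Your route gives the wrong scaling. You complete the square, $\tnorm{\eta z}^2=\mathcal K_{\fulldomain}(z,\eta^2 z)+\int_{\fulldomain}A z^2\abs{\grad\eta}^2\dint{x}$, and bound the last term by $h^{-2}\norm{z}^2\lesssim\frac{\tau^2}{h^2}\cdot\frac{4}{\tau^2}\norm[b]{z}^2$. This gives $C\sim1+\tau^2/h^2$, hence $1-\gamma^2\sim h^2/\tau^2$, so decay only sets in after about $(\tau/h)^2$ layers. The paper's $\gamma$ from Lemma~\ref{dslp:lem:CNlocalizedData} has $W_{\tau/h}=C(1+\tau/h)$, i.e.\ $\abs{\log\gamma}\sim h/\tau$. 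That is what the choice $\povp[1],\povp[2]\sim\tau/h$ in Section~\ref{dslp:sec:discussion} relies on. As written, your argument proves the estimate only with this weaker contraction factor.

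The paper gets the linear dependence as follows. It tests with $\NodalInt(\cutoff{\ell}z_h)$ and uses $0\le\cutoff{\ell}\le1$ to bound the outer energy by $\mathcal K_{\fulldomain}(z_h,\cutoff{\ell}z_h)-\ip[\fulldomain]{A\grad z_h,z_h\grad\cutoff{\ell}}$. Every mixed term is then handled by Young's inequality, $\frac1h\norm[a,K]{z_h}\norm[b,K]{z_h}\le\frac{\tau}{4h}\tnorm[K]{z_h}^2$. With your $\eta^2$ you get the same result by keeping the cross term $2\int_{\fulldomain}A\eta z\grad z\cdot\grad\eta\dint{x}$ instead of completing the square.

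Your coarsening remedy would also restore $\abs{\log\gamma}\sim h/\tau$, but it is unnecessary. A $\gamma$ independent of $\tau/h$ is neither needed nor attainable, since $(\id+\frac{\tau^2}{4}L_h)^{-1}$ only decays on a length scale proportional to $\tau$.

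The quadratic dependence of $\CLoc$ on $\max\{\tau/h,1\}$ comes from elsewhere, and your argument already contains it. It is the factor $1+\tau/h$ in the data bound for the global localized increment, times the factor $\max\{\tau/h,1\}$ from $\tnorm{\NodalInt(\cutoff{\ell}\,\delta u^n_\Lambda)}$ in the truncation step.
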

The result quantifies the error introduced by the localized implicit prediction. It shows that the difference from a global CN approximation is small on the lifting strip around the interface \(\SDovint{i}\), and decreases exponentially with the prediction overlap widths \(\povp[1]\) and \(\povp[2]\).
The proof is based on localization arguments similar to techniques from \cite{GalM23} and carried out at the end of Section~\ref{dslp:sec:localizedPrediction}.

\subsection{Discrete Saint-Venant principle}

The second ingredient is an application of a discrete Saint-Venant principle (cf.~\cite[Theorem~3.1]{BucD26}) to the difference between the local subdomain solution~\(\uDSloc{i}{n}\) and the global CN approximation~\(\uCNmod{n}\) (again obtained from~\(\solDS{n-1}\) in the previous step). 
Here, we use the boundary lifting assumption, see Assumption~\ref{dslp:ass:boundaryLifting}, to represent the predicted artificial boundary value by the lifting on \(\LiftStrip{i}\).

\label{dslp:subsec:main-result-saintvenant}
\begin{lemma}[Local subdomain error]
	\label{dslp:lem:BoundLocalDiff}
	Let Assumption~\ref{dslp:ass:boundaryLifting} hold.
	Then, the local difference on each non-overlapping subdomain \(\SD{i}\) satisfies 
	\begin{equation*}
		\tnorm[\SD{i}][\big]{\uDSloc{i}{n} - \uCNmod{n}}
		\leq 2 \rho^{\ovp} 
		\tnorm[\SDov{i}][\big]{\blifting{i}\bigl(\restr{\uLP{i}{n}-\uCNmod{n}}{\SDovint{i}}\bigr)}
		\leq 2 \Cblift h^{-\half} \rho^{\ovp} \tnorm[\LiftStrip{i}][\big]{\uLP{i}{n}-\uCNmod{n}}
	\end{equation*}
	with
	\begin{equation*}
		\rho = \Bigl(\frac{M_{\tau/h}}{1+M_{\tau/h}}\Bigr)^{\half}, \quad \text{ and } \quad M_{\tau/h} = C   \frac{\beta^{\half}}{\alpha^{\half}} \Bigl( 1 + \frac{\tau}{2 h}\Bigr),
	\end{equation*}
	Here, \(C > 0\) depends only on the polynomial degree \(k\), the spatial dimension and the shape regularity of the mesh \(\Th\). It is independent of \(h,\,\ts\), and the material parameters \(A\) and \(B\).
\end{lemma}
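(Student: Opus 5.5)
We prove Lemma~\ref{dslp:lem:BoundLocalDiff} by an error equation, a lifting split, and a Caccioppoli-type decay argument. Set \(e_i \coloneqq \uDSloc{i}{n}-\restr{\uCNmod{n}}{\SDov{i}}\). Both \(\uDSloc{i}{n}\) and \(\uCNmod{n}\) result from one CN step started from the same data \(\solDS{n-1}\), on \(\SDov{i}\) and on \(\fulldomain\), respectively. We eliminate the velocity exactly as in \eqref{dslp:eq:CN-increment-formulation}. Subtracting the two increment equations, tested with \(w_h\in\PkZeroTh[\SDov{i}]\) extended by zero, all data terms cancel. The result is
\begin{equation*}
  \abil[\SDov{i}]{e_i,w_h}+\frac{4}{\tau^2}\bbil[\SDov{i}]{e_i,w_h}=0
  \quad\text{for all } w_h\in\PkZeroTh[\SDov{i}].
\end{equation*}
So \(e_i\) is discretely ``\(\mathcal K\)-harmonic'' on \(\SDov{i}\). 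It vanishes on \(\dSDov{i}\cap\dfulldomain\), and on \(\SDovint{i}\) it equals \(r_i=\restr{(\uLP{i}{n}-\uCNmod{n})}{\SDovint{i}}\). The inner product associated with this form gives exactly the norm \(\tnorm{\cdot}\) from \eqref{dslp:eq:tnorm}.

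Next we split \(e_i=\blifting{i}r_i+z_i\) with \(z_i\in\PkZeroTh[\SDov{i}]\). The Galerkin orthogonality above shows that \(e_i\) is the \(\mathcal K\)-orthogonal projection of \(\blifting{i}r_i\) onto the complement of \(\PkZeroTh[\SDov{i}]\). Hence \(\tnorm[\SDov{i}]{e_i}\le\tnorm[\SDov{i}]{\blifting{i}r_i}\), and consequently \(\tnorm[\SDov{i}]{z_i}\le 2\tnorm[\SDov{i}]{\blifting{i}r_i}\). This is the source of the factor \(2\). Since \(\blifting{i}r_i\) is supported in the strip \(\LiftStrip{i}\) near the interface, \(e_i=z_i\) away from that strip. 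In particular \(e_i=z_i\) on \(\SD{i}\), which lies \(\ovp\) layers inside.

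The core step is the discrete Saint-Venant decay, following \cite[Theorem~3.1]{BucD26} with the weight \(B\) included. Let \(D_\ell\) be the union of the cells at layer distance \(\geq\ell\) from \(\SDovint{i}\) inside \(\SDov{i}\), so that \(D_{\ovp}\supseteq\SD{i}\). We aim to show
\begin{equation*}
  \tnorm[D_{\ell+1}]{e_i}^2\le M_{\tau/h}\,\tnorm[D_\ell\setminus D_{\ell+1}]{e_i}^2.
\end{equation*}
Adding \(M_{\tau/h}\tnorm[D_{\ell+1}]{e_i}^2\) to both sides gives \(\tnorm[D_{\ell+1}]{e_i}^2\le\rho^2\tnorm[D_\ell]{e_i}^2\). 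Iterating \(\ovp\) times, starting from \(D_0=\SDov{i}\) minus the lifting strip where \(e_i=z_i\), yields \(\tnorm[\SD{i}]{e_i}\le\rho^{\ovp}\tnorm[\SDov{i}]{z_i}\le 2\rho^{\ovp}\tnorm[\SDov{i}]{\blifting{i}r_i}\).

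To obtain the one-layer estimate, we test the error equation with \(w_h=\NodalInt(\eta e_i)\). Here \(\eta\) is a cutoff in \(\PkTh\), equal to \(1\) on \(D_{\ell+1}\) and \(0\) outside \(D_\ell\), with \(|\grad\eta|\lesssim h^{-1}\). We write \(\eta e_i=\NodalInt(\eta e_i)+(\id-\NodalInt)(\eta e_i)\). The product-rule commutator \(\int A e_i\grad e_i\cdot\grad\eta\) lives on the layer and is bounded by \(\beta h^{-1}\norm{e_i}\norm{\grad e_i}\). Using \(\norm[b]{e_i}\le\frac{\tau}{2}\cdot\frac{2}{\tau}\norm[b]{e_i}\) and the inverse inequality (Lemma~\ref{dslp:Lem:InverseEstimate}), this term is at most \(C(\beta/\alpha)^{1/2}(1+\tau/(2h))\tnorm[\text{layer}]{e_i}^2\).

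The main obstacle is the interpolation remainder \((\id-\NodalInt)(\eta e_i)\) for degree \(k\ge2\). It is supported on the layer, and elementwise superconvergence estimates (the \((k+1)\)st derivatives of \(\eta e_i\) vanish except through derivatives of \(\eta\)) combined with inverse estimates bound it by \(C\tnorm[\text{layer}]{e_i}\) in both parts of the norm. Tracking the \(\tau/h\) dependence here carefully, so that only the linear factor \(1+\tau/(2h)\) appears in \(M_{\tau/h}\), is the delicate point. The second inequality of the lemma then follows directly from the lifting stability \eqref{dslp:eq:BoundaryLiftingStability} with \(r_h=\uLP{i}{n}-\uCNmod{n}\).
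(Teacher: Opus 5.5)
Your proposal follows the paper's proof: the error equation is Lemma~\ref{dslp:lem:LocalDifferenceOnSubdomains}, and your Caccioppoli-type layer contraction is the discrete Saint-Venant principle of Lemma~\ref{dslp:lem:DSV-decay}, which the paper takes from \cite[Theorem~3.1]{BucD26} with the \(B\)-weight inserted rather than re-deriving it; the well-posedness bound by the lifting together with \eqref{dslp:eq:BoundaryLiftingStability} then closes the argument. The one slip is where you start the iteration: \(e_i\neq z_i\) on the lifting strip, so run the decay from all of \(\SDov{i}\) and use your own projection bound, \(\tnorm[\SD{i}]{e_i}\leq\rho^{\ovp}\tnorm[\SDov{i}]{e_i}\leq\rho^{\ovp}\tnorm[\SDov{i}]{\blifting{i}r_i}\), which even gives the constant \(1\) in place of \(2\).
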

The estimate in Lemma~\ref{dslp:lem:BoundLocalDiff} follows by applying \cite[Theorem~3.1]{BucD26} to the local difference \(\uDSloc{i}{n}-\uCNmod{n}\) and combining the result with \eqref{dslp:eq:BoundaryLiftingStability}.
Although the estimate is stated in the elliptic \(\tnorm{\cdot}\)-norm of the \(u\)-component, the local error \(\solDSloc{i}{n}-\solCNmod{n}\) has a special structure that later lets us recover control in the full \(X_h\)-norm, meaning that
\begin{equation}
	\label{dslp:eq:NormEqualityLocalDifference}
	\norm[X_h(\SD{i})][\big]{\solDSloc{i}{n} - \solCNmod{n}} =  \tnorm[\SD{i}][\big]{\uDSloc{i}{n} - \uCNmod{n}} \; .
\end{equation}
In Section~\ref{dslp:sec:discreteSaintVenant}, we derive Lemma~\ref{dslp:lem:BoundLocalDiff} in detail, verify the necessary conditions that \(\uDSloc{i}{n} - \uCNmod{n}\) has to satisfy and justify~\eqref{dslp:eq:NormEqualityLocalDifference}.

\subsection{\texorpdfstring{Averaging stability in \(X_h\)}{Averaging stability in the discrete energy space}}
\label{dslp:subsec:main-result-averaging}

The third ingredient for Theorem~\ref{dslp:thm:ConvergenceOverlapCondition} is a stability estimate for the averaging operator \(\averaging\) defined in \eqref{dslp:eq:method:averaging} in the discrete energy norm \(\norm[X_h]{\cdot}\). 
The presented result is a slight modification of \cite[Lemma~7.1]{BucH25CG}, where we used a CFL-type condition to omit the \(\tau / h\) ratio. Instead, we now keep this ratio and include it in the later estimate within the proof of Theorem~\ref{dslp:thm:ConvergenceOverlapCondition}.

\begin{lemma}[Averaging estimate for local errors]
		\label{dslp:Lem:AvgStabilityXh}
		The difference \(\solDS{n}- \solCNmod{n}\) is bounded in the \(X_h(\fulldomain)\)-norm by
		\begin{equation}
			\label{dslp:eq:avg_prop3}
			\begin{aligned}
				\norm[X_h(\fulldomain)][\big]{\solDS{n}- \solCNmod{n}}^2
                 \leq \cavg \Cinv \frac{\beta}{\alpha}\Bigl(  \tss[h^2] +1 \Bigr)  \sumSD \norm[X_h(\SD{i})][\big]{\solDSloc{i}{n}- \solCNmod{n}}^2 , 
			\end{aligned}
		\end{equation}
		with a constant \(\cavg > 0\) and \(\Cinv\) from Lemma~\ref{dslp:Lem:InverseEstimate}, which are both independent of \(h\) and \(\ts\).
\end{lemma}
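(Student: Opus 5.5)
The key observation is that \(\solCNmod{n}\) is a globally continuous finite element function, so the averaging operator \(\averaging\) reproduces it: restricting \(\solCNmod{n}\) to the overlapping subdomains and averaging leaves it unchanged. Since \(\averaging\) is linear and acts componentwise, this gives
\begin{equation*}
	\solDS{n}-\solCNmod{n} = \averaging\bigl(\{\restr{(\solDSloc{i}{n}-\solCNmod{n})}{\SDov{i}}\}_{i=1}^{\numberSD}\bigr).
\end{equation*}
It therefore suffices to bound \(\averaging\) applied to the broken functions \(e^\#\coloneq\sumSD \restr{e_i}{\SD{i}}\), where \(e_i\) is the \(u\)- or \(v\)-component of \(\solDSloc{i}{n}-\solCNmod{n}\), in terms of the piecewise norms on the \(\SD{i}\).

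\emph{Step 1 (local \(L^2\) stability of \(\averaging\)).} I would use the standard properties of the averaging operator \(J_h^{\mathrm{av}}\) from \cite[Section~22.2]{ErnG21I}. By norm equivalence on the reference element, shape regularity, and the bounded number of cells in each node's connectivity class, one gets \(\norm[K]{\averaging(e)}^2\leq c\sum_{K'\cap K\neq\emptyset}\norm[K']{e^\#}^2\). Summing over \(K\) with finite overlap yields \(\norm{\averaging(e)}^2\leq c\sum_i\norm[\SD{i}]{e_i}^2\), and inserting the coefficient bounds gives \(\norm[b]{\averaging(e)}^2\leq c\frac{\beta}{\alpha}\sum_i\norm[b,\SD{i}]{e_i}^2\). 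This handles the \(v\)-component directly.

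\emph{Step 2 (energy component via inverse inequality).} For the \(u\)-component, I would apply Lemma~\ref{dslp:Lem:InverseEstimate} elementwise to the continuous function \(\averaging(e)\), obtaining \(\norm[a]{\averaging(e)}^2\leq \beta\Cinv h^{-2}\norm{\averaging(e)}^2\). Combined with Step 1, this is bounded by \(c\Cinv\frac{\beta}{\alpha}h^{-2}\sum_i\norm[b,\SD{i}]{e_i}^2\). Writing \(h^{-2}=\frac{\tau^2}{4h^2}\cdot\frac{4}{\tau^2}\), the \(u\)-contribution is controlled by \(\frac{\tau^2}{h^2}\) times the \(\frac{4}{\tau^2}\)-weighted \(b\)-norm. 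That weighted norm is exactly the second part of \(\tnorm[\SD{i}]{\cdot}\), which by \eqref{dslp:eq:NormEqualityLocalDifference} equals the local \(X_h(\SD{i})\)-norm. This is where the special structure of the local error enters: by the CN relation \eqref{dslp:eq:CNfirstOrderFormulation-a}, which holds both for the local solve and for \(\solCNmod{n}\) with the same starting value, the \(v\)-difference equals \(\frac{2}{\tau}\) times the \(u\)-difference. Hence \(\frac{4}{\tau^2}\norm[b]{u\text{-diff}}^2=\norm[b]{v\text{-diff}}^2\le\) the \(X_h\)-norm. I would also make sure Step 1 accounts for the \(a\)-part with the \(+1\) term, or simply accept the weaker \(\tau^2/h^2\) scaling there.

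\emph{Step 3.} Adding the two components gives the factor \(\cavg\Cinv\frac{\beta}{\alpha}(\frac{\tau^2}{h^2}+1)\), after absorbing constants into \(\cavg\) and using \(\Cinv\geq1\) without loss of generality. The main obstacle I expect is the rigorous local stability of \(\averaging\) on broken polynomials in Step 1, in particular the bookkeeping at nodes shared by several \(\SD{i}\). I would handle it by bounding each averaged nodal value by the maximum of the local nodal values and using the inverse-type equivalence \(\norm[K]{p}^2\sim h^d\sum_j p(a_{K,j})^2\) for \(p\in\PkK\).
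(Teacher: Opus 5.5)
Your proposal is correct and uses the same ingredients as the paper's proof: reproduction of $\solCNmod{n}$ under $\averaging$ together with linearity, the relation $z_{u,i}=\frac{\tau}{2}z_{v,i}$ (Lemma~\ref{dslp:lem:LocalDifferenceOnSubdomains}), the inverse inequality, and the weighted $L^2$-stability of Lemma~\ref{dslp:lem:AvgL2Stability}, which the paper cites rather than re-deriving it as you do in Step~1. The only difference is ordering---the paper first pushes the relation through $\averaging$ to get $\uDS{n}-\uCNmod{n}=\frac{\tau}{2}(\vDS{n}-\vCNmod{n})$ and then applies the $L^2$-stability once to the $v$-component, whereas you apply it to each component and use the relation before averaging---and in both versions the $+1$ comes from the $v$-component, so the $a$-part needs no extra treatment beyond your Step~2.
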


In essence, Lemma~\ref{dslp:Lem:AvgStabilityXh} is an averaging stability result tailored to the local error structure of the method. It uses the relation between the local \(u\)- and \(v\)-errors to control the global \(X_h\)-error after averaging.
As mentioned above, the proof of this result is motivated by \cite[Lemma~7.1]{BucH25CG}.
It will be presented in detail in Section~\ref{dslp:sec:averaging}.

\subsection{Proof of the convergence result}
\label{dslp:subsec:main-result-proof}

We now have the three ingredients needed to prove Theorem~\ref{dslp:thm:ConvergenceOverlapCondition}.
In essence, the following proof combines Lemmas~\ref{dslp:lem:TestLocalizedPredictionEstimateNew}, \ref{dslp:lem:BoundLocalDiff}, and \ref{dslp:Lem:AvgStabilityXh}. 
The prediction estimate controls the interface error, the discrete Saint-Venant estimate transfers this control to the subdomain interiors, and the averaging estimate turns the local bounds into a global \(X_h\)-error bound.

\begin{proof}[Proof of Theorem~\ref{dslp:thm:ConvergenceOverlapCondition}]
    Let \(\solCNmod{n}\) be the global CN approximation after one time step using~\(\solDS{n-1}\) as previous approximation and with the same right-hand side \(f\). 
	By the triangle inequality, we have
	\begin{equation}
		\label{dslp:eq:TriangleInequality}
		E^n \coloneqq \norm[X_h][\big]{\solDS{n} - \solCN{n}}
		\leq
		\norm[X_h][\big]{\solDS{n}-\solCNmod{n}}
		+
		\norm[X_h][\big]{\solCNmod{n}-\solCN{n}} .
	\end{equation}
	With the CN propagation operator \(R\) defined in \eqref{dslp:eq:defCNoperator-R}, we can rewrite the second difference as
	\begin{equation*}
		\solCNmod{n}-\solCN{n} = R\bigl( \solDS{n-1} - \solCN{n-1}\bigr). 
	\end{equation*}  
	Since the CN propagation operator \(R\) is unitary in the \(X_h\)-norm, this implies
	\begin{equation}
		\label{dslp:eq:estimate-TermB}
		\norm[X_h][\big]{\solCNmod{n}-\solCN{n}}
		\leq \norm[X_h][\big]{\solDS{n-1} - \solCN{n-1}}.
	\end{equation}
	For the difference between \(\solDS{n}\) and \(\solCNmod{n}\), we first use Lemma~\ref{dslp:Lem:AvgStabilityXh} and \eqref{dslp:eq:NormEqualityLocalDifference} to obtain 
	\begin{equation*}
		\begin{aligned}
			\norm[X_h][\big]{\solDS{n}-\solCNmod{n}}^2
			&\leq \cavg \Cinv \frac{\beta}{\alpha}\Bigl(  \tss[h^2] +1 \Bigr)  \sumSD \norm[X_h(\SD{i})][\big]{\solDSloc{i}{n}- \solCNmod{n}}^2 \\
			&= \cavg \Cinv \frac{\beta}{\alpha}\Bigl(  \tss[h^2] +1 \Bigr)  \sumSD \tnorm[\SD{i}][\big]{\uDSloc{i}{n} - \uCNmod{n}}^2
		\end{aligned}
	\end{equation*}
		On each subdomain \(\SD{i}\), \(i\in\monosetc{1,\dots,\numberSD}\), we now apply Lemma~\ref{dslp:lem:BoundLocalDiff} and Lemma~\ref{dslp:lem:TestLocalizedPredictionEstimateNew}, which together yield
	\begin{equation*}
		\tnorm[\SD{i}][\big]{\uDSloc{i}{n} - \uCNmod{n}} \leq 
			2 \Cblift h^{-\half} \rho^{\ovp} \CLoc
		\left(\gamma^{\povp[1]}+\gamma^{\povp[2]}\right)
		\left(
			\norm[X_h(\fulldomain)][\big]{\solDS{n-1}}
			+\tau\norm[b,\fulldomain]{\overline{f}_h^n}
		\right)\; .
	\end{equation*}
	To bound against the data, we apply the CN stability bound from Lemma~\ref{dslp:lem:CNstability} and get
	\begin{equation*}
		\begin{aligned}
			\norm[X_h][\big]{\solDS{n-1}}
			+\tau\norm[b]{\overline{f}_h^n} 
			\leq \norm[X_h][\big]{\solDS{n-1}- \solCN{n-1}} + \norm[X_h][\big]{\solCN{n-1}} +\tau\norm[b]{\overline{f}_h^n} 
			\leq E^{n-1} + C_{\mathrm{data}}, 
		\end{aligned}
	\end{equation*}
	with \(C_{\mathrm{data}} = C_{\mathrm{data}}(\uExakt[0], \vExakt[0], f, T)\).
	Applying these estimates for every interface \(\SDovint{i}\) separately yields
	\begin{equation}
			\label{dslp:eq:estimate-TermA}
			\norm[X_h][\big]{\solDS{n}-\solCNmod{n}}
			\leq
			\mathfrak{C}_{\tau,h} \numberSD^{\half} \rho^{\ovp}\bigl(\gamma^{\povp[1]} + \gamma^{\povp[2]}\bigr)
			\left(
				E^{n-1} + C_{\mathrm{data}}
			\right),
	\end{equation}
	where
	\begin{equation}
		\label{dslp:eq:factor-mathfrakC}
		\mathfrak{C}_{\tau,h} \coloneqq (\cavg \Cinv)^{\half} \frac{\beta^{\half}}{\alpha^{\half}}\Bigl(  \ts[h] +1 \Bigr) 2 \Cblift h^{-\half} \CLoc. 
	\end{equation}
	Recall that \(\CLoc\) depends at most quadratically on \(\tau/h\).
	By combining \eqref{dslp:eq:estimate-TermA} and \eqref{dslp:eq:estimate-TermB} with \eqref{dslp:eq:TriangleInequality}, we get
	\begin{equation}
		\label{dslp:eq:errorDSCNn}
		E^n = \norm[X_h][\big]{\solDS{n}-\solCN{n}} \leq
			(1+\vartheta)E^{n-1}
			+\vartheta
			C_{\mathrm{data}}, 
	\end{equation}
	with \(\vartheta = \mathfrak{C}_{\tau,h}  \numberSD^{\half} \rho^{\ovp}\bigl(\gamma^{\povp[1]} + \gamma^{\povp[2]}\bigr) \).
	Since the initial values \(\solCN{0}\) and \(\solDS{0}\) coincide, the initial error vanishes, i.e., \(E^0=0\), and the iterated use of~\eqref{dslp:eq:errorDSCNn} gives
	\begin{equation}
		\label{dslp:eq:convergenceBoundRecursion}
		E^n 
		\leq \bigl(1 + \vartheta\bigr)^n E^0 + \sum_{k=0}^{n-1} (1+ \vartheta)^k  \vartheta C_{\mathrm{data}} = C_{\mathrm{data}}  \bigl((1+ \vartheta)^n -1\bigr) .
	\end{equation}
	Finally, we use that
	\[
		(1+\vartheta)^n-1
		\leq
		n\vartheta\exp(n\vartheta)
	\]
	to obtain \eqref{dslp:eq:AbstractConvergenceTheoremBound}.
	Choosing \(\ovp,\povp[1],\povp[2]\) such that \(\vartheta\leq \tau^{p+1}\) for some \(p>0\) further yields 
	\begin{equation*}
		(1+\vartheta)^n-1 \leq n\vartheta\exp(n\vartheta)
		\leq
		T\tau^{p}\exp\!\left(T\tau^{p}\right), 
	\end{equation*}
	since \(n\tau\leq T\). 
	Together with \eqref{dslp:eq:convergenceBoundRecursion}, this proves
	\eqref{dslp:eq:ConvergenceTheoremBound}.
\end{proof}

The following three sections prove the main ingredients used above, each
addressing one distinct part of the convergence argument.
Section~\ref{dslp:sec:localizedPrediction} derives the localized prediction estimate at the artificial interfaces. 
Section~\ref{dslp:sec:discreteSaintVenant} shows how this boundary error propagates into the local subdomain solves by means of the discrete Saint-Venant principle. 
Finally, Section~\ref{dslp:sec:averaging} proves the stability of the averaging operator in the discrete energy space \(X_h\).

\section{Localized implicit prediction} 
\label{dslp:sec:localizedPrediction}

\subsection{CN with localized data}
\label{dslp:subsec:localizedPrediction:CNlocalized}
In this section, we investigate properties of the CN approximation~\(\solCN{n}\) at time \(t_n\) under the assumption of localized data at time~\(t_{n-1}\). 
More precisely, we assume that~\(\solCN{n-1}\) and later also the right-hand side \(f(t)\) for \(t\in[t_{n-1}, t_n]\) are supported in \(\omega \subset \fulldomain\). 
We derive bounds for \(\solCN{n}\) in suitable norms away from \(\omega\), i.e., on \(\fulldomain \setminus \omega\). These bounds are the key ingredients for the prediction error estimate of the localized implicit prediction in Section~\ref{dslp:subsec:localizedPrediction:EstimatesPrediction}.
These results are motivated by previous work from \cite{GalM23}. However, the transfer to the present CN setting with localized data requires several adjustments, which is why we present the arguments from scratch.

First, we show that we can switch between the \(\tnorm[\fulldomain \setminus \omega]{\cdot}\)-norm of the \(u\)-component and the energy norm~\(\norm[X_h(\fulldomain \setminus \omega)]{\cdot}\) of the full approximation vector, when the underlying data is supported in \(\omega\). 

\begin{lemma}[CN elliptic control away from data]
	\label{dslp:lem:CN-NormChangeAbsentData}
		Assume \(\uCN{n-1}, \vCN{n-1}\) to be supported in \(\omega \subset \fulldomain\). Then, the CN approximation \(\solCN{n} = \pmatT{\uCN{n} \; \vCN{n}}\) satisfies
		\begin{equation*}
			\tnorm[\fulldomain \setminus \omega]{\uCN{n}} \leq \norm[X_h(\fulldomain \setminus \omega)]{\solCN{n}}.
		\end{equation*}
\end{lemma}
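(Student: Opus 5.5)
The plan is to get the claim directly from the first Crank--Nicolson relation \eqref{dslp:eq:CNfirstOrderFormulation-a}. The point is that this relation contains neither \(L_h\) nor the forcing \(\overline{f}_h^n\), and it holds as an identity between finite element functions in \(\PkTh\), hence pointwise almost everywhere in \(\fulldomain\). In particular, no assumption on the support of \(f\) is needed for this lemma. Solving \eqref{dslp:eq:CNfirstOrderFormulation-a} for the velocity gives
\begin{equation*}
    \vCN{n} = \frac{2}{\tau}\bigl(\uCN{n}-\uCN{n-1}\bigr) - \vCN{n-1}
    \quad\text{in } \fulldomain .
\end{equation*}

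Next I restrict this identity to \(\fulldomain\setminus\omega\). By assumption, \(\uCN{n-1}\) and \(\vCN{n-1}\) are supported in \(\omega\), so they vanish almost everywhere on \(\fulldomain\setminus\omega\). Hence
\begin{equation*}
    \restr{\vCN{n}}{\fulldomain\setminus\omega} = \frac{2}{\tau}\restr{\uCN{n}}{\fulldomain\setminus\omega}
\end{equation*}
almost everywhere. Since \(\norm[b,D]{\cdot}\) is just a weighted \(L^2\)-integral over \(D\), it follows that \(\norm[b,\fulldomain\setminus\omega]{\vCN{n}}^2 = \frac{4}{\tau^2}\norm[b,\fulldomain\setminus\omega]{\uCN{n}}^2\).

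Finally, I add \(\norm[a,\fulldomain\setminus\omega]{\uCN{n}}^2\) to both sides. By the definition \eqref{dslp:eq:tnorm} of \(\tnorm[D]{\cdot}\) and the definition of the local energy norm \(\norm[X_h(D)]{\cdot}\), this gives
\begin{equation*}
    \tnorm[\fulldomain\setminus\omega]{\uCN{n}}^2
    = \norm[a,\fulldomain\setminus\omega]{\uCN{n}}^2 + \norm[b,\fulldomain\setminus\omega]{\vCN{n}}^2
    = \norm[X_h(\fulldomain\setminus\omega)]{\solCN{n}}^2 .
\end{equation*}
This is in fact an equality, which is stronger than the stated inequality.

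I expect no real obstacle here. The only point to check is that \eqref{dslp:eq:CNfirstOrderFormulation-a} is a genuine pointwise identity of finite element functions and not merely a weak identity tested against a subspace. It is pointwise because the scheme is written in operator form on \((\PkTh,\norm[b]{\cdot})\) and the first equation involves only identity operators. A second point is that \(\fulldomain\setminus\omega\) need not match the mesh. This is harmless, because both norms are integrals over that set and the identity holds almost everywhere.
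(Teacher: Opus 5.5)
Your proposal is correct and follows the paper's argument. Both use the first CN relation \eqref{dslp:eq:CNfirstOrderFormulation-a} on \(\fulldomain\setminus\omega\), where \(\uCN{n-1}\) and \(\vCN{n-1}\) vanish, to trade \(\frac{4}{\tau^2}\|\uCN{n}\|_{b,\fulldomain\setminus\omega}^2\) for \(\|\vCN{n}\|_{b,\fulldomain\setminus\omega}^2\). The only difference is minor: the paper passes through the triangle inequality and records an inequality, whereas you note directly that \(\vCN{n}=\frac{2}{\tau}\uCN{n}\) there and so obtain equality.
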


\begin{remark}[No uniform norm change for arbitrary discrete functions]
Let us emphasize that a norm change between \(\tnorm{\cdot}\) and \(\norm[X_h]{\cdot}\) as in Lemma~\ref{dslp:lem:CN-NormChangeAbsentData} is not possible for general \(u_h\in \PkZeroTh[]\).
Recall that
\[
    \tnorm{u_h}^2
    =
    \norm[a]{u_h}^2 + \frac{4}{\tss}\norm[b]{u_h}^2 .
\]
Bounding \(\tnorm{u_h}\) by the \(X_h\)-norm of an associated vector \(x_h=\pmatT{u_h\;v_h}\) would require controlling the weighted \(L^2\)-term by the \(\norm[a]{\cdot}\)-norm of \(u_h\).
A Poincaré-type estimate yields, at best,
\[
    \norm[a]{u_h} \leq \tnorm{u_h} \leq c_a \ts^{-1}\norm[a]{u_h},
\]
with a constant \(c_a>0\) independent of \(\tau\), so the resulting factor \(\ts^{-1}\) would spoil the subsequent estimates for small \(\tau\). The preceding lemma avoids this loss by exploiting the CN relation between the two components of the full vector \(\solCN{n}\).
\end{remark}

\begin{proof}[Proof of Lemma~\ref{dslp:lem:CN-NormChangeAbsentData}]
	Recall that
	\begin{equation}
		\label{dslp:tnormExterioromega}
		\tnorm[\fulldomain \setminus \omega]{\uCN{n}} = \Bigl( \norm[a, \fulldomain \setminus \omega]{\uCN{n}}^2 + \frac{4}{\tss}\norm[b, \fulldomain \setminus \omega]{\uCN{n}}^2 \Bigr)^{\half}. 
	\end{equation}
	Using the CN scheme, we can rewrite 
	\begin{equation*}
		\begin{aligned}
				\norm[b,\fulldomain \setminus \omega]{\uCN{n}} 
				&= \norm[b, \fulldomain \setminus \omega][\big]{\uCN{n-1} + \ts[2] (\vCN{n} + \vCN{n-1})} \\
				&\leq \norm[b,\fulldomain \setminus \omega]{\uCN{n-1}} + \ts[2] \norm[b,\fulldomain \setminus \omega]{\vCN{n-1}} 
				+ \ts[2] \norm[b,\fulldomain \setminus \omega]{\vCN{n}} \\
				&= \ts[2] \norm[b,\fulldomain \setminus \omega]{\vCN{n}}, 
		\end{aligned}
	\end{equation*}
	since \(\uCN{n-1}, \vCN{n-1}\) are supported in \(\omega\). Thus, we obtain with \eqref{dslp:tnormExterioromega}
	\begin{equation*}
	\tnorm[\fulldomain \setminus \omega]{\uCN{n}} \leq \Bigl( \norm[a, \fulldomain \setminus \omega]{\uCN{n}}^2 + \norm[b,\fulldomain \setminus \omega]{\vCN{n}}^2 \Bigr)^{\half} = \norm[X_h(\fulldomain \setminus \omega)]{\solCN{n}}.	
	\qedhere
	\end{equation*}
\end{proof}

The following lemma gives a stability bound for nodal interpolation applied to products of functions in the discrete space \(\PkTh\).
The result is also stated in \cite[Lemma~4.1]{BucD26}, but we include it here because it is used repeatedly below. 

\begin{lemma}[Interpolation stability for discrete products]
    \label{dslp:Lem:StabilityIhProductIh}
    Let \(K \in \Th\). For \(m=0,1\) there exists a constant \(\Cint\) uniform in \(K\) and \(h\), such that
    \begin{equation*}
        \abs{\NodalInt(\phi_h \psi_h)}_{H^{m}(K)} \leq \abs{\phi_h \psi_h}_{H^m(K)} + \abs{(\id -\NodalInt)(\phi_h \psi_h)}_{H^{m}(K)} \leq \Cint \abs{\phi_h \psi_h}_{H^m(K)},
    \end{equation*}
    for all \(\phi_h, \psi_h \in \PkTh\). The constant \(\Cint\) depends on the polynomial degree \(k\), the spatial dimension~\(d\) and the shape regularity of the mesh \(\Th\). For convenience, we denote here \(H^0(K) \coloneqq L^2(K)\).
\end{lemma}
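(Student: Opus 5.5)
The first inequality is just the triangle inequality, since \(\NodalInt(\phi_h\psi_h) = \phi_h\psi_h - (\id-\NodalInt)(\phi_h\psi_h)\) on \(K\). The substance is the second inequality, which reduces to a local interpolation error bound of the form \(\abs{(\id-\NodalInt)(\phi_h\psi_h)}_{H^m(K)}\leq c\,\abs{\phi_h\psi_h}_{H^m(K)}\). The key observation is that on each \(K\) the product \(\phi_h\psi_h\) is a polynomial of degree at most \(2k\). All estimates therefore take place on a finite-dimensional space, and \(\NodalInt\) acts on it through the local interpolant \(\mathcal{I}_K\) only.

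I would argue on the reference element \(\widehat K\) and then scale. Let \(\widehat{\mathcal I}\) be the reference Lagrange interpolant of degree \(k\), viewed as a linear map on \(\mathbb P_{2k}(\widehat K)\). It reproduces \(\mathbb P_k(\widehat K)\), and in particular constants, so \(\id-\widehat{\mathcal I}\) vanishes on \(\mathbb P_0\).

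For \(m=1\), I quotient by constants. On \(\mathbb P_{2k}(\widehat K)/\mathbb P_0\) the \(H^1\)-seminorm is a norm. The map \(\id-\widehat{\mathcal I}\) is well defined there and linear on a finite-dimensional space, hence bounded:
\[
\abs{(\id-\widehat{\mathcal I})\widehat p}_{H^1(\widehat K)}\leq \widehat c\,\abs{\widehat p}_{H^1(\widehat K)}.
\]
For \(m=0\), the same norm-equivalence argument directly gives \(\norm[\widehat K]{(\id-\widehat{\mathcal I})\widehat p}\leq \widehat c\,\norm[\widehat K]{\widehat p}\).

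Next, transfer to \(K\) through the affine map \(F_K\). Since nodal interpolation commutes with affine pullback, \((\mathcal I_K p)\circ F_K=\widehat{\mathcal I}(p\circ F_K)\). The standard scaling \(\abs{p}_{H^m(K)}\simeq h_K^{d/2-m}\abs{p\circ F_K}_{H^m(\widehat K)}\) then holds, with constants depending only on shape regularity. The powers of \(h_K\) are identical on both sides and cancel, so the constant is uniform in \(K\) and \(h\). It depends only on \(k\), \(d\), and shape regularity, because the space \(\mathbb P_{2k}\) and \(\widehat{\mathcal I}\) are fixed. Setting \(\Cint=1+c\) completes the proof.

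The only delicate step is the \(m=1\) case. A naive \(L^2\)-based bound followed by an inverse inequality would lose a factor \(h^{-1}\). The quotient by constants, using that \(\mathcal I_K\) preserves constants, is what yields a pure seminorm bound without this loss.
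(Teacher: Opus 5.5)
Your argument is correct and complete. The first inequality is handled exactly as in the paper, by the triangle inequality. For the second inequality the paper gives no proof of its own and only refers to \cite[Lemma~4.1]{BucD26}. Your reference-element argument is therefore a self-contained substitute for that citation rather than a reconstruction of the paper's reasoning.

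What your route buys is transparency. It shows why $\Cint$ depends only on $k$, $d$ and shape regularity: the space $\mathbb{P}_{2k}(\widehat K)$ and the operator $\widehat{\mathcal I}$ are fixed, and the affine scaling factors combine into $\|B_K\|\,\|B_K^{-1}\|$, which is bounded by shape regularity (here $B_K$ denotes the linear part of $F_K$). It also shows that the product structure matters only through $\phi_h\psi_h|_K\in\mathbb{P}_{2k}(K)$. In fact you prove $|(\id-\mathcal I_K)p|_{H^m(K)}\le c\,|p|_{H^m(K)}$ for every $p\in\mathbb{P}_{2k}(K)$.

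Your quotient-by-constants step for $m=1$ can equivalently be carried out directly on $K$. Write $(\id-\mathcal I_K)p=(\id-\mathcal I_K)(p-\bar p)$ with $\bar p$ the mean value of $p$ on $K$. Then combine an inverse inequality on $\mathbb{P}_{2k}(K)$, your $m=0$ bound, and the Poincar\'e inequality $\|p-\bar p\|_{L^2(K)}\le c\,h_K|p|_{H^1(K)}$. The factors $h_K^{-1}$ and $h_K$ cancel, which makes explicit how the $h^{-1}$ loss you point out in your last paragraph is avoided.

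The citation keeps the paper short, while your version makes the lemma verifiable without the external reference.
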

\begin{proof}
	For the second inequality, see \cite[Lemma 4.1]{BucD26}. The first holds by the triangle inequality.
\end{proof}

The next result is a decay estimate for the CN approximation away from the support of the data and based on~\cite{GalM23}. It is analogous to decay estimates in the context of multiscale methods, where local right-hand sides generate solutions whose energy decreases across successive element layers; see, e.g., \cite{MalP21,MalP14}. Similar ideas are also used in \cite{AltHP20}. 

\begin{lemma}[CN decay for localized data]
	\label{dslp:lem:CNlocalizedData}
	Assume \(\uCN{n-1}, \vCN{n-1}, f_h^n, f_h^{n-1}\) to be supported in \(\omega \subset \fulldomain\). Then, for \(\ell \in \mathbb{N}_0\), we have
	\begin{equation*}
        \tnorm[\fulldomain \setminus \Patch{\ell}{\omega}]{\uCN{n}} \leq \gamma^{\ell} \tnorm[\fulldomain \setminus \omega]{\uCN{n}} \leq \gamma^{\ell} \norm[X_h(\fulldomain \setminus \omega)]{\solCN{n}} ,
	\end{equation*}
	with the contraction factor 
	\begin{equation}
		\label{dslp:eq:CNLocalizationConstant}
		\gamma = \Bigl(\frac{W_{\tau/h}}{1 + W_{\tau/h}}\Bigr)^{1/2} < 1,
		\qquad
		W_{\tau/h} = C\Bigl(1+\frac{\tau}{h}\Bigr),
	\end{equation}
    where the constant \(C>0\) depends only on the material contrast \(\frac{\beta^{\half}}{\alpha^{\half}}\), the polynomial degree \(k\), the spatial dimension, and the shape regularity of the underlying mesh.
\end{lemma}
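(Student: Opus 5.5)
The second inequality is exactly Lemma~\ref{dslp:lem:CN-NormChangeAbsentData}. So it suffices to prove the contraction $\tnorm[\fulldomain \setminus \Patch{\ell}{\omega}]{\uCN{n}}^2 \leq \gamma^2\, \tnorm[\fulldomain \setminus \Patch{\ell-1}{\omega}]{\uCN{n}}^2$ for each $\ell\geq 1$ and then iterate it down to $\Patch{0}{\omega}\supseteq\omega$. Since $\omega\subset\Patch{0}{\omega}$, the norm over $\fulldomain\setminus\Patch{0}{\omega}$ is bounded by the norm over $\fulldomain\setminus\omega$.

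\textbf{Variational identity.} We use the increment formulation \eqref{dslp:eq:CN-increment-formulation}. Multiplying by $4/\tau^2$ and using $\uCN{n}=\uCN{n-1}+\delta\uCN{n}$, it reads
\begin{equation*}
	\abil[\fulldomain]{\uCN{n},w_h}+\frac{4}{\tau^2}\bbil[\fulldomain]{\uCN{n},w_h}
	=F(w_h)
	\quad\text{for all } w_h\in\PkZeroTh,
\end{equation*}
where $F(w_h)$ involves only $\uCN{n-1}$, $\vCN{n-1}$ and $\overline f_h^n$ paired with $w_h$ (with $a$-pairings against $\uCN{n-1}$ and $b$-pairings against the remaining terms). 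By the support assumption, $F(w_h)=0$ whenever $w_h$ vanishes on $\omega$. So $\uCN{n}$ is ``$\mathcal K$-harmonic'' away from $\omega$, where $\mathcal K=a+\tfrac{4}{\tau^2}b$ is exactly the form inducing $\tnorm{\cdot}$.

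\textbf{Caccioppoli step.} Fix $\ell\ge1$ and take a cutoff $\eta\in\PkTh$ (piecewise linear is enough) with $\eta\equiv0$ on $\Patch{\ell-1}{\omega}$, $\eta\equiv1$ on $\fulldomain\setminus\Patch{\ell}{\omega}$, $0\le\eta\le1$ and $\|\grad\eta\|_\infty\lesssim h^{-1}$. Test with $w_h=\NodalInt(\eta\uCN{n})\in\PkZeroTh$, which vanishes on $\omega$, so the right-hand side is zero. Write $\eta u = \NodalInt(\eta u) + (\id-\NodalInt)(\eta u)$ with $u=\uCN{n}$. Then
\begin{equation*}
	\mathcal K_{\fulldomain}(u,\eta u) = \mathcal K_{\bLayer{\ell}}\bigl(u,(\id-\NodalInt)(\eta u)\bigr).
\end{equation*}
This holds because $(\id-\NodalInt)(\eta u)$ is supported in the layer $\bLayer{\ell}$, where $\eta$ is nonconstant. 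The left side is bounded below by $\tnorm[\fulldomain\setminus\Patch{\ell}{\omega}]{u}^2$ minus the cross term $\int A\, u\,\grad\eta\cdot\grad u$ over $\bLayer{\ell}$. Nonnegativity of the remaining part on the layer is fine since $\eta\ge0$ and the part $\int\eta(A|\grad u|^2+\tfrac4{\tau^2}Bu^2)\ge0$. The cross term is bounded by $\beta h^{-1}\norm[\bLayer{\ell}]{u}\,\abs{u}_{H^1(\bLayer{\ell})}$.

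\textbf{Bounding the layer terms.} For the right side, use the interpolation error estimate behind Lemma~\ref{dslp:Lem:StabilityIhProductIh}. Since $\eta u$ is piecewise polynomial of degree $k+1$ and the $(k+1)$-st derivative only hits $\eta$ once, this gives $\abs{(\id-\NodalInt)(\eta u)}_{H^m(K)}\lesssim h^{1-m}\|\grad\eta\|_\infty\abs{u}_{H^1(K)}$ up to inverse estimates. The result is a bound of the form $\lesssim \abs{u}_{H^1(\bLayer{\ell})}^2 + \tfrac{1}{\tau^2}h\,\norm{u}\,\abs{u}_{H^1}$ on the layer. Every term is then absorbed into $C(1+\tau/h)\tnorm[\bLayer{\ell}]{u}^2$ using:
\begin{itemize}
	\item $h^{-1}\norm{u}\abs{u}_{H^1} = (\tau/h)\,(\norm{u}/\tau)\,\abs{u}_{H^1}$, for the cross term;
	\item the inverse inequality (Lemma~\ref{dslp:Lem:InverseEstimate}) together with Young's inequality, for the remaining terms.
\end{itemize}
The constant also picks up factors $\beta/\alpha$ from switching between weighted and unweighted norms.

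\textbf{Conclusion and main obstacle.} This yields
\begin{equation*}
	\tnorm[\fulldomain\setminus\Patch{\ell}{\omega}]{u}^2\le W_{\tau/h}\,\tnorm[\bLayer{\ell}]{u}^2
	= W_{\tau/h}\bigl(\tnorm[\fulldomain\setminus\Patch{\ell-1}{\omega}]{u}^2-\tnorm[\fulldomain\setminus\Patch{\ell}{\omega}]{u}^2\bigr).
\end{equation*}
Rearranging gives the contraction with $\gamma^2=W/(1+W)$, and induction finishes the proof. The main obstacle is the bookkeeping in the layer estimate: one must ensure that the $L^2$ parts carry the $4/\tau^2$ weight correctly, so that only a factor linear in $\tau/h$ appears in $W_{\tau/h}$ and not a factor $\tau^{-1}$. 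This is done by consistently pairing $h^{-1}\norm{\cdot}$ with $\tau^{-1}\norm{\cdot}$ as in the cross-term bound above.
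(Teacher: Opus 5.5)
Your proposal is correct and follows the paper's proof essentially step by step. Testing with $\NodalInt(\cutoff{\ell}\uCN{n})$ annihilates the right-hand side. The interpolation remainder and the cross term $(A\grad\uCN{n},\uCN{n}\grad\cutoff{\ell})$ live only on $\bLayer{\ell}$ and are bounded by $W_{\tau/h}\tnorm[\bLayer{\ell}]{\uCN{n}}^2$, and the hole-filling rearrangement, iteration, and Lemma~\ref{dslp:lem:CN-NormChangeAbsentData} finish the argument.

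The only difference is in how you bound the remainder. You use a Bramble--Hilbert estimate for a piecewise linear cutoff; the paper instead uses the cruder stability $\abs{(\id-\NodalInt)(\cutoff{\ell}\uCN{n})}_{H^m(K)}\leq\Cint\abs{\cutoff{\ell}\uCN{n}}_{H^m(K)}$ from Lemma~\ref{dslp:Lem:StabilityIhProductIh}, with the product rule and a weighted Young inequality. Both give $W_{\tau/h}=C(1+\tau/h)$. Note that your intermediate factor should read $h^{2-m}\norm[L^\infty(\fulldomain)]{\grad\cutoff{\ell}}$ rather than $h^{1-m}\norm[L^\infty(\fulldomain)]{\grad\cutoff{\ell}}$; the corrected factor is what your subsequent bound $\abs{u}_{H^1}^2+\tau^{-2}h\norm{u}\abs{u}_{H^1}$ actually requires.
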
 

\begin{proof}
	The case \(\ell = 0\) holds trivially. For \(\ell \geq 1\), let \(D \subseteq \fulldomain\) match \(\Th\) in the sense of \eqref{dslp:eq:D-matching-Th}. 
	For \(u_h \in \PkTh[], \phi \in H^1(\fulldomain)\) we write
	\begin{align}
		\label{dslp:eq:mathcalK-Def}
		\mathcal{K}_{D}(u_h, \phi)
		&\coloneqq  \sum_{K \in \Th[D]} \abil[K]{u_h, \phi} + \frac{4}{\tss} \bbil[K]{u_h, \phi}.
	\end{align}
    Next, we define a cutoff \(\cutoff{\ell} \in \PkTh\), \(0 \leq \cutoff{\ell} \leq 1\) with 
    \begin{equation}
        \label{dslp:eq:def:Cutoff}
        \cutoff{\ell} = \begin{cases*}
            0, \quad \text{in } \Patch{\ell-1}{\omega} \\
            1, \quad \text{in } \fulldomain \setminus \Patch{\ell}{\omega}
        \end{cases*}
        , \qquad \text{ and } \qquad \norm{\grad \cutoff{\ell}}_{L^{\infty}(\fulldomain)} \leq \CCutoff h^{-1} \;.
    \end{equation}
	Then, using this cutoff function \(\cutoff{\ell}\) we derive with the product rule 
	\begin{equation}
		\label{dslp:eq:CNlocalData:terms}
		\begin{aligned}
		\tnorm[\fulldomain \setminus \Patch{\ell}{\omega}]{\uCN{n}}^2 
		=& \; \mathcal{K}_{\fulldomain \setminus \Patch{\ell}{\omega}}(\uCN{n}, \uCN{n}) \\
		\leq& \;  \ip[\fulldomain]{ A \grad \uCN{n}, \cutoff{\ell}\grad \uCN{n}} + \frac{4}{\tss}\bbil[\fulldomain]{\uCN{n}, \cutoff{\ell}\uCN{n}}  \\
		=& \; \mathcal{K}_{\fulldomain}(\uCN{n}, \NodalInt (\cutoff{\ell}\uCN{n}))
			+\mathcal{K}_{\fulldomain}(\uCN{n},(\id - \NodalInt) (\cutoff{\ell}\uCN{n})) -  \ip[\fulldomain]{A \grad \uCN{n}, \uCN{n} \grad \cutoff{\ell}}.
		\end{aligned}
	\end{equation}
	We now treat each of the three occurring terms individually. 

	For the first term, we reformulate the CN scheme \eqref{dslp:eq:CNfirstOrderFormulation} to
	\begin{align*}
		\tss[4]\mathcal{K}_{\fulldomain}(\uCN{n}, w_h) 
		=& \; \bbil[\fulldomain]{\uCN{n-1}, w_h} - \tss[4] \abil[\fulldomain]{\uCN{n-1}, w_h} 
		+ \tau \bbil[\fulldomain]{\vCN{n-1}, w_h} + \tss[2] \bbil[\fulldomain]{\overline{f}_h^n, w_h},
	\end{align*}
	for all \(w_h \in \PkZeroTh[]\).
	The choice \(w_h = \NodalInt(\cutoff{\ell} \uCN{n}) \in \PkZeroTh[]\) is valid, but since \(\supp \NodalInt(\cutoff{\ell} \uCN{n}) \subseteq \fulldomain \setminus \Patch{\ell-1}{\omega}\) and by the assumption on the support of \(\uCN{n-1}, \vCN{n-1}, f_h^n\) and \(f_h^{n-1}\), we have 
	\begin{equation}
		\label{dslp:eq:CNlocalData:term1}
		\mathcal{K}_{\fulldomain}(\uCN{n}, \NodalInt (\cutoff{\ell}\uCN{n})) = 0 .
	\end{equation}

	Similarly, since 
	\begin{equation*}
		\supp (\id - \NodalInt)(\cutoff{\ell} \uCN{n}) \subseteq \bigcup_{K \in \bLayer{\ell}} K
	\end{equation*}
	also the second term from \eqref{dslp:eq:CNlocalData:terms} vanishes on every element \(K \in \Th \setminus \bLayer{\ell}\). 
	
	On \(K \in \bLayer{\ell}\), the Cauchy--Schwarz inequality, Lemma~\ref{dslp:Lem:StabilityIhProductIh}, the product rule, and \eqref{dslp:eq:def:Cutoff} yield
	\begin{align*}
		\mathcal{K}_{K}(\uCN{n},(\id - \NodalInt)(\cutoff{\ell} \uCN{n}))
		&\leq \Cint \beta^{\half}\Bigl( \norm[a,K]{\uCN{n}} \abs[1,K]{\cutoff{\ell}\uCN{n}} + \frac{4}{\tss} \norm[b,K]{\uCN{n}}\norm[K]{\cutoff{\ell}\uCN{n}}\Bigr)\\
		&\leq \Cint \frac{\beta^{\half}}{\alpha^{\half}} \Bigl( \frac{\CCutoff}{h} \norm[a,K]{\uCN{n}} \norm[b,K]{\uCN{n}} + \norm[a,K]{\uCN{n}}^2 + \frac{4}{\tss} \norm[b,K]{\uCN{n}}^2\Bigr).
	\end{align*} 
	We estimate the mixed term further with a weighted Young inequality, i.e., 
	\begin{equation}
        \label{dslp:eq:estimateMixedNormCutoffTerm}
		\norm[b,K]{\uCN{n}} \norm[a,K]{\uCN{n}} \leq \ts[4] \norm[a,K]{\uCN{n}}^2 + \frac{1}{\ts}\norm[b,K]{\uCN{n}}^2 = \ts[4] \tnorm[K]{\uCN{n}}^2,
	\end{equation}
	which together with the previous inequality combined on all \(K \in \bLayer{\ell}\) leads to
	\begin{equation}
		\label{dslp:eq:CNlocalData:term2}
		\mathcal{K}_{\fulldomain}(\uCN{n},(\id - \NodalInt)(\cutoff{\ell} \uCN{n})) \leq \Cint \frac{\beta^{\half}}{\alpha^{\half}} \Bigl(\frac{\CCutoff}{4}\frac{\tau}{h} + 1\Bigr)  \tnorm[\bLayer{\ell}]{\uCN{n}}^2.
	\end{equation}

    Lastly, since also
	\(\supp(\grad\cutoff{\ell}) \subseteq \bigcup_{K \in \bLayer{\ell}} K \),
	the last term from \eqref{dslp:eq:CNlocalData:terms} is estimated directly on the transition layer, using again \eqref{dslp:eq:estimateMixedNormCutoffTerm}, from which we obtain
	\begin{equation}
        \label{dslp:eq:CNlocalData:term3}
		\abs{\ip[\fulldomain]{A \grad \uCN{n}, \uCN{n} \grad \cutoff{\ell}}}
		\leq
		\frac{\beta^{\half}}{\alpha^{\half}} \frac{\CCutoff}{h}
		\sum_{K \in \bLayer{\ell}}\norm[a,K]{\uCN{n}}\norm[b,K]{\uCN{n}}
		\leq
        \frac{\beta^{\half}}{\alpha^{\half}} \frac{\CCutoff}{4} \frac{\tau}{h} 
		\tnorm[\bLayer{\ell}][\big]{\uCN{n}}^2.
	\end{equation}

	Applying \eqref{dslp:eq:CNlocalData:term1}, \eqref{dslp:eq:CNlocalData:term2}, \eqref{dslp:eq:CNlocalData:term3} to \eqref{dslp:eq:CNlocalData:terms} yields with \(\Cint > 1\)
    \begin{equation}
        \label{dslp:eq:defWtauhmin}
        \tnorm[\fulldomain \setminus \Patch{\ell}{\omega}]{\uCN{n}}^2 \leq 
        \Cint \frac{\beta^{\half}}{\alpha^{\half}} \Bigl(\frac{\CCutoff}{2}\frac{\tau}{h}  + 1\Bigr) 
        \tnorm[\bLayer{\ell}]{\uCN{n}}^2 
        \eqcolon W_{\tau/h} \tnorm[\bLayer{\ell}]{\uCN{n}}^2 .
    \end{equation}
    Using that
	\begin{equation*}
		\bLayer{\ell} = \Patch{\ell}{\omega} \setminus \Patch{\ell -1}{\omega} = \bigl(\Th[\fulldomain] \setminus \Patch{\ell -1}{\omega}\bigr)\setminus \bigl(\Th[\fulldomain] \setminus \Patch{\ell}{\omega}\bigr) ,
	\end{equation*}
    the inequality~\eqref{dslp:eq:defWtauhmin} can be rewritten to
    \begin{equation*}
        \tnorm[\fulldomain \setminus \Patch{\ell}{\omega}][\big]{\uCN{n}}^2 \leq  W_{\tau/h}  \Bigl( \tnorm[\fulldomain \setminus \Patch{\ell -1 }{\omega}][\big]{\uCN{n}}^2 -\tnorm[\fulldomain \setminus \Patch{\ell}{\omega}][\big]{\uCN{n}}^2 \Bigr),
    \end{equation*}
    which is equivalent to 
    \begin{equation*}
        \tnorm[\fulldomain \setminus \Patch{\ell}{\omega}][\big]{\uCN{n}}^2  \leq \gamma^2 \tnorm[\fulldomain \setminus \Patch{\ell-1}{\omega}][\big]{\uCN{n}}^2, 
	\end{equation*}
	with \(\gamma^2 = W_{\tau/h} / (1 + W_{\tau/h})\). 
    Iterating this estimate yields
	\begin{equation*}
		\tnorm[\fulldomain \setminus \Patch{\ell}{\omega}]{\uCN{n}}
		\leq
		\gamma^\ell
		\tnorm[\fulldomain \setminus \Patch{0}{\omega}]{\uCN{n}}.
	\end{equation*}
	Since \(\omega \subset \dom\Patch{0}{\omega}\), the right-hand side can be enlarged to
	\begin{equation*}
		\tnorm[\fulldomain \setminus \omega]{\uCN{n}}.
	\end{equation*}
	The final estimate follows from Lemma~\ref{dslp:lem:CN-NormChangeAbsentData}.
\end{proof}

Next, we prove a localization theorem in the spirit of \cite[Theorem 3.2]{GalM23} but restricted to one time step only. 
Here, we investigate the difference between a global CN approximation and one that is computed only on the domain associated with the patch \(\Patch{\ell}{\omega}\) around the area \(\omega\), where the data is supported.
Let~\(\solCNloc{n} = \pmatT{\uCNloc{n}\;\vCNloc{n}}\) be the CN approximation based on the given data on \(\omega\), but computed locally on \(\dom \Patch{\ell}{\omega}\) with homogeneous boundary conditions on the artificial boundary faces. 
We then extend this approximation by zero to \(\fulldomain\) to compare it to the global CN approximation \(\solCN{n}\).
We have the following localization result.

\begin{lemma}[CN localization error]
	\label{dslp:Lem:localizedCN}
	Assume \(\uCN{n-1}, \vCN{n-1}, f_h^n, f_h^{n-1}\) to be supported in \(\omega \subset \fulldomain\).  Then, for \(\ell \in \mathbb{N}\), 
	\begin{equation}
        \label{dslp:Lem:localizedCN:equation}
		\begin{aligned}
			\tnorm[\fulldomain][\big]{\uCN{n} - \uCNloc{n}} 
			&\leq C \max \monosetc*{\frac{\tau}{h},1}\gamma^{\ell} \tnorm[\fulldomain \setminus \omega][\big]{\uCN{n}} \\
			&\leq C \max \monosetc*{\frac{\tau}{h},1}\gamma^{\ell} \norm[X_h(\fulldomain \setminus \omega)][\big]{\solCN{n}},
		\end{aligned}
	\end{equation}
	with \(\gamma\) from Lemma~\ref{dslp:lem:CNlocalizedData} and a constant \(C>0\), which only depends on the material bounds \(\alpha, \beta\), the polynomial degree \(k\), the spatial dimension, and the shape regularity of \(\Th\).
\end{lemma}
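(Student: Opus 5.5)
Since $\uCN{n}$ and $\uCNloc{n}$ come from CN steps with the same data, the plan is to write $e \coloneqq \uCN{n}-\uCNloc{n}$ and use a Galerkin orthogonality in the $\mathcal{K}$-form, followed by a cutoff argument and Lemma~\ref{dslp:lem:CNlocalizedData}. As in the proof of Lemma~\ref{dslp:lem:CNlocalizedData}, the CN step in increment form reads $\tss[4]\mathcal{K}_{\fulldomain}(\uCN{n},w_h)=F(w_h)$ for all $w_h\in\PkZeroTh[]$. The local approximation satisfies the same identity for all $w_h\in\PkZeroTh[\dom\Patch{\ell}{\omega}]$, with the identical right-hand side, since that right-hand side only involves data supported in $\omega$. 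Hence $\mathcal{K}_{\fulldomain}(e,w_h)=0$ for every $w_h$ supported in $\Patch{\ell}{\omega}$. Because $\mathcal{K}_{\fulldomain}$ is the inner product inducing $\tnorm{\cdot}$, this makes $\uCNloc{n}$ the $\mathcal{K}$-orthogonal projection of $\uCN{n}$ onto the local space. Consequently, for every $z_h\in\PkZeroTh[\dom\Patch{\ell}{\omega}]$,
\[
\tnorm{e}\leq \tnorm{\uCN{n}-z_h}.
\]

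For the competitor, take the cutoff $\cutoff{\ell}$ from \eqref{dslp:eq:def:Cutoff}, which vanishes on $\Patch{\ell-1}{\omega}$ and equals one outside $\Patch{\ell}{\omega}$, and set
\[
z_h=\NodalInt\bigl((1-\cutoff{\ell})\uCN{n}\bigr).
\]
This function vanishes on the artificial boundary of $\Patch{\ell}{\omega}$ and lies in the local space. Its error is $\uCN{n}-z_h=\NodalInt(\cutoff{\ell}\uCN{n})$, which is supported in $\fulldomain\setminus\Patch{\ell-1}{\omega}$. On cells outside $\Patch{\ell}{\omega}$ it coincides with $\uCN{n}$. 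On the layer $\bLayer{\ell}$ I would use Lemma~\ref{dslp:Lem:StabilityIhProductIh} with $m=0,1$, the product rule, and $\norm{\grad\cutoff{\ell}}_{L^\infty}\leq\CCutoff h^{-1}$. This gives
\[
\norm[a,K]{\NodalInt(\cutoff{\ell}\uCN{n})}\lesssim \norm[a,K]{\uCN{n}}+h^{-1}\norm[b,K]{\uCN{n}}.
\]
The $h^{-1}\norm[b,K]{\cdot}$ term is bounded by $\frac{\tau}{2h}\cdot\frac{2}{\tau}\norm[b,K]{\uCN{n}}\leq \frac{\tau}{2h}\tnorm[K]{\uCN{n}}$, which is where the factor $\max\{\tau/h,1\}$ comes from. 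The $L^2$ part of the $\tnorm{\cdot}$-norm is controlled by $\Cint\norm[K]{\uCN{n}}$. The material bounds $\alpha,\beta$ enter only through the constants.

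Altogether this yields
\[
\tnorm{e}\leq C\max\{\tau/h,1\}\,\tnorm[\fulldomain\setminus\Patch{\ell-1}{\omega}]{\uCN{n}}.
\]
Lemma~\ref{dslp:lem:CNlocalizedData} applied with $\ell-1$ then gives the factor $\gamma^{\ell-1}\tnorm[\fulldomain\setminus\omega]{\uCN{n}}$. I absorb $\gamma^{-1}=\bigl((1+W_{\tau/h})/W_{\tau/h}\bigr)^{1/2}\leq(1+1/C)^{1/2}$ into $C$; this bound is uniform because $W_{\tau/h}\geq C$. The second inequality in \eqref{dslp:Lem:localizedCN:equation} is then immediate from Lemma~\ref{dslp:lem:CN-NormChangeAbsentData}.

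The main obstacle is the step that uses $F(w_h)$ agreeing for the global and the local problem. This requires the local scheme to be the CN increment formulation on $\dom\Patch{\ell}{\omega}$ with right-hand side built from data supported in $\omega\subset\Patch{0}{\omega}$, so that all data terms, including $\abil{\uCN{n-1},w_h}$, coincide for locally supported test functions. I would verify this carefully from \eqref{dslp:eq:CN-increment-formulation}, noting that $\ell\geq1$ keeps $\supp\uCN{n-1}$ strictly inside the local domain. The layer estimate must also track $\tau/h$ at most linearly, which the Young-type splitting above ensures.
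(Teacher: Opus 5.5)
Your proposal is correct and follows essentially the paper's own argument: $\mathcal{K}$-Galerkin orthogonality of $\uCN{n}-\uCNloc{n}$ against locally supported test functions, the competitor $\NodalInt\bigl((1-\cutoff{\ell})\uCN{n}\bigr)$, the layer bound via Lemma~\ref{dslp:Lem:StabilityIhProductIh}, and then Lemma~\ref{dslp:lem:CNlocalizedData} with $\ell-1$. The only cosmetic difference is how the missing factor $\gamma$ is recovered: you bound $\gamma^{-1}$ uniformly, while the paper writes $W_{\tau/h}=W_{\tau/h}^{1/2}(1+W_{\tau/h})^{1/2}\gamma$, and these amount to the same estimate.
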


\begin{proof}
    For a test function \(w_h \in \PkZeroTh[]\) with \(\supp w_h \subseteq \Patch{\ell}{\omega}\) we can subtract the weak formulations of \(\solCN{n}\) and \(\solCNloc{n}\), which yields 
	\begin{equation}
		\begin{aligned}
		\bbil[\fulldomain]{\uCN{n}- \uCNloc{n}, w_h} 
		&=\ts[2] \bbil[\fulldomain]{\vCN{n}- \vCNloc{n}, w_h} ,\\
		\bbil[\fulldomain]{\vCN{n}- \vCNloc{n}, w_h} 
		&= - \ts[2] \abil[\fulldomain]{\uCN{n} - \uCNloc{n}, w_h}.
		\end{aligned}
	\end{equation}
	By combining both lines we get 
	\begin{equation}
		\label{dslp:eq:localizedDifferenceWeakForm}
		\bbil[\fulldomain]{\uCN{n}- \uCNloc{n}, w_h} + \tss[4] \abil[\fulldomain]{\uCN{n} - \uCNloc{n}, w_h} = 0 .
	\end{equation}

	Thus, since \(\supp \uCNloc{n} \subseteq \Patch{\ell}{\omega}\), we can also write 
	\begin{align*} 
		\tnorm[\fulldomain][\big]{\uCN{n} -\uCNloc{n}}^2 
		&= \abil[\fulldomain]{\uCN{n} - \uCNloc{n}, \uCN{n} - \uCNloc{n}} + \mfrac{4}{\tss} \bbil[\fulldomain]{\uCN{n}- \uCNloc{n}, \uCN{n}- \uCNloc{n}} \\
		&= \abil[\fulldomain]{\uCN{n} - \uCNloc{n}, \uCN{n} - w_h} + \mfrac{4}{\tss} \bbil[\fulldomain]{\uCN{n}- \uCNloc{n}, \uCN{n}- w_h} .
	\end{align*}

    Next, we choose \(w_h = \NodalInt((1- \cutoff{\ell})\uCN{n})\) with \(\cutoff{\ell}\) as defined in \eqref{dslp:eq:def:Cutoff}, such that indeed \(\supp w_h \subseteq \Patch{\ell}{\omega} \).
	Then, by the discrete Cauchy--Schwarz inequality and the identity \(\uCN{n} - \NodalInt((1- \cutoff{\ell})\uCN{n}) = 0\) on \(\Patch{\ell -1}{\omega}\), we have 
    \begin{align}
		\label{dslp:eq:localizedDiff:CSI}
		\nonumber \tnorm[\fulldomain][\big]{\uCN{n} - \uCNloc{n}}^2 
		\leq& \; \norm[a, \fulldomain][\big]{\uCN{n} - \uCNloc{n}} 
		\norm[a, \fulldomain \setminus \Patch{\ell-1}{\omega}][\big]{\uCN{n}- \NodalInt((1- \cutoff{\ell})\uCN{n})} \\
		\nonumber &+ \mfrac{2}{\ts} \norm[b,\fulldomain][\big]{\uCN{n} - \uCNloc{n}} \,
		\mfrac{2}{\ts} \norm[b, \fulldomain \setminus \Patch{\ell-1}{\omega}][\big]{\uCN{n}- \NodalInt((1- \cutoff{\ell})\uCN{n})} \\
		\leq& \; \tnorm[\fulldomain][\big]{\uCN{n} - \uCNloc{n}}
		\tnorm[\fulldomain \setminus \Patch{\ell-1}{\omega}][\big]{\uCN{n}- \NodalInt((1- \cutoff{\ell})\uCN{n})}.
	\end{align}
    We can simplify the second factor of \eqref{dslp:eq:localizedDiff:CSI} and get
	\begin{align*}
		\tnorm[\fulldomain \setminus \Patch{\ell-1}{\omega}][\big]{\uCN{n}- \NodalInt((1- \cutoff{\ell})\uCN{n})} = \tnorm[\fulldomain \setminus \Patch{\ell-1}{\omega}][\big]{\NodalInt(\cutoff{\ell}\uCN{n})}.
	\end{align*}
    We again apply Lemma~\ref{dslp:Lem:StabilityIhProductIh} to obtain 
	\begin{equation*}
		\norm[\fulldomain \setminus \Patch{\ell -1}{\omega}]{\NodalInt(\cutoff{\ell}\uCN{n})} \leq \Cint \frac{\beta^{\half}}{\alpha^{\half}} \norm[\fulldomain \setminus \Patch{\ell -1}{\omega}]{\uCN{n}},
	\end{equation*}
	and by the product rule and \eqref{dslp:eq:def:Cutoff} also 
	\begin{equation*}
		\norm[a,\fulldomain \setminus \Patch{\ell -1}{\omega}]{\NodalInt(\cutoff{\ell}\uCN{n})}
		\leq \Cint \frac{\beta^{\half}}{\alpha^{\half}} \Bigl( \frac{\CCutoff}{h} \norm[\fulldomain \setminus \Patch{\ell - 1}{\omega}]{\uCN{n}} +  \norm[a,\fulldomain \setminus \Patch{\ell -1}{\omega}]{\uCN{n}} \Bigr).
	\end{equation*} 
	Together, these bounds yield
	\begin{equation}
		\label{dslp:eq:boundTnormCutoffFunction}
			\tnorm[\fulldomain \setminus \Patch{\ell-1}{\omega}][\big]{\NodalInt(\cutoff{\ell}\uCN{n})} \leq 
		\Cint \frac{\beta^{\half}}{\alpha^{\half}} \Bigl( 1+ \frac{\CCutoff}{2} \frac{\tau}{h}\Bigr) \tnorm[\fulldomain \setminus \Patch{\ell -1}{\omega}]{\uCN{n}}.
	\end{equation}
	We can thus combine \eqref{dslp:eq:boundTnormCutoffFunction} with \eqref{dslp:eq:localizedDiff:CSI} and the definition of \(W_{\tau/h}\) from~\eqref{dslp:eq:defWtauhmin} to obtain 
	\begin{equation*}
		\begin{aligned}
			\tnorm[\fulldomain][\big]{\uCN{n} - \uCNloc{n}} 
			\leq W_{\tau/h} \tnorm[\fulldomain \setminus \Patch{\ell -1}{\omega}]{\uCN{n}} = W_{\tau/h}^{\half} ( 1+ W_{\tau/h})^{\half} \gamma \tnorm[\fulldomain \setminus \Patch{\ell -1}{\omega}]{\uCN{n}} 
		\end{aligned}
	\end{equation*}
	with \(\gamma = W_{\tau/h}^{\half} ( 1+ W_{\tau/h})^{-\half} \) as in Lemma~\ref{dslp:lem:CNlocalizedData}.
	In the case \(\tau \geq h\), which is mostly of interest in this work\footnote{All results remain valid also in the case \(\tau<h\), where the factor \(\tau/h\) can be bounded by \(1\). This regime is less relevant for the present discussion, since an explicit CFL restriction would typically already be satisfied.}, we can further bound 
	\begin{equation*}
		\begin{aligned}
			\tnorm[\fulldomain][\big]{\uCN{n} - \uCNloc{n}} 
			&\leq W_{\tau/h}^{\half} ( 1+ W_{\tau/h})^{\half} \gamma \tnorm[\fulldomain \setminus \Patch{\ell -1}{\omega}]{\uCN{n}} \\
			&\leq C \frac{\tau}{h} \gamma  \tnorm[\fulldomain \setminus \Patch{\ell -1}{\omega}]{\uCN{n}},
		\end{aligned}
	\end{equation*}
	with a constant \(C> 0\) which only depends on the polynomial degree \(k\), the spatial dimension, the material bounds \(\alpha, \beta\) and the shape regularity of \(\Th\).  
	If \(\tau < h\) we just bound \(\tau / h\) by \(1\).
	Together with Lemma~\ref{dslp:lem:CNlocalizedData} and Lemma~\ref{dslp:lem:CN-NormChangeAbsentData} this yields the claim.
\end{proof}

Lemma~\ref{dslp:lem:CNlocalizedData} and Lemma~\ref{dslp:Lem:localizedCN} provide the two main mechanisms used below to bound the prediction error.
First, if we are separated from the support of the data, as in Lemma~\ref{dslp:lem:CNlocalizedData}, we obtain a layer-wise contraction until we reach the data.
Second, if we compare a globally computed CN approximation with one computed only on a patch around the support of the data, we obtain decay with respect to the number of layers in this patch, as in Lemma~\ref{dslp:Lem:localizedCN}.

Both mechanisms enter the prediction error bound in Lemma~\ref{dslp:lem:TestLocalizedPredictionEstimateNew}, which we prove in Section~\ref{dslp:subsec:localizedPrediction:EstimatesPrediction}.
Before proving this result, we transfer these findings to the CN increments underlying the localized prediction from Section~\ref{dslp:sec:method}.

\subsection{Variationally localized increments}
\label{dslp:subsec:localizedPrediction:VariationalLocalization}

Inspired by \cite{GalM23}, Lemma~\ref{dslp:lem:CNlocalizedData} and Lemma~\ref{dslp:Lem:localizedCN} established two localization mechanisms, adjusted to the first-order formulation of a standard CN step with localized data.
We keep these results separate because they isolate the basic decay arguments in the simplest setting. 

For the localized prediction used in the domain splitting method, however, the data are not localized directly.
Instead, the prediction is formulated for the CN increment, and localization is introduced through the right-hand side by applying a cutoff to the test function.
This new localization technique is crucial because it preserves the variational structure of the increment problem and localizes only the corresponding right-hand side. 
This allows us to estimate the prediction error in the discrete energy norm \(\norm[X_h]{\cdot}\). 
A direct localization of the data would instead lead to estimates in the \(\tnorm{\cdot}\)-norm, and the required conversion to the \(X_h\)-norm would introduce a factor of order~\(\tau^{-1}\).

Within this section, we transfer the preceding localization mechanisms to this increment formulation. 
This is the setting needed for the prediction step in Section~\ref{dslp:sec:method}: the cutoff selects the part of the increment right-hand side associated with one prediction interface, and the resulting localized increment is then computed only on a patch around that interface.

Throughout this section, let \(\Lambda \in \PkTh\) satisfy \(0 \leq \Lambda \leq 1\) and \(\norm[L^{\infty}(\fulldomain)]{\grad \Lambda} \leq C_{\Lambda} h^{-1}\) with a constant~\(C_{\Lambda}\) that only depends on the shape regularity of the mesh. 
Recall the increment formulation of the CN method from \eqref{dslp:eq:CN-increment-formulation}. We first define \(\delta u_\Lambda^n \in \PkZeroTh\) by
\begin{equation}
	\label{dslp:eq:TestLocalizedIncrement}
	\frac{\tau^2}{4}\mathcal{K}_{\fulldomain}(\delta u_\Lambda^n,w_h)
	=
	\tau\bbil[\fulldomain]{\vCN{n-1},\NodalInt(\Lambda w_h)}
	-\frac{\tau^2}{2}\abil[\fulldomain]{\uCN{n-1},\NodalInt(\Lambda w_h)}
	+\frac{\tau^2}{2}\bbil[\fulldomain]{\overline{f}_h^n,\NodalInt(\Lambda w_h)}
\end{equation}
for all \(w_h\in \PkZeroTh\) with \(\mathcal{K}_{\fulldomain}(\cdot,\cdot)\) as defined in \eqref{dslp:eq:mathcalK-Def}. Note that \(\delta u_\Lambda^n\) is still computed on \(\fulldomain\). 

Thus, we also define a locally computed increment on 
\begin{equation*}
    D_\Lambda^\ell \coloneqq \dom \; \Patch{\ell}{\supp \Lambda} =  \interior \bigcup_{K \in \Patch{\ell}{\supp \Lambda}} K,
\end{equation*}
which is denoted by~\(\widehat{\delta}u^n_{\Lambda}\in \PkZeroTh[D_\Lambda^\ell] \) and defined by
\begin{equation}
	\label{dslp:eq:TestLocalizedIncrementLocalCompute}
	\begin{aligned}
	\frac{\tau^2}{4} \mathcal{K}_{D_\Lambda^\ell}\!(\widehat{\delta} u_\Lambda^n,w_h)
	=  
	\tau\bbil[D_\Lambda^\ell]{\vCN{n-1},\NodalInt(\Lambda w_h)}
	-\frac{\tau^2}{2}\abil[D_\Lambda^\ell]{\uCN{n-1},\NodalInt(\Lambda w_h)} 
	+\frac{\tau^2}{2}\bbil[D_\Lambda^\ell]{\overline{f}_h^n,\NodalInt(\Lambda w_h)}
	\end{aligned}
\end{equation}
for all test functions \(w_h \in \PkZeroTh[D_\Lambda^\ell]\). 
The goal of this subsection is to derive an estimate for~\(\tnorm[\fulldomain]{\delta u_\Lambda^n-\widehat{\delta}u^n_{\Lambda}}\).
To start, we state a stability result for \(\delta u_\Lambda^n\).

\begin{lemma}[Stability of localized increments]
	\label{dslp:lem:TestLocalizedIncrementStability}
	For \(\delta u_\Lambda^n\) from \eqref{dslp:eq:TestLocalizedIncrement}, we have
	\begin{equation*}
		\tnorm[\fulldomain]{\delta u_\Lambda^n}
		\leq
			C \Bigl(1 + \frac{\tau}{h}\Bigr)
			\left(
				\norm[X_h(\fulldomain)][\Big]{\pmat{\uCN{n-1}\\\vCN{n-1}}}
				+\tau\norm[b, \fulldomain][\big]{\overline{f}_h^n}
			\right),
	\end{equation*}
	with a constant \(C>0\) that only depends on the material bounds \(\alpha, \beta,\) the polynomial degree \(k\), the spatial dimension, and the shape regularity of the mesh.
\end{lemma}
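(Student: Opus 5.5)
We test the defining equation \eqref{dslp:eq:TestLocalizedIncrement} with \(w_h=\delta u_\Lambda^n\) and bound the right-hand side by \(\tnorm{\delta u_\Lambda^n}\) times data, then divide. The left-hand side becomes \(\frac{\tau^2}{4}\tnorm[\fulldomain]{\delta u_\Lambda^n}^2\). The right-hand side has three terms. We control them through the stability of the cut-off test function \(z_h\coloneqq\NodalInt(\Lambda\,\delta u_\Lambda^n)\), which by construction lies in \(\PkZeroTh\).

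First, we show
\[
\tnorm[\fulldomain]{z_h}\le C\Bigl(1+\frac{\tau}{h}\Bigr)\tnorm[\fulldomain]{\delta u_\Lambda^n}.
\]
The argument is elementwise and mirrors \eqref{dslp:eq:boundTnormCutoffFunction}. Lemma~\ref{dslp:Lem:StabilityIhProductIh} with \(m=0\) and \(0\le\Lambda\le1\) gives \(\norm[b,K]{z_h}\le \Cint(\beta/\alpha)^{1/2}\norm[b,K]{\delta u_\Lambda^n}\). For \(m=1\), the product rule and \(\norm[L^\infty(\fulldomain)]{\grad\Lambda}\le C_\Lambda h^{-1}\) give
\[
\norm[a,K]{z_h}\le C\bigl(\norm[a,K]{\delta u_\Lambda^n}+h^{-1}\norm[b,K]{\delta u_\Lambda^n}\bigr).
\]
We then write \(h^{-1}\norm[b]{\cdot}=\frac{\tau}{2h}\cdot\frac{2}{\tau}\norm[b]{\cdot}\). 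Summing over \(K\) yields the claimed bound.

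Next come the three right-hand side terms, bounded by Cauchy--Schwarz.
\begin{itemize}
\item The \(v\)-term: \(\tau\,\bbil[\fulldomain]{\vCN{n-1},z_h}\le \tau\norm[b]{\vCN{n-1}}\norm[b]{z_h}\le \frac{\tau^2}{2}\norm[b]{\vCN{n-1}}\tnorm{z_h}\), using \(\norm[b]{z_h}\le\frac{\tau}{2}\tnorm{z_h}\).
\item The forcing term: similarly, \(\frac{\tau^2}{2}\bbil[\fulldomain]{\overline f_h^n,z_h}\le\frac{\tau^3}{4}\norm[b]{\overline f_h^n}\tnorm{z_h}\).
\item The \(u\)-term: \(\frac{\tau^2}{2}\abil[\fulldomain]{\uCN{n-1},z_h}\le\frac{\tau^2}{2}\norm[a]{\uCN{n-1}}\norm[a]{z_h}\le \frac{\tau^2}{2}\norm[a]{\uCN{n-1}}\tnorm{z_h}\).
\end{itemize}
Every term thus carries a factor \(\tau^2\), which matches the left-hand side. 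After dividing by \(\frac{\tau^2}{4}\tnorm{\delta u_\Lambda^n}\) and inserting the bound on \(\tnorm{z_h}\), we obtain
\[
\tnorm{\delta u_\Lambda^n}\le C\Bigl(1+\frac{\tau}{h}\Bigr)\Bigl(\norm[a]{\uCN{n-1}}+\norm[b]{\vCN{n-1}}+\tau\norm[b]{\overline f_h^n}\Bigr).
\]
Bounding \(\norm[a]{\uCN{n-1}}+\norm[b]{\vCN{n-1}}\le\sqrt2\,\norm[X_h]{(\uCN{n-1},\vCN{n-1})}\) then gives the statement.

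The main obstacle is the cutoff stability step. Controlling \(\norm[a]{z_h}\) produces the gradient of \(\Lambda\), hence an \(h^{-1}\norm[b]{\delta u_\Lambda^n}\) term, and this can only be absorbed through the \(\frac{2}{\tau}\)-weighted \(L^2\) part of \(\tnorm{\cdot}\). That absorption is exactly where the factor \(1+\tau/h\) enters. Apart from that step, we need to check that the scaling in \(\tau\) is balanced, so that no negative power of \(\tau\) appears. This works because the \(v\)- and \(f\)-terms are paired with \(\norm[b]{z_h}\), which costs only \(\tau/2\), while the \(u\)-term is paired with the \(a\)-norm.
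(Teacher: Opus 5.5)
Your proposal is correct and follows the paper's proof. You test \eqref{dslp:eq:TestLocalizedIncrement} with \(w_h=\delta u_\Lambda^n\), bound \(\NodalInt(\Lambda w_h)\) via Lemma~\ref{dslp:Lem:StabilityIhProductIh}, the product rule and \(\norm[L^\infty(\fulldomain)]{\grad\Lambda}\le C_\Lambda h^{-1}\) (absorbing \(h^{-1}\norm[b]{\cdot}\) into the \(\frac{2}{\tau}\)-weighted part of \(\tnorm{\cdot}\)), apply Cauchy--Schwarz, and divide by \(\frac{\tau^2}{4}\tnorm{\delta u_\Lambda^n}\); the only cosmetic difference is that the paper bounds \(\norm[b]{\NodalInt(\Lambda w_h)}\) directly by a multiple of \(\frac{\tau}{2}\tnorm{w_h}\) without the factor \(1+\tau/h\), whereas you route all three terms through \(\tnorm{z_h}\), which is harmless since the final bound carries that factor anyway.
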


\begin{proof}
    Lemma~\ref{dslp:Lem:StabilityIhProductIh}, the product rule, and the bound on \(\grad\Lambda\) yield
	\begin{equation}
		\label{dslp:eq:TestLocalizedProductBound}
        \begin{aligned}
            \norm[b,\fulldomain]{\NodalInt(\Lambda w_h)}
		    &\leq  \Cint\frac{\beta^{\half}}{\alpha^{\half}} \norm[b, \supp \Lambda]{w_h} \leq  \Cint\frac{\beta^{\half}}{\alpha^{\half}} \ts[2] \tnorm[\supp \Lambda]{w_h}, \\
            \norm[a, \fulldomain]{\NodalInt(\Lambda w_h)}
            &\leq \Cint \frac{\beta^{\half}}{\alpha^{\half}}   \Bigl(
            \norm[a, \supp \Lambda]{w_h}+\frac{C_{\Lambda}}{h} \norm[b,\supp \Lambda]{w_h} \Bigr) \\
            &\leq \Cint \frac{\beta^{\half}}{\alpha^{\half}} \Bigl( 1+ \frac{C_{\Lambda}}{2} \ts[h]\Bigr) \tnorm[\supp \Lambda]{w_h}.
        \end{aligned}
	\end{equation}
    Testing \eqref{dslp:eq:TestLocalizedIncrement} with \(w_h = \delta u_\Lambda^n \), using the Cauchy--Schwarz inequality, and \eqref{dslp:eq:TestLocalizedProductBound}, we obtain
    \begin{align*}
        \tss[4] \tnorm[\fulldomain]{\delta u_\Lambda^n}^2
        &= \frac{\tau^2}{4}\mathcal{K}_{\fulldomain}(\delta u_\Lambda^n,\delta u_\Lambda^n) \\
        &\leq \Cint \frac{\beta^{\half}}{\alpha^{\half}} \Bigl( \tss[2] \norm[b,\fulldomain]{\vCN{n-1}} + \tss[2] \Bigl( 1+ \frac{C_{\Lambda}}{2} \ts[h]\Bigr) \norm[a,\fulldomain]{\uCN{n-1}} + \frac{\tau^3}{4} \norm[b,\fulldomain]{\overline{f}_h^n}  \Bigr)\tnorm[\fulldomain]{\delta u_\Lambda^n}.
    \end{align*}
    Dividing by \(\tss[4] \tnorm[\fulldomain]{\delta u_\Lambda^n}\) and rearranging the constants yields the result.
\end{proof}

Next, we show a decay result similar to Lemma~\ref{dslp:lem:CNlocalizedData} for the global CN increment~\(\delta u_\Lambda^n\).
\begin{lemma}[Decay of localized increments]
	\label{dslp:lem:TestLocalizedIncrementDecay}
	Let \(\delta u_\Lambda^n\) be defined by \eqref{dslp:eq:TestLocalizedIncrement}.  Then, for \(\ell \in \mathbb{N}_0\),
	\begin{equation*}
		\tnorm[\fulldomain\setminus D_\Lambda^\ell]{\delta u_\Lambda^n}
		\leq
			C \Bigl(1 + \frac{\tau}{h}\Bigr)\gamma^\ell
			\left(
				\norm[X_h(\fulldomain)][\Big]{\pmat{\uCN{n-1}\\\vCN{n-1}}}
				+\tau\norm[b,\fulldomain][\big]{\overline{f}_h^n}
			\right),
	\end{equation*}
	with the same \(\gamma\) as in Lemma~\ref{dslp:lem:CNlocalizedData}. The constant \(C\) depends only on the material bounds \(\alpha, \beta \), the polynomial degree \(k\), the spatial dimension, and the shape regularity of the mesh.
\end{lemma}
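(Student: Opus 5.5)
The plan is to repeat the cutoff argument from the proof of Lemma~\ref{dslp:lem:CNlocalizedData}, with \(\omega\) replaced by \(\supp\Lambda\), and then close with the stability bound of Lemma~\ref{dslp:lem:TestLocalizedIncrementStability}. The case \(\ell=0\) is exactly Lemma~\ref{dslp:lem:TestLocalizedIncrementStability}, since \(\tnorm[\fulldomain\setminus D_\Lambda^0]{\cdot}\leq\tnorm[\fulldomain]{\cdot}\).

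For \(\ell\geq1\), I take the cutoff \(\cutoff{\ell}\) from \eqref{dslp:eq:def:Cutoff} with \(\omega=\supp\Lambda\). It vanishes on \(\Patch{\ell-1}{\supp\Lambda}\), equals one outside \(\Patch{\ell}{\supp\Lambda}\), and has gradient bounded by \(\CCutoff h^{-1}\). As in \eqref{dslp:eq:CNlocalData:terms}, I write
\begin{equation*}
\tnorm[\fulldomain\setminus D_\Lambda^\ell]{\delta u_\Lambda^n}^2 \leq \mathcal K_\fulldomain\bigl(\delta u_\Lambda^n,\NodalInt(\cutoff{\ell}\delta u_\Lambda^n)\bigr) + \mathcal K_\fulldomain\bigl(\delta u_\Lambda^n,(\id-\NodalInt)(\cutoff{\ell}\delta u_\Lambda^n)\bigr) - \ip[\fulldomain]{A\grad\delta u_\Lambda^n,\delta u_\Lambda^n\grad\cutoff{\ell}}.
\end{equation*}

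The crucial observation concerns the first term. Test \eqref{dslp:eq:TestLocalizedIncrement} with \(w_h=\NodalInt(\cutoff{\ell}\delta u_\Lambda^n)\in\PkZeroTh\). The right-hand side then involves \(\NodalInt(\Lambda w_h)\), which is nodewise the product of the nodal values of \(\Lambda\) and \(w_h\). The nodal values of \(w_h\) vanish at all nodes of elements in \(\Patch{\ell-1}{\supp\Lambda}\supseteq\Patch{0}{\supp\Lambda}\), while \(\Lambda\) vanishes at every node outside \(\supp\Lambda\). Hence \(\NodalInt(\Lambda w_h)=0\) for \(\ell\geq1\), and the first term vanishes.

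This is exactly where localizing through the test function pays off. No support assumption on \(\uCN{n-1},\vCN{n-1},\overline f_h^n\) is needed, which is why the statement allows global data. I expect the only delicate point to be the nodal bookkeeping: a node on the boundary of \(\supp\Lambda\) where \(\Lambda\neq0\) could fail to belong to \(\Patch{0}{\supp\Lambda}\). I will argue via the closed-cell convention that every node with \(\Lambda(a)\neq0\) lies in a cell of \(\Patch{0}{\supp\Lambda}\), so \(w_h(a)=0\) there.

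The remaining two terms are supported on the layer \(\bLayer{\ell}\). I estimate them verbatim as in \eqref{dslp:eq:CNlocalData:term2} and \eqref{dslp:eq:CNlocalData:term3}, using Lemma~\ref{dslp:Lem:StabilityIhProductIh} and the Young inequality \eqref{dslp:eq:estimateMixedNormCutoffTerm}. This gives
\begin{equation*}
\tnorm[\fulldomain\setminus\Patch{\ell}{\supp\Lambda}]{\delta u_\Lambda^n}^2\leq W_{\tau/h}\,\tnorm[\bLayer{\ell}]{\delta u_\Lambda^n}^2,
\end{equation*}
with the same \(W_{\tau/h}\) as in \eqref{dslp:eq:defWtauhmin}. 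Rewriting the layer norm as a difference of exterior norms yields the contraction
\begin{equation*}
\tnorm[\fulldomain\setminus D_\Lambda^\ell]{\delta u_\Lambda^n}\leq\gamma\,\tnorm[\fulldomain\setminus D_\Lambda^{\ell-1}]{\delta u_\Lambda^n}.
\end{equation*}
This argument is valid for every \(\ell\geq1\), since the vanishing of the first term used only \(\ell\geq1\).

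Iterating down to \(\ell=0\) gives \(\tnorm[\fulldomain\setminus D_\Lambda^\ell]{\delta u_\Lambda^n}\leq\gamma^\ell\,\tnorm[\fulldomain]{\delta u_\Lambda^n}\). Applying Lemma~\ref{dslp:lem:TestLocalizedIncrementStability} then yields the claim with the factor \(C(1+\tau/h)\).
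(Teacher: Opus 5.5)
Your proposal is correct and matches the paper's proof essentially step by step. It uses the same cutoff \(\cutoff{\ell}\) with \(\omega=\supp\Lambda\), the same three-term splitting, and the same reason the first term vanishes: all nodal values \(\Lambda(a)\cutoff{\ell}(a)\delta u_\Lambda^n(a)\) of \(\NodalInt(\Lambda w_h)\) are zero. It then has the same layer estimates yielding \(W_{\tau/h}\), the contraction by \(\gamma\), and the final appeal to Lemma~\ref{dslp:lem:TestLocalizedIncrementStability}. Your closed-cell resolution of the boundary-node concern is exactly what the paper relies on implicitly when it asserts that \(\cutoff{\ell}(a_j)=0\) whenever \(\Lambda(a_j)\neq0\).
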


\begin{proof}
    The proof is very similar to the one from Lemma~\ref{dslp:lem:CNlocalizedData}. The case \(\ell = 0\) holds trivially. 
    For~\(\ell \geq 1\), we introduce a cutoff \(\cutoff{\ell}\) as in
    \eqref{dslp:eq:def:Cutoff}, now with \(\omega=\supp\Lambda\).  
    
    Thus,
    \(\cutoff{\ell}=0\) on~\(K\in\Patch{\ell-1}{\supp\Lambda}\),
    \(\cutoff{\ell}=1\) on
    \(K \in \Th\setminus \Patch{\ell}{\supp \Lambda}\), and~\(\norm[L^\infty]{\grad\cutoff{\ell}}\leq \CCutoff h^{-1}\).
    With \(z_h=\delta u_\Lambda^n\), we insert
    \(\NodalInt(\cutoff{\ell}z_h)\) and use
    \(\grad(\cutoff{\ell}z_h)=\cutoff{\ell}\grad z_h+z_h\grad\cutoff{\ell}\)
    to get
    \begin{equation}
        \label{dslp:eq:IncrementCNlocalData:terms}
            \tnorm[\fulldomain \setminus D_\Lambda^\ell]{z_h}^2
            \leq  
            \mathcal{K}_{\fulldomain}\bigl(z_h, \NodalInt (\cutoff{\ell} z_h)\bigr)
            +  \mathcal{K}_{\fulldomain}\bigl(z_h,\bigl(\id - \NodalInt\bigr)  (\cutoff{\ell} z_h)\bigr) 
            - \ip[\fulldomain]{A \grad z_h, z_h \grad \cutoff{\ell}}.
    \end{equation}

    The first term vanishes because, for \(w_h=\NodalInt(\cutoff{\ell}z_h)\), the localization removes the right-hand side of \eqref{dslp:eq:TestLocalizedIncrement}.  
    Indeed, all Lagrange nodal values of \(\NodalInt(\Lambda w_h)\) vanish, since
    \begin{equation*}
        \NodalInt(\Lambda w_h)(a_j)
        =
        \Lambda(a_j)\cutoff{\ell}(a_j)z_h(a_j).
    \end{equation*}
    If \(\Lambda(a_j)\neq0\), then
    \(a_j\in\supp\Lambda\subseteq \overline{D}_\Lambda^\ell\) and
    \(\cutoff{\ell}(a_j)=0\). Otherwise, the factor \(\Lambda(a_j)\) is
    already zero.  Hence, \(\NodalInt(\Lambda w_h)=0\), and
    \eqref{dslp:eq:TestLocalizedIncrement} gives
    \begin{equation}
        \label{dslp:eq:IncrementCNlocalData:term1}
        \mathcal{K}_{\fulldomain}
        \bigl(z_h,\NodalInt(\cutoff{\ell}z_h)\bigr)=0 .
    \end{equation}

    Similarly, the second term is only non-zero on cells contained in \(\bLayer{\ell} = \Patch{\ell}{\supp \Lambda} \setminus \Patch{\ell-1}{\supp \Lambda}\). By the same derivation as for \eqref{dslp:eq:CNlocalData:term2}, we obtain 
    \begin{equation}
        \label{dslp:eq:IncrementCNlocalData:term2}
        \mathcal{K}_{\fulldomain}(z_h,(\id-\NodalInt)(\cutoff{\ell} z_h))
        \leq \Cint \frac{\beta^{\half}}{\alpha^{\half}} \Bigl(\frac{\CCutoff}{4}\frac{\tau}{h}  + 1\Bigr) \tnorm[\bLayer{\ell}]{z_h}^2.
    \end{equation}
    Following the steps from \eqref{dslp:eq:CNlocalData:term3}, we finally get for the third term
    \begin{equation}
        \label{dslp:eq:IncrementCNlocalData:term3}
        \abs{\ip[\fulldomain]{A\grad z_h,z_h\grad\cutoff{\ell}}}
        \leq 
        \frac{\beta^{\half}}{\alpha^{\half}} \frac{\CCutoff}{4} \frac{\tau}{h} 
        \tnorm[\bLayer{\ell}][\big]{z_h}^2.
    \end{equation}
    Substituting \eqref{dslp:eq:IncrementCNlocalData:term1}, \eqref{dslp:eq:IncrementCNlocalData:term2}, and \eqref{dslp:eq:IncrementCNlocalData:term3} into \eqref{dslp:eq:IncrementCNlocalData:terms} yields
    \begin{equation*}
        \tnorm[\fulldomain \setminus D_{\Lambda}^{\ell}][\big]{z_h}^2 \leq  W_{\tau/h}  \Bigl( \tnorm[\fulldomain \setminus D_{\Lambda}^{\ell-1}][\big]{z_h}^2 -\tnorm[\fulldomain \setminus D_{\Lambda}^{\ell}][\big]{z_h}^2 \Bigr),
    \end{equation*}
    from which the claim follows as in Lemma~\ref{dslp:lem:CNlocalizedData} with a final application of Lemma~\ref{dslp:lem:TestLocalizedIncrementStability}. 
\end{proof}

Our next goal is to also recover an estimate similar to Lemma~\ref{dslp:Lem:localizedCN} for the difference of the global increment~\(\delta u_\Lambda^n\) and the locally computed increment~\(\widehat{\delta}u^n_{\Lambda}\) as defined in \eqref{dslp:eq:TestLocalizedIncrementLocalCompute}.

\begin{lemma}[Truncation error of localized increments]
	\label{dslp:lem:TestLocalizedIncrementTruncation}
	Let \(\widehat{\delta} u_{\Lambda}^n\) be the local increment on~\(D_\Lambda^\ell\), defined in~\eqref{dslp:eq:TestLocalizedIncrementLocalCompute}, with~\(\ell\in \mathbb{N}\).  Then,
	\begin{equation*}
		\tnorm[\fulldomain][\big]{\delta u_\Lambda^n-\widehat{\delta}u^n_{\Lambda}}
		\leq
			C \Bigl(1 + \frac{\tau}{h}\Bigr) \max \monosetc*{\frac{\tau}{h},1} \gamma^{\ell}
			\left(
				\norm[X_h(\fulldomain)][\Big]{\pmat{\uCN{n-1}\\\vCN{n-1}}}
				+\tau\norm[b,\fulldomain][\big]{\overline{f}_h^n}
			\right),
	\end{equation*}
    with \(\gamma\) from Lemma~\ref{dslp:lem:CNlocalizedData} and a constant \(C\) depending on the material bounds \(\alpha, \beta \), the polynomial degree \(k\), the spatial dimension, and the shape regularity of the mesh.
\end{lemma}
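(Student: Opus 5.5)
I will mimic the proof of Lemma~\ref{dslp:Lem:localizedCN}, with Lemma~\ref{dslp:lem:TestLocalizedIncrementDecay} taking over the role of Lemma~\ref{dslp:lem:CNlocalizedData}. Set \(e_h \coloneqq \delta u_\Lambda^n-\widehat{\delta}u^n_{\Lambda}\), extending \(\widehat{\delta}u^n_{\Lambda}\) by zero outside \(D_\Lambda^\ell\).

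\textbf{Galerkin orthogonality.} Take a test function \(w_h\in\PkZeroTh\) with \(\supp w_h\subseteq \overline{D}_\Lambda^\ell\); then \(w_h\in\PkZeroTh[D_\Lambda^\ell]\). For such \(w_h\), the right-hand sides of \eqref{dslp:eq:TestLocalizedIncrement} and \eqref{dslp:eq:TestLocalizedIncrementLocalCompute} coincide. Indeed, \(\NodalInt(\Lambda w_h)\) is supported in \(\overline{D}_\Lambda^\ell\), so the integrals over \(\fulldomain\) and over \(D_\Lambda^\ell\) agree. Likewise \(\mathcal{K}_{\fulldomain}(\widehat{\delta}u_\Lambda^n,w_h)=\mathcal{K}_{D_\Lambda^\ell}(\widehat{\delta}u_\Lambda^n,w_h)\). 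Subtracting the two equations gives
\[
\mathcal{K}_{\fulldomain}(e_h,w_h)=0 \quad\text{for all such } w_h ,
\]
the analogue of \eqref{dslp:eq:localizedDifferenceWeakForm}.

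\textbf{Best-approximation step.} Since \(\widehat{\delta}u_\Lambda^n\) itself is an admissible test function, orthogonality gives
\[
\tnorm{e_h}^2=\mathcal{K}_{\fulldomain}(e_h,\delta u_\Lambda^n-w_h)
\]
for every admissible \(w_h\). I choose \(w_h=\NodalInt((1-\cutoff{\ell})\delta u_\Lambda^n)\), with \(\cutoff{\ell}\) the cutoff from \eqref{dslp:eq:def:Cutoff} built for \(\omega=\supp\Lambda\). Its support lies in \(\Patch{\ell}{\supp\Lambda}\), so it is admissible, and \(\delta u_\Lambda^n-w_h=\NodalInt(\cutoff{\ell}\delta u_\Lambda^n)\) vanishes on \(\Patch{\ell-1}{\supp\Lambda}\). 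The Cauchy--Schwarz inequality then yields
\[
\tnorm{e_h}\le \tnorm[\fulldomain\setminus D_\Lambda^{\ell-1}]{\NodalInt(\cutoff{\ell}\delta u_\Lambda^n)} .
\]
Exactly as in \eqref{dslp:eq:boundTnormCutoffFunction}, using Lemma~\ref{dslp:Lem:StabilityIhProductIh} and the gradient bound on \(\cutoff{\ell}\), this is at most \(W_{\tau/h}\tnorm[\fulldomain\setminus D_\Lambda^{\ell-1}]{\delta u_\Lambda^n}\).

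\textbf{Applying the decay.} Apply Lemma~\ref{dslp:lem:TestLocalizedIncrementDecay} with \(\ell-1\) layers, which is allowed since \(\ell\ge1\) gives \(\ell-1\in\mathbb{N}_0\). This gives the factor \(C(1+\tau/h)\gamma^{\ell-1}\) times the data term. Finally write
\[
W_{\tau/h}\gamma^{\ell-1}=W_{\tau/h}^{1/2}(1+W_{\tau/h})^{1/2}\gamma^{\ell}.
\]
Since \(W_{\tau/h}=C(1+\tau/h)\), the prefactor \(W_{\tau/h}^{1/2}(1+W_{\tau/h})^{1/2}\) is bounded by \(C\max\{\tau/h,1\}\). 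Together these give the stated bound.

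\textbf{Main obstacle.} The delicate point is the orthogonality step. It needs the localized right-hand side \(\NodalInt(\Lambda w_h)\) to be seen identically by both problems. It also needs the local \(\mathcal{K}_{D_\Lambda^\ell}\) to agree with the global form on functions supported in \(\overline{D}_\Lambda^\ell\). Both follow from the matching of \(D_\Lambda^\ell\) with the mesh and from \(\supp\Lambda\subseteq \overline{D}_\Lambda^\ell\), but they must be checked cell-wise, including the nodes on \(\partial D_\Lambda^\ell\), where \(w_h\) vanishes.
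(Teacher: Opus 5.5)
Your proposal is correct and follows the paper's proof essentially step by step. The steps are: Galerkin orthogonality of \(e_h\) on \(\PkZeroTh[D_\Lambda^\ell]\), the test function \(\NodalInt((1-\cutoff{\ell})\delta u_\Lambda^n)\), Cauchy--Schwarz to reduce to \(\tnorm[\fulldomain\setminus D_\Lambda^{\ell-1}]{\NodalInt(\cutoff{\ell}\delta u_\Lambda^n)}\), the bound as in \eqref{dslp:eq:boundTnormCutoffFunction}, and finally Lemma~\ref{dslp:lem:TestLocalizedIncrementDecay} with \(\ell-1\) layers. Your explicit identity \(W_{\tau/h}\gamma^{\ell-1}=W_{\tau/h}^{1/2}(1+W_{\tau/h})^{1/2}\gamma^{\ell}\) simply writes out the step the paper covers by referring back to Lemma~\ref{dslp:Lem:localizedCN}.
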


\begin{proof}
    We extend \(\widehat{\delta}u_\Lambda^n\) by zero outside \(D_\Lambda^\ell\), and set \(e_h=\delta u_\Lambda^n-\widehat{\delta}u_\Lambda^n\).
	For every~\(w_h\in\PkZeroTh[D_\Lambda^\ell]\), the global problem
	\eqref{dslp:eq:TestLocalizedIncrement} and the local problem
	\eqref{dslp:eq:TestLocalizedIncrementLocalCompute} have the same right-hand
	side, because it holds that \(\supp\NodalInt(\Lambda w_h)\subseteq \Patch{\ell}{\supp\Lambda}\).  Hence,
	\begin{equation*}  
		\mathcal K_{\fulldomain}(e_h,w_h)=0
		\qquad
		\text{for all }w_h\in\PkZeroTh[D_\Lambda^\ell].
    \end{equation*}
    We now proceed similarly to the proof of Lemma~\ref{dslp:Lem:localizedCN} and choose \(w_h = \NodalInt((1- \cutoff{\ell})\delta u_\Lambda^n)\) with \(\cutoff{\ell}\) as defined in \eqref{dslp:eq:def:Cutoff}. 
    Then \(w_h\in\PkZeroTh[D_\Lambda^\ell]\), and therefore
	\begin{equation*}
		\tnorm[\fulldomain]{e_h}^2
		=
		\mathcal K_{\fulldomain}(e_h,e_h)
		=
		\mathcal K_{\fulldomain}
		\bigl(e_h,\delta u_\Lambda^n-w_h\bigr).
    \end{equation*}
    The difference in the second argument is supported only on 
	\(K \in \Th\setminus\Patch{\ell-1}{\supp\Lambda}\), because
	\begin{equation*}
		\delta u_\Lambda^n-\NodalInt((1-\cutoff{\ell})\delta u_\Lambda^n)
		=
		\NodalInt(\cutoff{\ell}\delta u_\Lambda^n).   
    \end{equation*}
    Using the Cauchy--Schwarz inequality in the \(\tnorm{\cdot}\)-associated inner product gives
	\begin{equation}
        \label{dslp:eq:Tnorm-eh}
       \tnorm[\fulldomain]{e_h}
		\leq
		\tnorm[\fulldomain\setminus D_\Lambda^{\ell-1}]
		{\NodalInt(\cutoff{\ell}\delta u_\Lambda^n)},
    \end{equation}
    with \(D_\Lambda^{\ell-1} = \dom \Patch{\ell-1}{\supp\Lambda}\).
    As for \eqref{dslp:eq:boundTnormCutoffFunction}, we derive 
    \begin{equation*}
        \tnorm[\fulldomain \setminus D_\Lambda^{\ell-1}][\big]{\NodalInt(\cutoff{\ell}\delta u_\Lambda^n)} \leq 
		\Cint \frac{\beta^{\half}}{\alpha^{\half}} ( 1+ \frac{\CCutoff}{2} \frac{\tau}{h}) \tnorm[\fulldomain \setminus D_\Lambda^{\ell-1}]{\delta u_\Lambda^n},
    \end{equation*}
    from which it follows with \eqref{dslp:eq:Tnorm-eh} as in Lemma~\ref{dslp:Lem:localizedCN} that
    \begin{equation*}
        \tnorm[\fulldomain]{e_h}
        \leq C \max\monosetc*{\frac{\tau}{h},1}\gamma \tnorm[\fulldomain \setminus D_\Lambda^{\ell-1}]{\delta u_\Lambda^n},
    \end{equation*}
    with a constant \(C>0\) depending only on \(\alpha,\beta,k\), the spatial dimension, and the shape regularity of \(\Th\). The application of Lemma~\ref{dslp:lem:TestLocalizedIncrementDecay} then yields the result.
\end{proof}

\subsection{Estimate for the localized implicit prediction}
\label{dslp:subsec:localizedPrediction:EstimatesPrediction}
After the preparatory work from Sections~\ref{dslp:subsec:localizedPrediction:CNlocalized} and~\ref{dslp:subsec:localizedPrediction:VariationalLocalization}, we are now in a position to derive the prediction bound stated in Lemma~\ref{dslp:lem:TestLocalizedPredictionEstimateNew}, which is used within the proof of the convergence result in Theorem~\ref{dslp:thm:ConvergenceOverlapCondition}.
The last missing step is taking the boundary lifting from Assumption~\ref{dslp:ass:boundaryLifting} into account. 
The idea behind that is natural: The prediction \(\uLP{i}{n}\) does not have to be accurate on the
full overlap \(\SDov{i}\).  It is sufficient to control it on the lifting strip
\(\LiftStrip{i}\), because \eqref{dslp:eq:BoundaryLiftingStability}
turns such a strip estimate into a bound for the local difference.  The cutoff
\(\chi_i\) is therefore chosen with a plateau on \(\LiftStrip{i}\), while the additional buffer~\(\povp[2]\) is used only to localize the increment calculation. 

\begin{proof}[Proof of Lemma~\ref{dslp:lem:TestLocalizedPredictionEstimateNew}]
    Our goal is to derive an estimate for 
    \begin{equation*}
        \tnorm[\LiftStrip{i}]{\uLP{i}{n}-\uCNmod{n}},
    \end{equation*}
	where \(\uLP{i}{n}\) is the localized prediction, and \(\uCNmod{n}\) denotes a global CN step \eqref{dslp:eq:CNfirstOrderFormulation} with \(\uDS{n-1}\) as the previous approximation.
    The localized prediction \(\uLP{i}{n}\) is defined as 
    \begin{equation*}
        \uLP{i}{n} = \uDS{n-1} + \widehat{\delta}\widetilde u_{\chi_i}^n,
    \end{equation*}
    with \(\widehat{\delta}\widetilde u_{\chi_i}^n\) characterized by \eqref{dslp:eq:method:localizedPredictionIncrement}. For \(\uCNmod{n}\), we denote the corresponding increment by
    \begin{equation*}
        \delta \uCNmod{n} = \uCNmod{n} - \uDS{n-1}.
    \end{equation*}
    Since
	\begin{equation*}
		\NodalInt(\chi_i w_h)+\NodalInt((1-\chi_i)w_h)
		=
		\NodalInt(w_h)
		=
		w_h
		\qquad\text{for all } w_h\in\PkZeroTh,
    \end{equation*}
    we can decompose \(\delta \uCNmod{n}\) by superposition as 
    \begin{equation*}
		\delta \uCNmod{n}
		=
		\delta \widetilde{u}_{\chi_i}^n+\delta \widetilde{u}_{1-\chi_i}^n,
	\end{equation*}
    using \eqref{dslp:eq:TestLocalizedIncrement} with \(\Lambda=\chi_i\) and \(\Lambda=1-\chi_i\) to define \(\delta \widetilde{u}_{\chi_i}^n\) and \(\delta \widetilde{u}_{1-\chi_i}^n\), respectively.
    Therefore, since both \(\uCNmod{n}\) and \(\uLP{i}{n}\) start from \(\solDS{n-1}\), we have
	\begin{equation}
        \label{dslp:eq:localizedPredictionErrorDecomp}
		\uCNmod{n}-\uLP{i}{n}
		=
		\delta \widetilde{u}_{1-\chi_i}^n
		+
		\left(\delta \widetilde{u}_{\chi_i}^n-\widehat{\delta} \widetilde{u}_{\chi_i}^n\right).
	\end{equation}
    We estimate the two terms separately.  For the first term, the
	support of \(1-\chi_i\) is separated from~\(\LiftStrip{i}\) by~\(\povp[1]\) layers.
	Thus, Lemma~\ref{dslp:lem:TestLocalizedIncrementDecay}, applied with
	\(\Lambda=1-\chi_i\), gives
    \begin{equation}
        \label{dslp:eq:ApplicationTestLocalizedIncrementDecay}
		\tnorm[\LiftStrip{i}]{\delta \widetilde{u}_{1-\chi_i}^n}
		\leq
			C \Bigl(1 + \frac{\tau}{h}\Bigr)\gamma^{\povp[1]}
			\left(
				\norm[X_h(\fulldomain)][\big]{\solDS{n-1}}
				+\tau\norm[b,\fulldomain][\big]{\overline{f}_h^n}
			\right).
	\end{equation}
    For the truncation contribution, Lemma~\ref{dslp:lem:TestLocalizedIncrementTruncation}
	with \(\Lambda=\chi_i\) and \(\ell=\povp[2]\) gives
    \begin{equation}
        \label{dslp:eq:ApplicationTestLocalizedIncrementTruncation}
		\tnorm[\fulldomain][\big]{\delta \widetilde{u}_{\chi_i}^n-\widehat{\delta} \widetilde{u}_{\chi_i}^n}
		\leq
			\widetilde{C} \Bigl(1 + \frac{\tau}{h}\Bigr)\max \monosetc*{\frac{\tau}{h},1} \gamma^{\povp[2]}
			\left(
				\norm[X_h(\fulldomain)][\big]{\solDS{n-1}}
				+\tau\norm[b,\fulldomain][\big]{\overline{f}_h^n}
			\right).
	\end{equation}
    Thus, we set
    \begin{equation*}
        \CLoc \coloneqq \max \monosetc*{C \Bigl(1 + \frac{\tau}{h}\Bigr), \widetilde{C} \Bigl(1 + \frac{\tau}{h}\Bigr)\max \monosetc[\big]{\frac{\tau}{h},1}}. 
    \end{equation*} 
	Applying the triangle inequality to \eqref{dslp:eq:localizedPredictionErrorDecomp} together with the bounds \eqref{dslp:eq:ApplicationTestLocalizedIncrementDecay} and \eqref{dslp:eq:ApplicationTestLocalizedIncrementTruncation}
	yields the claim
	\begin{equation*}
		\tnorm[\LiftStrip{i}]{\uLP{i}{n}-\uCNmod{n}}
		\leq
				\CLoc
		\left(\gamma^{\povp[1]}+\gamma^{\povp[2]}\right)
		\left(
			\norm[X_h(\fulldomain)][\big]{\solDS{n-1}}
			+\tau\norm[b, \fulldomain]{\overline{f}_h^n}
		\right). \qedhere
	\end{equation*}
\end{proof}

\section{Exponential decay of local errors}
\label{dslp:sec:discreteSaintVenant} 
The goal of this section is to prove Lemma~\ref{dslp:lem:BoundLocalDiff} from
Section~\ref{dslp:sec:main-results}. The lemma bounds the local subdomain error
\begin{equation*}
	\norm[X_h(\SD{i})][\big]{\solDSloc{i}{n}  -  \solCNmod{n}}
\end{equation*}
by the prediction error on the lifting strip~\(\LiftStrip{i}\).  
Both the subdomain solution~\(\solDSloc{i}{n}\) and the comparison solution~\(\solCNmod{n}\) are obtained from one CN step starting from \(\solDS{n-1}\), the former locally on \(\SDov{i}\) and the latter globally on~\(\fulldomain\).

The argument consists of three steps. First, we identify the elliptic problem satisfied by the local difference between \(\uDSloc{i}{n}\) and \(\uCNmod{n}\) on \(\SDov{i}\).
Second, we apply the discrete Saint-Venant principle from \cite[Theorem~3.1]{BucD26} to bound the discrete energy on the non-overlapping subdomain \(\SD{i}\) by the discrete energy on the overlapping subdomain \(\SDov{i}\), multiplied by an exponentially decaying factor in the overlap width.
Third, using the well-posedness of the elliptic problem and Assumption~\ref{dslp:ass:boundaryLifting}, we bound the discrete energy of the local difference on \(\SDov{i}\) by the prediction error on the lifting strip.
We start with the first step and characterize the local difference on
\(\SDov{i}\) variationally.

\begin{lemma}[Local difference problem]
	\label{dslp:lem:LocalDifferenceOnSubdomains}
	Let \(i \in \monosetc*{1, \dots, \numberSD}\) and consider the subdomain \(\SDov{i}\). 
    The difference \( z_{u,i}^n \coloneq \uDSloc{i}{n} - \restr{\uCNmod{n}}{\SDov{i}}\) satisfies 
	\begin{equation}
		\label{dslp:eq:EllipticProblemLocalDifference}
		\begin{aligned}
			\bbil[\SDov{i}]{z_{u,i}^n, \varphi_i} + \tss[4] \abil[\SDov{i}]{ z_{u,i}^n,  \varphi_i} & = 0  &&\text{for all } \varphi_i \in \PkZeroTh[\SDov{i}], \\
			z_{u,i}^n &  %
			= \uLP{i}{n} - \uCNmod{n} \qquad && \text{on } \SDovint{i},\\
			z_{u,i}^n
			&=0 \qquad && \text{on } \dSDov{i}\setminus\SDovint{i}.
		\end{aligned}
	\end{equation}
	Moreover, for \( z_{v,i}^n = \vDSloc{i}{n} - \restr{\vCNmod{n}}{\SDov{i}} \), we have
	\begin{equation}
		\label{dslp:eq:EllipticProblemLocalDifference_p}
		z_{u,i}^n = \ts[2] z_{v,i}^n \quad \text{ in } \PkTh[\SDov{i}].
	\end{equation}
\end{lemma}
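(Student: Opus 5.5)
The plan is to subtract the two Crank--Nicolson steps that both start from \(\solDS{n-1}\): the local step on \(\SDov{i}\) producing \(\solDSloc{i}{n}\), and the global step producing \(\solCNmod{n}\). Both satisfy the componentwise scheme \eqref{dslp:eq:CNfirstOrderFormulation}. The local one holds tested against functions in \(\PkZeroTh[\SDov{i}]\), with the same old values \(\restr{\solDS{n-1}}{\SDov{i}}\) and the same forcing \(\overline f_h^n\). The global one holds tested against all of \(\PkZeroTh\), and hence in particular against \(\PkZeroTh[\SDov{i}]\) after extension by zero.

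First I treat the \(u\)-equation \eqref{dslp:eq:CNfirstOrderFormulation-a}. It is a pointwise identity between finite element functions, valid for the local step on all of \(\PkTh[\SDov{i}]\) (the velocity lives in the full \(\PkTh\), not the zero-trace space). The same holds for the global step restricted to \(\SDov{i}\). Subtracting, the terms \(\uDS{n-1}\) and \(\vDS{n-1}\) cancel, giving \(z_{u,i}^n=\frac{\tau}{2}z_{v,i}^n\) in \(\PkTh[\SDov{i}]\). This is \eqref{dslp:eq:EllipticProblemLocalDifference_p}.

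One point to check is that the local CN step is indeed formulated with \(\vDSloc{i}{n}\in\PkTh[\SDov{i}]\) defined through \eqref{dslp:eq:CNfirstOrderFormulation-a}. This matches the implementation via the increment formulation \eqref{dslp:eq:CN-increment-formulation}, where \(v\) is updated by rearranging the first equation. So it is a matter of fixing the interpretation of ``one CN step on \(\SDov{i}\)'' with inhomogeneous Dirichlet data.

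Next I take the \(v\)-equation \eqref{dslp:eq:CNfirstOrderFormulation-b} in weak form, tested with \(\varphi_i\in\PkZeroTh[\SDov{i}]\). The weak forms are \(\bbil{\vComponent^n-\vComponent^{n-1},\varphi_i}=-\frac{\tau}{2}\abil{\uComponent^n+\uComponent^{n-1},\varphi_i}+\tau\bbil{\overline f_h^n,\varphi_i}\) on \(\SDov{i}\). For the global solution this follows by restricting the test functions, since \(\varphi_i\) extended by zero lies in \(\PkZeroTh\) and all integrals reduce to \(\SDov{i}\). Subtracting cancels the old data and the forcing, leaving \(\bbil[\SDov{i}]{z_{v,i}^n,\varphi_i}=-\frac{\tau}{2}\abil[\SDov{i}]{z_{u,i}^n,\varphi_i}\). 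Substituting \(z_{v,i}^n=\frac{2}{\tau}z_{u,i}^n\) and multiplying by \(\tau/2\) yields the elliptic equation in \eqref{dslp:eq:EllipticProblemLocalDifference}.

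Finally, the boundary conditions. On \(\SDovint{i}\), the local solution equals \(\uLP{i}{n}\) by construction, so \(z_{u,i}^n=\uLP{i}{n}-\uCNmod{n}\) there. On \(\dSDov{i}\setminus\SDovint{i}\subset\dfulldomain\), both the local solution (homogeneous data) and \(\uCNmod{n}\in\PkZeroTh\) vanish.

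I expect the main obstacle to be this bookkeeping rather than any estimate: making precise that the local step's velocity equation holds only against test functions vanishing on \(\dSDov{i}\), while the displacement relation holds nodally everywhere, including on the interface.
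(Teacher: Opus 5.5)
Your proposal is correct and takes the same route as the paper. The paper's proof only says the result follows by straightforward calculations as in \cite[Lemma~6.1]{BucH25CG}, and your argument is that calculation. You subtract the displacement relation of the local and global Crank--Nicolson steps, which holds for all finite element functions on \(\SDov{i}\) including the interface. You subtract the velocity equation only against test functions with zero trace on \(\partial\SDov{i}\), eliminate \(z_{v,i}^n\), and read off the boundary values.
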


\begin{proof}
	As in \cite[Lemma 6.1]{BucH25CG}, the proof follows by straightforward calculations.
\end{proof}

Let \(D_h \coloneq \Th[\SD{i}]\) and recall that \(\Th[\SDov{i}] = \Patch{\ovp}{D_h}\) by construction.
As in \cite[Section~4.2]{BucD26} and similar to \cite[Definition 3.3]{MalP14}, we define a cutoff function \(\cutoff{\ell} \in \PkTh\) for each \(\Patch{\ell}{D_h}\) with 
\begin{subequations}
    \label{dslp:eq:AssCutoff}
    \begin{alignat}{2}
         \label{dslp:eq:AssCutoff:A} 0 \leq \cutoff{\ell} &\leq 1, &&\text{on } \fulldomain, \\
        \label{dslp:eq:AssCutoff:B} \cutoff{\ell} &= 0, &&\text{on } K \in \Th \setminus \Patch{\ell+1}{D_h}, \\
        \label{dslp:eq:AssCutoff:C} \cutoff{\ell} &= 1, &&\text{on } K \in \Patch{\ell}{D_h}, \\
        \label{dslp:eq:AssCutoff:D} \norm[L^{\infty}(\fulldomain)][\big]{\grad \cutoff{\ell}} &\leq \CCutoff h^{-1} \; ,
    \end{alignat}
\end{subequations}
with a constant \(\CCutoff>1\) independent of \(h\). Using this construction we show that the discrete elliptic energy with respect to the variational problem \eqref{dslp:eq:EllipticProblemLocalDifference} contained in successive element patches around the initial cell set \(D_h = \Th[\SD{i}]\) contracts with a factor strictly smaller than one. This implies an exponential decay of the discrete energy with respect to the number of mesh layers, in our case the overlap parameter \(\ovp\). 

\begin{lemma}[Discrete Saint-Venant principle]
	\label{dslp:lem:DSV-decay}
	Let \(z_{u,i}^n \in \PkTh[\SDov{i}]\) be the discrete solution to the variational formulation~\eqref{dslp:eq:EllipticProblemLocalDifference}.
	Then, 
	\begin{equation}
		\label{dslp:eq:dSV:VolumeBound}
		\tnorm[\SD{i}][\big]{z_{u,i}^n} \leq \rho^{\ovp} \tnorm[\SDov{i}][\big]{z_{u,i}^n}, \qquad \text{with} \qquad \rho = \Bigl(\frac{M_{\tau/h}}{1 + M_{\tau/h}} \Bigr)^{\half} < 1,
	\end{equation}
	and 
	\begin{equation*}
		M_{\tau/h} = \Cint \max \monosetc*{1, \frac{\CCutoff}{2}} \frac{\beta^{\half}}{\alpha^{\half}} \bigl( 1 + \frac{\tau}{2 h}\bigr).
	\end{equation*}
\end{lemma}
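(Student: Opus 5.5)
The plan is a layer-by-layer contraction, as in the proof of Lemma~\ref{dslp:lem:CNlocalizedData}, but now working inward from the artificial boundary towards \(D_h=\Th[\SD{i}]\) instead of outward from the data. Write \(z=z_{u,i}^n\) and use the bilinear form \(\mathcal{K}_{\SDov{i}}\) from \eqref{dslp:eq:mathcalK-Def}, so that \(\tnorm[D]{z}^2=\mathcal{K}_D(z,z)\). By \eqref{dslp:eq:EllipticProblemLocalDifference}, \(\mathcal{K}_{\SDov{i}}(z,\varphi)=0\) for all \(\varphi\in\PkZeroTh[\SDov{i}]\). The goal is the one-step estimate
\[
\tnorm[\Patch{\ell}{D_h}]{z}^2\leq \frac{M_{\tau/h}}{1+M_{\tau/h}}\,\tnorm[\Patch{\ell+1}{D_h}]{z}^2,\qquad \ell=0,\dots,\ovp-1.
\]
Iterating it \(\ovp\) times from \(\Patch{0}{D_h}=D_h\) up to \(\Patch{\ovp}{D_h}=\SDov{i}\) gives \eqref{dslp:eq:dSV:VolumeBound}.

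\textbf{One-step estimate.} Fix \(\ell\) and take the cutoff \(\cutoff{\ell}\) from \eqref{dslp:eq:AssCutoff}. Then \(\cutoff{\ell}=1\) on \(\Patch{\ell}{D_h}\) and \(\cutoff{\ell}=0\) outside \(\Patch{\ell+1}{D_h}\subseteq\SDov{i}\) for \(\ell+1\leq\ovp\). Hence \(\varphi=\NodalInt(\cutoff{\ell}z)\) vanishes at all nodes on \(\dSDov{i}\) and is an admissible test function. Using \(\cutoff{\ell}\geq0\) and the product rule,
\[
\tnorm[\Patch{\ell}{D_h}]{z}^2\leq \mathcal{K}_{\SDov{i}}(z,\NodalInt(\cutoff{\ell}z))+\mathcal{K}_{\SDov{i}}\bigl(z,(\id-\NodalInt)(\cutoff{\ell}z)\bigr)-\ip[\SDov{i}]{A\grad z, z\grad\cutoff{\ell}}.
\]
The first term vanishes by the Galerkin orthogonality. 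The remaining two terms are supported only on the layer \(\bLayer{\ell+1}=\Patch{\ell+1}{D_h}\setminus\Patch{\ell}{D_h}\), since there \(\cutoff{\ell}\) is nonconstant and \((\id-\NodalInt)(\cutoff{\ell}z)\) and \(\grad\cutoff{\ell}\) can be nonzero. I will bound them exactly as in \eqref{dslp:eq:CNlocalData:term2} and \eqref{dslp:eq:CNlocalData:term3}. Lemma~\ref{dslp:Lem:StabilityIhProductIh} and \eqref{dslp:eq:AssCutoff:D} produce a mixed term \(\frac{\CCutoff}{h}\norm[a,K]{z}\norm[b,K]{z}\), and the weighted Young inequality \eqref{dslp:eq:estimateMixedNormCutoffTerm} turns it into \(\frac{\CCutoff\tau}{4h}\tnorm[K]{z}^2\). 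Collecting constants, with \(\Cint\geq1\) and grouping \(\Cint\frac{\CCutoff}{4}\frac{\tau}{h}+\frac{\CCutoff}{4}\frac{\tau}{h}\leq \Cint\max\{1,\frac{\CCutoff}{2}\}\frac{\tau}{2h}\), gives
\[
\tnorm[\Patch{\ell}{D_h}]{z}^2\leq M_{\tau/h}\,\tnorm[\bLayer{\ell+1}]{z}^2= M_{\tau/h}\bigl(\tnorm[\Patch{\ell+1}{D_h}]{z}^2-\tnorm[\Patch{\ell}{D_h}]{z}^2\bigr).
\]
Rearranging yields the one-step estimate with factor \(\rho^2=M_{\tau/h}/(1+M_{\tau/h})\).

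\textbf{Main obstacle.} The delicate point is admissibility of the test function near the physical boundary \(\dSDov{i}\cap\dfulldomain\), where \(z\) vanishes but \(\cutoff{\ell}\) may not. Since \(z=0\) there, \(\NodalInt(\cutoff{\ell}z)\) still vanishes at all boundary nodes, so admissibility holds. The only remaining work is tracking the constants so they match the stated \(M_{\tau/h}\); the algebra itself is routine.
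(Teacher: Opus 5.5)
The paper does not redo this argument. It imports \cite[Theorem~3.1 a)]{BucD26} with $\lambda=2/\tau$ and only adapts the $L^2$-bounds to the $B$-weighted setting. Your self-contained layer contraction is a natural way to reproduce it, and its structure is sound:
\begin{itemize}
\item the cutoffs from \eqref{dslp:eq:AssCutoff} give an admissible test function $\NodalInt(\cutoff{\ell}z)$, because $\cutoff{\ell}$ vanishes on $\SDovint{i}$ for $\ell+1\le\ovp$ and $z$ vanishes on $\dSDov{i}\cap\dfulldomain$;
\item Galerkin orthogonality kills the main term;
\item the rearrangement and iteration over $\ell=0,\dots,\ovp-1$ are correct.
\end{itemize}

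\textbf{The gap is the constant.} Your grouping inequality $\Cint\frac{\CCutoff}{4}\frac{\tau}{h}+\frac{\CCutoff}{4}\frac{\tau}{h}\le\Cint\max\{1,\frac{\CCutoff}{2}\}\frac{\tau}{2h}$ is false in general. For $\CCutoff\ge2$ its right-hand side equals $\Cint\frac{\CCutoff}{4}\frac{\tau}{h}$, which is strictly smaller than the left-hand side. For $\CCutoff<2$ it requires $(\Cint+1)\CCutoff\le2\Cint$, which does not follow from the assumptions. What your product-rule split actually yields is $\frac{\beta^{1/2}}{\alpha^{1/2}}\bigl(\Cint+(\Cint+1)\frac{\CCutoff}{4}\frac{\tau}{h}\bigr)\le W_{\tau/h}$, with $W_{\tau/h}$ from Lemma~\ref{dslp:lem:CNlocalizedData}. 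For $\CCutoff\ge2$ this exceeds $M_{\tau/h}$ once $\tau/h$ is large, which is exactly the regime of interest. So you prove decay with a larger contraction factor than the one stated. The loss comes from bounding the $(\id-\NodalInt)$-term as crudely as $\NodalInt$ itself and then paying separately for the commutator $-(A\grad z,z\grad\cutoff{\ell})$.

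\textbf{How to fix it.} Drop the product rule. Since $\cutoff{\ell}\equiv1$ on $\Patch{\ell}{D_h}$, you have $\NodalInt(\cutoff{\ell}z)=z$ on every cell of $\Patch{\ell}{D_h}$, and $\NodalInt(\cutoff{\ell}z)=0$ on cells outside $\Patch{\ell+1}{D_h}$. Galerkin orthogonality then gives the exact identity
\[
\tnorm[\Patch{\ell}{D_h}]{z}^2=-\mathcal{K}_{\bLayer{\ell+1}}\bigl(z,\NodalInt(\cutoff{\ell}z)\bigr).
\]
On each $K\in\bLayer{\ell+1}$, apply Lemma~\ref{dslp:Lem:StabilityIhProductIh} to $\NodalInt(\cutoff{\ell}z)$ itself, together with $0\le\cutoff{\ell}\le1$ and \eqref{dslp:eq:AssCutoff:D}. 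This gives $\norm[a,K]{\NodalInt(\cutoff{\ell}z)}\le\Cint\frac{\beta^{1/2}}{\alpha^{1/2}}\bigl(\norm[a,K]{z}+\frac{\CCutoff}{h}\norm[b,K]{z}\bigr)$ and $\norm[b,K]{\NodalInt(\cutoff{\ell}z)}\le\Cint\frac{\beta^{1/2}}{\alpha^{1/2}}\norm[b,K]{z}$. Cauchy--Schwarz term by term and the Young inequality \eqref{dslp:eq:estimateMixedNormCutoffTerm} then yield
\[
\bigl|\mathcal{K}_K\bigl(z,\NodalInt(\cutoff{\ell}z)\bigr)\bigr|\le\Cint\frac{\beta^{1/2}}{\alpha^{1/2}}\Bigl(1+\frac{\CCutoff}{4}\frac{\tau}{h}\Bigr)\tnorm[K]{z}^2\le M_{\tau/h}\,\tnorm[K]{z}^2 .
\]
The last step uses $1+\frac{\CCutoff}{2}\cdot\frac{\tau}{2h}\le\max\{1,\frac{\CCutoff}{2}\}\bigl(1+\frac{\tau}{2h}\bigr)$. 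From here your rearrangement and iteration go through unchanged.

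For the later use in Lemma~\ref{dslp:lem:BoundLocalDiff}, which only claims a generic constant $C$, your weaker $W_{\tau/h}\le\max\{1,\CCutoff\}\Cint\frac{\beta^{1/2}}{\alpha^{1/2}}\bigl(1+\frac{\tau}{2h}\bigr)$ would already suffice. It just does not prove the lemma with the constant as stated.
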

\begin{proof}
	Compared with \cite[Theorem~3.1 a)]{BucD26} with \(\lambda = 2/\ts\), the only additional point is the presence of the material coefficient \(B\) in the weighted \(L^2\)-term.
	The proof carries over after replacing the unweighted \(L^2\)-bounds by the corresponding \(B\)-weighted bounds.
	More precisely, one only has to adapt the integral bounds in \cite[Theorem~4.2 c) and d)]{BucD26}.
	This changes only the constants, through the material bounds \(\alpha\) and \(\beta\), and leads to the stated definition of \(M_{\tau/h}\).
\end{proof}

With Lemma~\ref{dslp:lem:DSV-decay} at hand, we can prove the main result of this section.

\begin{proof}[Proof of Lemma~\ref{dslp:lem:BoundLocalDiff}]
    By the well-posedness of the variational problem \eqref{dslp:eq:EllipticProblemLocalDifference}, we estimate the~\mbox{\(\tnorm[\SDov{i}]{\cdot}\)}-norm of the solution \(z_{u,i}^n\) against the respective norm of the boundary lifting 
    \begin{equation}
        \label{dslp:eq:aPrioriEstimateLifting}
        \tnorm[\SDov{i}]{z_{u,i}^n} \leq 2
		\tnorm[\SDov{i}][\big]{\blifting{i}\bigl(\restr{\uLP{i}{n}-\uCNmod{n}}{\SDovint{i}}\bigr)}.
    \end{equation}

	Combining \eqref{dslp:eq:aPrioriEstimateLifting} with Assumption~\ref{dslp:ass:boundaryLifting} gives
	\begin{equation}
		\label{dslp:eq:aPrioriEstimateLifting-2}
		\tnorm[\SDov{i}]{z_{u,i}^n}
		\leq
		2\Cblift h^{-\half}
		\tnorm[\LiftStrip{i}]{\uLP{i}{n}-\uCNmod{n}}.
	\end{equation}
	The discrete Saint-Venant principle from Lemma~\ref{dslp:lem:DSV-decay} together with \eqref{dslp:eq:aPrioriEstimateLifting-2} yields
	\begin{equation*}
		\tnorm[\SD{i}]{z_{u,i}^n} 
		\leq \rho^{\ovp}\tnorm[\SDov{i}]{z_{u,i}^n}
		\leq 2\Cblift h^{-\half} \rho^{\ovp}  
		\tnorm[\LiftStrip{i}]{\uLP{i}{n}-\uCNmod{n}},
	\end{equation*}
	which is the estimate stated in Lemma~\ref{dslp:lem:BoundLocalDiff}.
\end{proof}

\section{Averaging in the discrete energy space}
\label{dslp:sec:averaging}
In this last theoretical section, we prove Lemma~\ref{dslp:Lem:AvgStabilityXh}, which is a stability result for the averaging operator \(\averaging\) from \eqref{dslp:eq:method:averaging} in the discrete energy space \(X_h\).
The averaging operator itself is the same as in \cite[Section~3.3]{BucH25CG}. On the interface nodes of the non-overlapping subdomains it coincides with the standard nodal averaging operator \(J_h^\mathrm{av}\) from \cite[Section~22.2]{ErnG21I}.
As a preparatory step, we first derive a weighted \(L^2\)-stability estimate in the \(\norm[b]{\cdot}\)-norm, which is used to control the \(v\)-component in the subsequent \(X_h\)-stability argument.

\begin{lemma}[Weighted \(L^2\)-stability of averaging]
	\label{dslp:lem:AvgL2Stability}
		The averaging operator \( \averaging \) from \eqref{dslp:eq:method:averaging} satisfies a stability estimate in the weighted~\(L^2\)-norm for~\(\vComponent_i \in \PkTh[\SDov{i}], \; i\in\monosetc{1,\dots,\numberSD}\), in the sense of
		\begin{equation}
			\label{dslp:eq:averagingBnorm}
			\norm[b,\fulldomain][\big]{\averaging \big( \big\{ \vComponent_i \big\}_{i=1,\dots,\numberSD}\big)}^2 \leq \cavg \sumSD \norm[b, \SD{i}][\big]{\vComponent_i}^2
		\end{equation}
		with a constant \( \cavg > 0 \) which is independent of \(h\) and \(\tau\).
\end{lemma}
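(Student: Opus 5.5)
The argument is a purely local, element-by-element estimate. The only ingredients are norm equivalence on the reference element and the fact that the averaging touches only nodal values. Write \(w \coloneqq \averaging(\{\vComponent_i\}_{i=1}^{\numberSD}) \in \PkTh\) and let \(u_{\fulldomain}^{\#} = \sumSD \restr{\vComponent_i}{\SD{i}}\) be the broken function from \eqref{dslp:eq:method:averaging}.

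\textbf{Step 1: local nodal bound.} Fix an element \(K\in\Th\). The restriction \(\restr{w}{K}\) is a polynomial of degree \(k\), and its nodal values are
\[
  w(a_{K,j}) = \frac{1}{\card(a)}\sum_{(K',j')\in a}\bigl(\restr{u_{\fulldomain}^{\#}}{K'}\bigr)(a_{K',j'}),
  \qquad (K,j)\in a .
\]
Each summand is a nodal value of \(\restr{u_{\fulldomain}^{\#}}{K'}\) on an element \(K'\) sharing the node with \(K\), so \(K'\) lies in the first patch \(\Patch{1}{K}\). Applying the Cauchy--Schwarz inequality to the average gives
\[
  \abs{w(a_{K,j})}^2 \leq \max_{(K',j')\in a}\abs{(\restr{u_{\fulldomain}^{\#}}{K'})(a_{K',j'})}^2 .
\]

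\textbf{Step 2: back to \(L^2\).} Transform to the reference element and use equivalence of norms on \(\PkK\) via the affine map. This gives, on every element,
\[
  \norm[K]{p}^2 \sim h^d \sum_j \abs{p(a_{K,j})}^2, \qquad p\in\PkK ,
\]
with constants depending only on \(k\), \(d\) and the shape regularity of \(\Th\). Combining this with Step 1 yields
\[
  \norm[K]{w}^2 \lesssim h^d\sum_{K'\in\Patch{1}{K}}\max_{j'}\abs{(\restr{u_{\fulldomain}^{\#}}{K'})(a_{K',j'})}^2 \lesssim \sum_{K'\in\Patch{1}{K}}\norm[K']{\restr{u_{\fulldomain}^{\#}}{K'}}^2 .
\]
Quasi-uniformity ensures that the factor \(h^d\) cancels uniformly across neighbouring elements.

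\textbf{Step 3: weights and summation.} Insert the weight via \(\alpha\leq B\leq\beta\), so that \(\norm[b,K]{\cdot}^2\le\beta\norm[K]{\cdot}^2\) and \(\norm[K']{\cdot}^2\le\alpha^{-1}\norm[b,K']{\cdot}^2\). Then sum over \(K\in\Th\). Shape regularity bounds uniformly the number of elements in \(\Patch{1}{K}\), and likewise how often a given \(K'\) appears in such patches. Finally, \(\restr{u_{\fulldomain}^{\#}}{K'} = \restr{\vComponent_i}{K'}\) for \(K'\subset\SDc{i}\), and the \(\SD{i}\) are non-overlapping. Together these give
\[
  \norm[b,\fulldomain]{w}^2 \leq \cavg\sumSD\norm[b,\SD{i}]{\vComponent_i}^2 ,
\]
where \(\cavg\) depends on \(k\), \(d\), the shape regularity and \(\beta/\alpha\), but not on \(h\) or \(\tau\).

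The only delicate point is bookkeeping. Each element \(K'\) must be assigned to exactly one \(\SD{i}\), which holds because \(\overline{\fulldomain}=\bigcup_{i=1}^{\numberSD}\SDc{i}\) is a decomposition matching the mesh. The overlap then never contributes to the right-hand side, so no dependence on \(\ovp\) or \(\numberSD\) enters \(\cavg\). No time-step ratio appears, since only \(L^2\) quantities are involved.
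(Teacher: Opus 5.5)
Your proof is correct, but it takes a different route from the paper. The paper gives no argument of its own. It only identifies \(\zeta\) with the standard nodal averaging operator \(J_h^{\mathrm{av}}\) applied to the broken function \(u_\Omega^{\#}\) and defers to the known \(L^2\)-stability of that operator (\cite{BucH25CG}, \cite[Corollary~22.4]{ErnG21I}), leaving the passage to the \(B\)-weighted norm implicit.

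Your elementwise argument is a self-contained proof of exactly that cited fact. Its ingredients are:
\begin{enumerate}
  \item each averaged nodal value is bounded by the largest contributing nodal value;
  \item the scaled norm equivalence \(\|p\|_K^2\sim h^d\sum_j|p(a_{K,j})|^2\) on \(\mathbb{P}_k(K)\);
  \item the uniformly bounded overlap of the patches \(P_1(K)\);
  \item the bounds \(\alpha\le B\le\beta\).
\end{enumerate}

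The citation buys brevity. Your version buys an explicit constant: \(c_{\mathrm{avg}}\) depends only on \(k\), \(d\), the shape regularity and \(\beta/\alpha\). It depends neither on the overlap \(\ell_{\mathrm{o}}\) nor on \(N\), which is consistent with \(N^{1/2}\) being tracked separately in \(\vartheta\).

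The contrast dependence cannot be removed for general \(B\in L^\infty(\Omega)\). Take \(B=\alpha\) on \(\Omega_1\), \(B=\beta\) on a neighbouring \(\Omega_2\), \(v_2=0\), and \(v_1\) the nodal basis function of a node on their common interface. The averaged function is then a fixed fraction of that basis function, with the fraction bounded below via shape regularity. Hence the two sides of the estimate differ by a factor of order \(\beta/\alpha\).

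The paper's statement only asserts independence of \(h\) and \(\tau\), so there is no conflict, but your proof makes this dependence visible.
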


\begin{proof}
	The averaging operator~\(\averaging\) from \eqref{dslp:eq:method:averaging} coincides by definition with \(J_h^\mathrm{av}\) from \cite[Section~22.2]{ErnG21I}.
	The proof follows as in \cite[Lemma~7.1]{BucH25CG}, see also \cite[Corollary~22.4]{ErnG21I}.
\end{proof}

Lemma~\ref{dslp:lem:AvgL2Stability} now allows us to prove the stability of \(\averaging\) in the \(\norm[X_h]{\cdot}\)-norm. 
The key idea of this proof is to change from the \(u\)-component into the \(v\)-component, use an inverse estimate to bring everything into the \(\norm[b]{\cdot}\)-norm and then apply Lemma~\ref{dslp:lem:AvgL2Stability}.

\begin{proof}[Proof of Lemma~\ref{dslp:Lem:AvgStabilityXh}]
	Our goal is to convert the local subdomain error bounds from the previous sections into a global \(\norm[X_h]{\cdot}\)-estimate after averaging. 
	More precisely, we derive the bound \eqref{dslp:eq:avg_prop3} given by 
	\begin{equation*}
		\norm[X_h(\fulldomain)][\big]{\solDS{n}- \solCNmod{n}}^2 \leq \cavg \Cinv \frac{\beta}{\alpha}\Bigl(  \tss[h^2] +1 \Bigr) \sumSD \norm[X_h(\SD{i})][\big]{\solDSloc{i}{n}- \solCNmod{n}}^2,
	\end{equation*}
	with \(\cavg\) and \(\Cinv\) defined in Lemma~\ref{dslp:lem:AvgL2Stability} and Lemma~\ref{dslp:Lem:InverseEstimate}, respectively.
	For that, let \(i\in\monosetc{1,\dots,\numberSD}\) and set
	\begin{equation}
		z_i \coloneqq \solDSloc{i}{n} - \restr{\solCNmod{n}}{\SDov{i}}
		= \pmat{z_{\uComponent,i}\\ z_{\vComponent,i}} .
	\end{equation}
	Recall that a globally continuous function is preserved when restricting it to the overlapping subdomains and applying \(\averaging\) on these restrictions. 
	By \eqref{dslp:eq:method:globalDSupdate} and componentwise linearity of~\(\averaging\) we thus have
	\[
		\solDS{n}-\solCNmod{n}
		=
		\averaging\Big( \big\{ z_i \big\}_{i=1,\dots,\numberSD} \Big).
	\]
	Moreover, \eqref{dslp:eq:EllipticProblemLocalDifference_p} in Lemma~\ref{dslp:lem:LocalDifferenceOnSubdomains} gives
	\(z_{\uComponent,i}=\ts[2]z_{\vComponent,i}\) on \(\SDov{i}\). Hence, again by
	linearity of \(\averaging\),
	\[
		\uDS{n}-\uCNmod{n}
		=
		\ts[2]\bigl(\vDS{n}-\vCNmod{n}\bigr).
	\]
	Using the definition of the \(X_h(\fulldomain)\)-norm and the inverse inequality (Lemma~\ref{dslp:Lem:InverseEstimate}), we therefore obtain
	\begin{align*}
		\norm[X_h(\fulldomain)][\big]{\solDS{n}- \solCNmod{n}}^2
			& =
		\norm[a,\fulldomain]{\uDS{n}-\uCNmod{n}}^2
		+\norm[b, \fulldomain]{\vDS{n}-\vCNmod{n}}^2
		\\
			& \leq
		\bigl(\Cinv \frac{\beta}{\alpha} \tss[h^2]+1\bigr)
		\norm[b, \fulldomain]{\vDS{n}-\vCNmod{n}}^2 .
	\end{align*}
	Applying Lemma~\ref{dslp:lem:AvgL2Stability} to the scalar averaged
	velocity component yields
	\begin{align*}
		\norm[b, \fulldomain]{\vDS{n}-\vCNmod{n}}^2
			=
		\norm[b, \fulldomain][\big]{
			\averaging\Big( \big\{ z_{\vComponent,i}\big\}_{i=1,\dots,\numberSD}\Big)
		}^2 \leq
		\cavg\sumSD \norm[b, \SD{i}]{z_{\vComponent,i}}^2 \leq
		\cavg\sumSD \norm[X_h(\SD{i})][\big]{z_i}^2 .
	\end{align*}
	Combining the last two estimates proves \eqref{dslp:eq:avg_prop3}.
\end{proof}

\section{Discussion on the overlap condition}
\label{dslp:sec:discussion}
In the main result of this work, Theorem~\ref{dslp:thm:ConvergenceOverlapCondition}, we impose a condition on the overlap parameters
\(\ovp,\povp[1]\), and \(\povp[2]\). More precisely, the error bound
\eqref{dslp:eq:AbstractConvergenceTheoremBound} depends on
\[
    \vartheta
    =
    \mathfrak{C}_{\tau,h} \numberSD^{\half}
    \rho^{\ovp}
    \bigl(\gamma^{\povp[1]}+\gamma^{\povp[2]}\bigr),
\]
where \(\numberSD\) is the number of subdomains and the factor \(\mathfrak{C}_{\tau,h} \) is given in \eqref{dslp:eq:factor-mathfrakC} by 
\begin{equation*}
    \mathfrak{C}_{\tau,h} = (\cavg \Cinv)^{\half} \frac{\beta^{\half}}{\alpha^{\half}}\Bigl(  \ts[h] +1 \Bigr) 2 \Cblift h^{-\half} \CLoc,
\end{equation*}
where \(\CLoc\) grows at most quadratically in the ratio \(\tau/h\), but depends neither on \(\tau\) nor on \(h\) individually.
The factor \(\mathfrak{C}_{\tau,h}\) is thus independent of the overlap parameters \(\ovp,\povp[1],\povp[2]\).
The polynomial convergence rate in \eqref{dslp:eq:ConvergenceTheoremBound}
is obtained once \(\vartheta\leq \tau^{p+1}\) for some \(p>0\).

In this section, we discuss how this condition should be understood. The main
point is that it is not an artificial time step restriction, but a natural
resolution requirement on the overlap widths. The prediction overlaps
\(\povp[1]\) and \(\povp[2]\) have to be large enough to capture the physical information
that can reach the artificial interfaces during one time step, while the
subdomain overlap \(\ovp\) provides the required decay to damp the remaining
interface error. This interpretation also motivates the practical parameter
choices used in the numerical experiments in
Section~\ref{dslp:sec:numericalExperiments}.

By choosing \(\povp[1]\geq\povp[2]\) the expression for \(\vartheta\) can be bounded by
\begin{equation*}
    \vartheta 
    \leq
    2
    \mathfrak{C}_{\tau,h}  \numberSD^{1/2}
    \rho^{\ovp}
    \gamma^{\povp[2]} .
\end{equation*}
Here, \(\gamma < 1\) is the localization factor from Lemma~\ref{dslp:lem:TestLocalizedPredictionEstimateNew} and \(\rho <1\) is the decay factor from Lemma~\ref{dslp:lem:BoundLocalDiff}.

The error stays small on a fixed time interval if \(\numberTS\vartheta\ll 1\), i.e., it is sufficient to require
\begin{equation*}
    \rho^{\ovp}\gamma^{\povp[2]}
    \in \mathcal{O}(\frac{\tau^{p+1}}{2\mathfrak{C}_{\tau,h} \numberSD^{\half}}).
\end{equation*}
For \(\alpha\) and \(\beta\) fixed, if \(\tau/h\geq1\), then 
\begin{equation*}
    \gamma^2\simeq (\tau / h)/(1+ \tau / h), 
\end{equation*}
and thus
\(\abs{\log\gamma}\simeq h/\tau\).  Hence, for moderate \(\ovp\), the prediction
overlap has to satisfy
\begin{equation}
    \label{dslp:heuristic:povp2}
        \povp[2]\geq C \frac{\tau}{h}
    \log\left(
            \frac{\beta}{\alpha}\Bigl(\frac{\tau}{h} + 1\Bigr)\frac{\numberSD^{\half}}{\tau h^{\half}}
    \right),
\end{equation}
with a constant \(C>0\) that depends on \(p\) and the additional decay gained from \(\rho^{\ovp}\). 
This means that the additional overhead from the factor \(\mathfrak{C}_{\tau,h}\) only scales logarithmically in \(\numberSD\) and \(h\).
In the numerical experiments reported in the next section, we use an even
simpler practical choice. We do not include the logarithmic factor suggested by
\eqref{dslp:heuristic:povp2}. Instead, for large \(\tau/h\), we choose
\[
    \povp[1]=\povp[2]\simeq \tau/h .
\]
In essence, we bound the logarithmic overhead by a fixed constant. Moreover, when \(\tau\) is comparable to \(h\), we only enforce a small minimal prediction
overlap so that the construction remains meaningful; in the experiments below,
we typically use \(\povp[1]=2\) and \(\povp[2]=1\) in this regime. These choices
are sufficient in our tests to make the domain splitting scheme with localized
prediction well-defined and accurate enough to compete with global implicit methods.
At the same time, the method retains the structural advantages of local
subdomain solves and is therefore well suited for parallelization. The next
section illustrates these observations numerically.

\section{Numerical experiments}
\label{dslp:sec:numericalExperiments}

In this section, we illustrate the behavior of the proposed domain splitting
method with localized implicit prediction (DSLP). The implementation is based
on the FEniCSx framework \cite{dolfinx,dolfinx2}. We compare DSLP with the
domain splitting method from \cite{BucH25CG}, study the influence of the overlap
parameters \(\ovp\), \(\povp[1]\), and \(\povp[2]\), and test the method in
one and two spatial dimensions. We close with a short discussion of the
parallel implementation and first work-precision experiments.

The code corresponding to the experiments in this section is made publicly
available at
\begin{equation*}
    \text{\url{https://github.com/tim-buchholz/dslp-acoustic-wave.git}} \; .
\end{equation*}

Throughout the section, CN denotes the global Crank--Nicolson method, LF the
leapfrog method, and DS the domain splitting scheme from \cite{BucH25CG}. Unless
specified otherwise, the prediction overlaps are chosen according to
\begin{equation}
    \label{dslp:eq:numerics-prediction-overlaps}
    \povp[1]
    =
    \max \monosetc*{m_1, \lceil \eta \kappa \ts h^{-1}\rceil},
    \qquad
    \povp[2]
    =
    \max \monosetc*{m_2, \lceil \eta \kappa \ts h^{-1}\rceil},
\end{equation}
where \(\kappa\) is the maximal characteristic wave speed induced by the material parameters. 
In the homogeneous examples below, \(A\equiv B\equiv 1\), and therefore
\(\kappa\equiv(B^{-1}A)^{1/2}\equiv1\). 
The integer pair \((m_1,m_2)\) gives a minimal prediction width. 
In the experiments, we generally use \((m_1,m_2)=(2,1)\), which keeps
the construction of the prediction domains and cutoff functions well-defined
also for small values of \(\tau h^{-1}\). The parameter \(\eta\) scales the
prediction width in the regime \(\tau \geq c h\) for some \(c>0\); it plays the role of the
logarithmic safety factor suggested by the analysis, but in the experiments
already the choice \(\eta=1\) is often sufficient.

\subsection{Traveling pulse example from \texorpdfstring{\cite{BucH25CG}}{BucH25CG}}

We first revisit the one-dimensional traveling pulse example from
\cite[Section~9.1]{BucH25CG}, originally inspired by \cite{Lin22}. On
\(\Omega=(0,1)\), we consider the homogeneous wave equation with
\(A\equiv B\equiv1\). For \(\xi\in(0,1)\) and \(s>0\), let
\begin{equation}
    \label{dslp:eq:numerics-pulse}
    \mu_{\xi,s}(z)
    =
    \mathbbm{1}_{\{\abs{z-\xi}<s\}}
    \sin\!\left(\mfrac{z-(\xi+s)}{2s}\pi\right)^3 ,
\end{equation}
where \(\mathbbm{1}_S\) denotes the indicator function of the set \(S\).
The initial displacement is chosen as
\begin{equation}
    \label{dslp:eq:numerics-1d-initial-data}
    u^0(x)=\mu(x),
    \qquad
    \mu(z)=\mu_{\xi_1,s}(z)-\mu_{\xi_2,s}(z),
    \qquad
    \xi_1=0.55,\quad \xi_2=0.45,\quad s=0.2,
\end{equation}
and the initial velocity is \(v^0(x)=\restr{\partial_t\mu(x-t)}{t=0}\).
The error is measured at the final time \(T=1\) as a relative
\(H_0^1\times L^2\) error.

This example is useful as a direct comparison with the earlier DS method. Since
that method uses a leapfrog prediction with mass lumping, the direct comparison
is restricted to first-order finite elements with mass lumping. For both
domain splitting schemes we choose the overlap \(\ovp=8\). For DSLP, we use
\eqref{dslp:eq:numerics-prediction-overlaps} with \(\eta=1\) and
\((m_1,m_2)=(2,1)\).

Figure~\ref{dslp:Fig:1DcomparisonFEMml} shows the error with respect to the
exact solution and the difference from the global CN approximation,
both measured in the same relative \(H_0^1\times L^2\) norm. 
The original DS method improves the stability range compared with the fully explicit leapfrog method, but it still exhibits a CFL-type restriction. 
In contrast, the DSLP method remains stable for all tested time steps. 
Its error is essentially indistinguishable from the CN error on the log-log scale. 
The same behavior is visible in the differences from CN: the DS--CN difference is stable only below a CFL-type threshold, whereas the DSLP--CN difference remains bounded throughout the tested range and is smaller than the error of CN with respect to the exact solution.

\begin{figure}[t]
        \centering
        \includegraphics{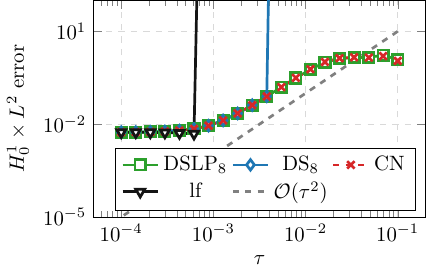}
        \includegraphics{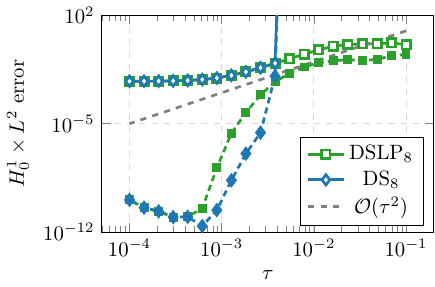}
    \caption{Traveling pulse example from \cite[Section~9.1]{BucH25CG}. First-order finite elements with mass lumping, \(\ovp=8\), \(\eta=1\), and minimal prediction width \((m_1,m_2)=(2,1)\). Left: errors with respect to the exact solution. Right: differences from the global CN approximation depicted by the dashed lines with solid markers.}
    \label{dslp:Fig:1DcomparisonFEMml}
\end{figure}

The localized implicit prediction is not tied to mass lumping or to first-order
finite elements. We therefore repeat the same experiment with standard
conforming finite elements of degrees \(k=1,2,3,4\), again using
\(\ovp=8\), \(\eta=1\), and \((m_1,m_2)=(2,1)\). The results are shown in
Figure~\ref{dslp:Fig:FemDegrees1D}. The DSLP method is robust with respect to
the polynomial degree. No stability deterioration is observed when the degree is
increased. This is in clear contrast to explicit schemes, whose admissible time
step becomes even more restrictive for higher-order finite elements. Moreover,
the DSLP--CN difference tends to decrease for higher polynomial degrees in this
experiment.

\begin{figure}[t]
    \begin{center}
        \includegraphics{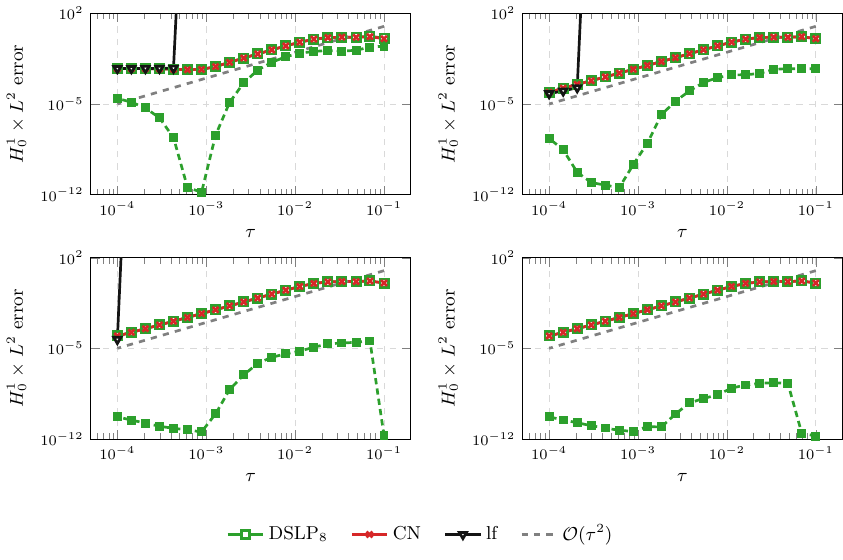}
    \end{center} 
    \caption{Traveling pulse example with standard conforming finite elements of degrees \(k=1,2,3,4\), \(\ovp=8\), \(\eta=1\), and minimal prediction width \((m_1,m_2)=(2,1)\). Dashed lines with solid markers show the difference from the global CN approximation.}
    \label{dslp:Fig:FemDegrees1D}
\end{figure}
 
\subsection{Influence of the overlap parameters}

Next, we investigate how the parameters \(\ovp\), \(\povp[1]\), and~\(\povp[2]\) affect the difference between DSLP and CN. The experiment again
uses the one-dimensional traveling pulse example, now with standard finite
elements of degree \(k=2\) and mesh size \(h=10^{-3}\). Since the previous
figures already show that the total DSLP error is dominated by the global CN
discretization error for the parameter choices used there, we focus here only
on the difference between DSLP and CN. Figure~\ref{dslp:Fig:DifferenceCN-ParameterDependence} reports on three different experiments each investigating the influence of different parameters.

The parameter \(\ovp\) controls the width of the overlap used for the subdomain
CN solves. Increasing \(\ovp\) produces a nearly uniform reduction
of the DSLP--CN difference over all tested time steps. This agrees with the
factor \(\rho^{\ovp}\) in Theorem~\ref{dslp:thm:ConvergenceOverlapCondition}.

The parameter \(\eta\) scales the prediction widths in
\eqref{dslp:eq:numerics-prediction-overlaps}. Its effect is most visible for
\(\tau>h\), where the term \(\lceil\eta\kappa\tau h^{-1}\rceil\) determines
\(\povp[1]\) and \(\povp[2]\). Larger values of \(\eta\) reduce the error made
in the prediction stage and therefore lower the DSLP--CN difference in this
regime. For the present example, \(\eta=1\) already gives a satisfactory
accuracy, but increasing \(\eta\) provides a simple way to further reduce the
prediction error.

Finally, the minimum values \((m_1,m_2)\) primarily influence the regime
\(\tau<h\), where the scaled term
\(\lceil\eta\kappa\tau h^{-1}\rceil\) is small. The baseline choice
\((m_1,m_2)=(2,1)\) is sufficient for the geometric requirements of the
prediction construction, in particular for the cutoff functions used near the
interfaces. Larger minimum widths reduce the DSLP--CN difference for small
time steps. This can be useful in examples where the CN error itself is very
small and one wants the splitting error to remain even smaller.

Overall, the three parameters affect complementary parts of the error. The
subdomain overlap~\(\ovp\) reduces the splitting error globally, the scaling
\(\eta\) controls the prediction accuracy for \(\tau\geq c h\) for some \(c>0\), and the
minimum prediction widths determine the small-\(\tau\) baseline. This makes it
possible to tune the method so that the DSLP--CN difference is below the error
of the global CN method with respect to the exact solution.

\begin{figure}[t]
    \begin{center}
        \includegraphics{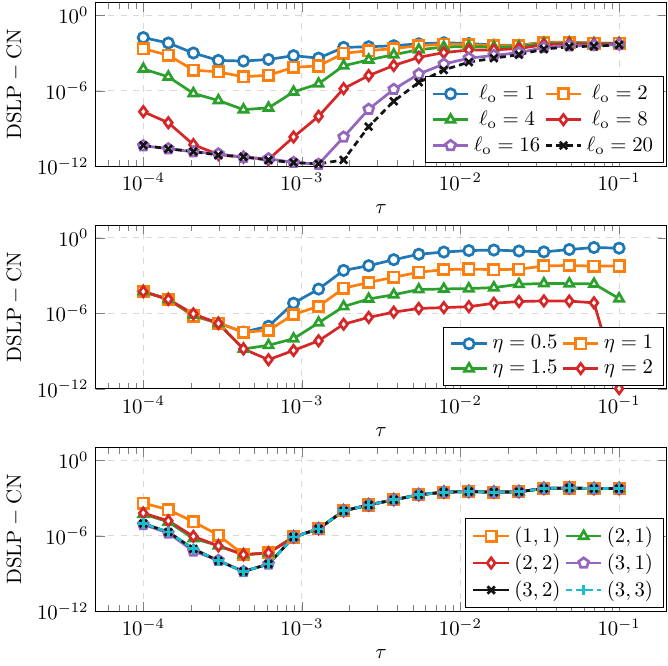}
    \end{center}
    \caption{Parameter study for the traveling pulse example. Shown are differences between DSLP and the global CN approximation for standard finite elements of degree \(k=2\) and \(h=10^{-3}\). Top: sweep over \(\ovp\) with \(\eta=1\) and \((m_1,m_2)=(2,1)\). Middle: sweep over \(\eta\) with \(\ovp=4\) and \((m_1,m_2)=(2,1)\). Bottom: sweep over the minimum prediction width \((m_1,m_2)\) with \(\ovp=4\) and \(\eta=1\).}
    \label{dslp:Fig:DifferenceCN-ParameterDependence}
\end{figure}

\subsection{Two-dimensional example}

We now consider the two-dimensional extension of the traveling pulse example
from \cite[Section~9.2]{BucH25CG}. Let \(\Omega=(0,1)^2\), \(\xi=0.5\), and
\(s=0.2\). With the one-dimensional pulse
\(\mu_{\xi,s}\) from \eqref{dslp:eq:numerics-pulse}, we prescribe the exact
displacement by
\begin{equation*}
    u(x,y,t)
    =
    u_{\mathrm{1D}}(x,t)\mu_{\xi,s}(y)
    +
    u_{\mathrm{1D}}(y,t)\mu_{\xi,s}(x),
\end{equation*}
where \(u_{\mathrm{1D}}\) solves the homogeneous one-dimensional wave equation
on \((0,1)\) with initial displacement \(\mu_{\xi,s}\). The right-hand side
\(f\) is obtained by inserting this expression into the wave operator.

The computational domain is discretized by a triangular mesh with \(h=0.005\).
We use standard conforming finite elements of degree \(k=2\), choose
\(\eta=1\) and \((m_1,m_2)=(2,1)\), and compare the overlap choices
\(\ovp=4\) and \(\ovp=8\). 
The prediction overlap parameters \(\povp[1]\) and \(\povp[2]\) are again chosen according to \eqref{dslp:eq:numerics-prediction-overlaps}. The domain is decomposed into \(16\) subdomains in a
checkerboard configuration, so the decomposition includes cross-points. As in
the one-dimensional case, the error is measured at the final time \(T=1\) as a
relative \(H_0^1\times L^2\) error.

The results in Figure~\ref{dslp:Fig:2Dconvergence} are consistent with the one-dimensional observations.
The leapfrog method suffers from a strict CFL condition. For the tested mesh, it is stable only for time-step sizes at which the error is already dominated by the spatial discretization.
The errors of DSLP with \(\ovp=4\) and
\(\ovp=8\) agree closely with the error of the global CN method,
while no stability restriction is observed for the tested time steps. 
The difference between DSLP and CN remains smaller than the CN error with respect to the exact solution. 
Increasing the overlap from~\(\ovp=4\) to~\(\ovp=8\) further reduces this difference, in agreement with the exponential overlap
damping predicted by the analysis.

These results indicate that the parameter choices established in the
one-dimensional experiments transfer robustly to the two-dimensional setting.
In particular, the method retains the stability behavior of the global implicit
scheme while using only localized implicit predictions and independent
subdomain solves.

\begin{figure}[t]
    \begin{center}
        \includegraphics{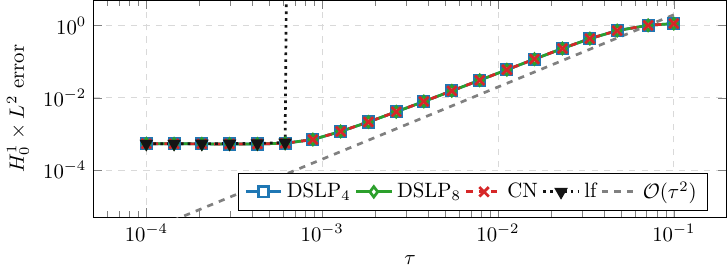}
        \includegraphics{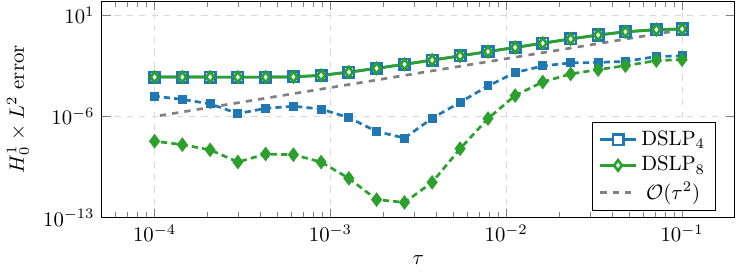}
    \end{center}
     \caption{Two-dimensional traveling pulse example from \cite[Section~9.2]{BucH25CG}. Unit square, triangular mesh with \(h=0.005\), standard finite elements of degree \(k=2\), \(\eta=1\), and \((m_1,m_2)=(2,1)\). Top: errors with respect to the exact solution for \(\ovp=4\) and \(\ovp=8\). Bottom: dashed lines with solid markers show the differences from the global CN approximation.}
     \label{dslp:Fig:2Dconvergence}
\end{figure}

\subsection{Parallel implementation}

We also implemented the method with distributed-memory parallelism. 
For this implementation, meshes are generated with Gmsh \cite{gmsh} and partitioned with PT-Scotch \cite{ptscotch}. 
The local finite element problems are assembled and solved
with FEniCSx \cite{dolfinx,dolfinx2} and PETSc through petsc4py
\cite{petsc4py}. Communication between subdomains is organized explicitly by
point-to-point MPI communication \cite{mpi}. This communication layer exchanges
the data needed on neighboring overlaps and prediction patches.

The execution is split into two phases. In the first phase, the mesh is
generated, partitioned, and the communication pattern between subdomains is set
up. In the second phase, the local finite element spaces, solvers, and data
structures are initialized, followed by the time loop, error evaluation, and
optional plotting. The global CN reference uses the parallel
finite element and linear algebra infrastructure provided by FEniCSx and PETSc.

For the parallel work-precision test, we consider the traveling pulse example
from the previous section; see also \cite[Section~9.2]{BucH25CG}. The global CN
and DSLP computations use a mesh size \(h=2\cdot 10^{-3}\) and finite elements
of degree \(k=2\) (without mass lumping). 
Errors are measured relative to a reference solution computed with CN on a finer mesh with \(h=10^{-3}\), degree \(k=2\), and time step \(\tau=2.5\cdot 10^{-5}\).
All parallel runs use \(64\) MPI ranks, corresponding to \(64\) subdomains.
For DSLP we set \(\ovp=4\), \(\eta=1\), and use the minimal
prediction overlaps \((m_1,m_2)=(2,1)\). The actual prediction overlaps are
chosen separately for each time step according to the heuristic described
in \eqref{dslp:eq:numerics-prediction-overlaps}. The time step is varied logarithmically between \(10^{-2}\) and
\(10^{-4}\), with minor adjustments so that the final time \(T=1\) is reached
exactly. The computations were run on a bwUniCluster node of type
\texttt{cpu\_il}, using one Ice Lake node with \(64\) MPI ranks.

Figure~\ref{dslp:Fig:ParallelWorkPrecision} compares the work-precision
behavior of DSLP with the parallelized implementation of the global CN method.
The left plot reports only the measured wall time spent in the time loop, while the right plot includes the external wall time and therefore also reflects the different setup and meshing costs. 
In the time-loop comparison, DSLP reaches essentially the same accuracy as CN for sufficiently small time steps, but with a reduced wall time per step. 
The speedup compared to the parallelized CN is about \(3\) as visualized by the dashed gray line. 

 \begin{figure}[t]
    \begin{center}
        \includegraphics{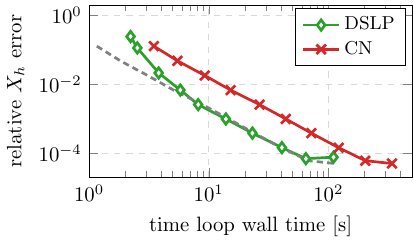}
        \includegraphics{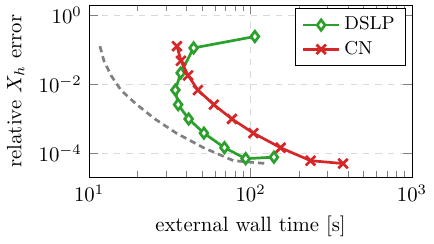}
    \end{center}
     \caption{Work-precision diagram for the two-dimensional traveling pulse example. The plot compares DSLP with the parallel implementation of the global CN for time steps between \(10^{-2}\) and \(10^{-4}\). The dashed line corresponds to a speedup of \(3\) compared to the parallel implementation of the CN scheme.}
     \label{dslp:Fig:ParallelWorkPrecision}
\end{figure}

 \begin{figure}[t]
    \begin{center}
        \includegraphics{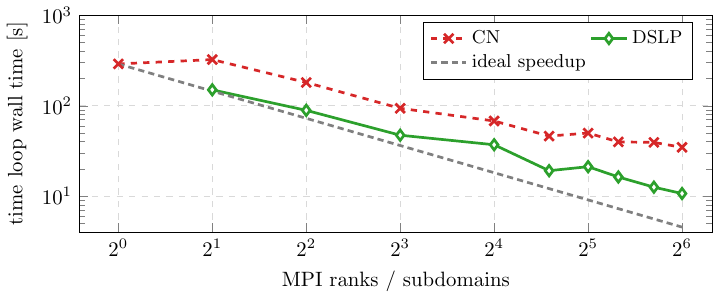}
    \end{center}
     \caption{Strong scaling for the traveling pulse example with fixed
    \(h=2\cdot 10^{-3}\), \(k=2\), \(\tau=10^{-3}\), and \(T=1\). We compare the
    wall time spent in the time loop for the parallel CN method and the DSLP method.
    For DSLP, the number of MPI ranks equals the number of non-overlapping
    subdomains, and we use \(\ovp=4\), \(\eta=1\), and minimal
    prediction overlaps \((m_1,m_2)=(2,1)\). The dashed gray line indicates ideal strong scaling, normalized by the serial CN runtime.}
     \label{dslp:Fig:StrongScaling}
\end{figure}

The structural benefits for parallelization compared to the parallelized implementation of the global CN scheme become even clearer in a strong scaling experiment.
In Figure~\ref{dslp:Fig:StrongScaling} we plot the wall time spent in the actual time loop against the number of MPI ranks used for the simulation. 
In the case of the DSLP method, the number of MPI ranks coincides with the number of subdomains.
At 64 MPI ranks, the DSLP time loop takes \(10.7\) seconds, compared with \(34.6\) seconds for CN, corresponding to a relative speedup of \(3.2\). 

In summary, based on the presented results we can conclude that the proposed domain splitting method with localized implicit prediction attains stability and accuracy comparable to global implicit time integration while providing structural benefits for parallelization.

\appendix

\section*{Acknowledgments}  
We gratefully acknowledge funding by the Deutsche Forschungsgemeinschaft (DFG, German Research Foundation) -- Project-ID 258734477 -- SFB 1173.
The authors acknowledge support by the state of Baden-Württemberg through bwHPC.

\bibliographystyle{amsplain}	
\bibliography{bibliography}

\end{document}